\title{ Overcrowding for zeros of Hyperbolic Gaussian analytic functions }
\theoremstyle{plain}
\newtheorem{theorem}{Theorem}[section]
\newtheorem*{theorem*}{Theorem}
\newtheorem{lemma}[theorem]{Lemma}
\newtheorem{claim}[theorem]{Claim}
\newtheorem*{assumption*}{Assumption}
\newtheorem{corollary}[theorem]{Corollary}
\newtheorem{remark}[theorem]{Remark}
\newcommand{\Var}{\ensuremath{\textrm{Var}}}
\DeclareMathOperator{\Li}{Li}
\definecolor{RED}{rgb}{1,0,0}\definecolor{BLUE}{rgb}{0,0,1} 
\title{Overcrowding for zeros of Hyperbolic Gaussian analytic functions}
\author{Keren Mor Waknin}
\address{School of Mathematical Sciences, Tel Aviv University, Tel Aviv 69978, Israel}
\email{kerenmor22@gmail.com}
\begin{document}
\maketitle

\begin{abstract}
	We consider the family $\{f_L\}_{L>0}$ of Gaussian analytic functions in the unit disk, distinguished by the invariance of their zero set with respect to hyperbolic isometries. Let $n_L\left(r\right)$ be the number of zeros of $f_L$ in a disk of radius $r$.
	We study the asymptotic probability of the rare event where there is an overcrowding of the zeros as $r\uparrow1$, i.e. for every $L>0$, we are looking for the asymptotics of the probability $\mathbb{P}\left[n_L(r)\geq V(r)\right]$ with $V\left(r\right)$ large compared to the $\mathbb{E}\left[n_L\left(r\right)\right]$.
	Peres and Virág showed that for $L=1$ (and only then) the zero set forms a determinantal point process, making many explicit computations possible.
	Curiously, contrary to the much better understood planar model, it appears that for $L<1$ the exponential order of decay of the probability of overcrowding when $V$ is close to $\mathbb{E}\left[n_L\left(r\right)\right]$ is much less than the probability of a deficit of zeros.
\end{abstract}

\section{Introduction}

We study hyperbolic Gaussian analytic functions (GAFs) in the unit disk $\mathbb{D}$
\begin{equation*}
f_L\left(z\right)=\sum_{n\geq0}\xi_n\sqrt{\frac{L\left(L+1\right)\cdots\left(L+n-1\right)}{n!}}z^n, \quad \text{for} \ L>0,
\end{equation*}
where $\{\xi_n\}_{n\geq0}$ are independent standard complex Gaussian random variables.
The zero set of this family has a distinguished property, which is the invariance of the distribution of their zero set $\mathcal{Z}_f=f^{-1}\{0\}$ with respect to M\"{o}bius transformations of the unit disk. This property is unique in the sense that these are the only GAFs, up to a multiplication by a deterministic nowhere vanishing analytic function, whose zeros are isometry-invariant under the respective group of isometries \cite[Chap.~2]{gaf_book}. 
Let $n_L\left(r\right)=\#\{z \ : \ f\left(z\right)=0 \ \text{and} \ \left|z\right|\leq r\}$ be the number of zeros of $f_L$ in the disk of radius $r\in\left(0,1\right)$.
Because of the invariant distribution of the zero set, the first intensity is equal to a constant multiple of the hyperbolic measure on the unit disk. 
In fact, it is not hard to check that 
\begin{equation}\label{expactation}
	\mathbb{E}[n_L(r)]=\frac{Lr^2}{1-r^2}
\end{equation}
see  \cite{Edelman-kostlan}. Buckley \cite{Vardisk} studied the asymptotic of the variance $v_L\left(r\right)$ and proved that there is a transition in the asymptotics of the variance at $L=\frac{1}{2}$.
The case $L=1$ is special in the hyperbolic models.
Peres and Vir\'{a}g \cite{Determinantal} discovered that the zero set of the Gaussian Taylor series $f_1(z)=\sum_{n=0}^{\infty}\xi_nz^n$ is a \emph{determinantal} point process, which allowed them to directly compute a lot of its characteristics (see \cite[Chapter~4]{gaf_book} and also \cite[Section~5.1]{gaf_book}).
Peres and Vir\'{a}g also showed that $L=1$ is the only instance when the zero set of $f_L$ is a determinantal point process.
In \cite{holedisk} Buckley, Nishry, Peled, and Sodin found the rate of decay of the hole probability $ p_H\left(r\right):= \mathbb{P}\left[\{f_L\neq0 \ \text{on} \ r\bar{\mathbb{D}}\}\right]$ as $r\uparrow1$. 
Recently, Buckley and Nishry \cite{CLT-Hole} proved that the limiting (in law) behavior of the normalized version of $n_L\left(r\right)$:
\begin{equation*}
\widehat{n}_L\left(r\right):=\frac{n_L\left(r\right)-\mathbb{E}\left[n_L\left(r\right)\right]}{\sqrt{v_L\left(r\right)}}, \quad r\uparrow 1,
\end{equation*}  has a transition when $L=\frac{1}{2}$. They showed that $n_L\left(r\right)$ satisfies a CLT for $L\geq\frac{1}{2}$ and exhibits non-Gaussian limiting behavior for $L<\frac{1}{2}$. The case $L=1$ was treated previously by Peres and Vir\'{a}g \cite{Determinantal}.
For the hyperbolic Gaussian analytic functions Krishnnapur \cite{overcrowding} found some bounds for the probability of the overcrowding event $\{n_L\left(r\right)\geq m\}$ with $r$ fixed as $m\rightarrow\infty$.
In this work we give matching lower and upper bounds for the overcrowding probability when $r\uparrow1$ (see \Cref{Theorem: overcrowding}). 

	The first result is about large deviations in the case $L=1$.
 \begin{theorem}\label{Theorem: babycase}
	Let $\mu_r,v_1\left(r\right)$ be the expectation and the variance of $n_1\left(r\right)$ respectively. Then, for $r\uparrow1$,
	\begin{equation*}
	-\log\mathbb{P}\left[\left|n_1(r)-\mu_r\right|\geq t\cdot v_1\left(r\right)^{\alpha}\right] \sim c(t)v_1(r)^{2\alpha-1}
	\end{equation*}
	where
	\small{
		\begin{equation*}
		c(t)=
		\begin{cases}
		{\frac{t^2}{2}} & \frac{1}{2}<\alpha<1, \\
		\tfrac{1}{2}\left(t+2\right)^2 + 2\Li_2\left(1 - \exp \left(\frac{t+2}{2} + W_0\left(-\frac{t+2}{2e^{\frac{t+2}{2}}}\right)\right)\right) + 
		\left(t+2\right)W_0\left(-\frac{t+2}{2e^{\frac{t+2}{2}}}\right) & \alpha=1, \\ 
		{\frac{t^2}{4}} & \alpha>1,
		\end{cases}
		\end{equation*}}
	$\Li_2(z)=-\int_{0}^{z}\frac{\log(1-u)}{u}du$ is the dilogarithm function \cite{dilofuction}, and $W_0$ is the principal branch of the Lambert $W$ function \cite{Wfuction}.
	
\end{theorem}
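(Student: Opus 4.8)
The plan is to exploit the determinantal structure available at $L=1$. By the theorem of Peres and Vir\'ag the zero set of $f_1$ is a determinantal point process on $\mathbb{D}$ with the Bergman kernel $K(z,w)=\pi^{-1}(1-z\bar w)^{-2}$ with respect to area measure; restricting $K$ to the rotation-invariant disk $r\mathbb{D}$ diagonalizes it in the monomial basis, the $j$-th eigenvalue being $r^{2j}$ ($j\ge 1$). Hence $n_1(r)$ has the law of $S_r:=\sum_{j\ge1}\eta_j$, where the $\eta_j$ are independent with $\eta_j\sim\mathrm{Bernoulli}(r^{2j})$. In particular $\mu_r=\mathbb{E}[S_r]=\tfrac{r^2}{1-r^2}$ and $v_1(r)=\mathrm{Var}(S_r)=\tfrac{r^2}{1-r^4}$, so $\mu_r=(1+r^2)v_1(r)$ and $\tfrac1{1-r^2}\sim 2v_1(r)$ as $r\uparrow1$; moreover the Laplace transform factorizes, $\mathbb{E}[e^{\theta S_r}]=\prod_{j\ge1}\bigl(1+r^{2j}(e^{\theta}-1)\bigr)=:e^{\Lambda_r(\theta)}$, for every $\theta\in\mathbb{R}$. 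Note also that $\{n_1(r)\le\mu_r-t\,v_1(r)^\alpha\}$ is empty once $\mu_r<t\,v_1(r)^\alpha$, which for $\alpha>1$ (and, at $\alpha=1$, for $t\ge 2$) happens for all $r$ close to $1$; in those cases only the upper tail contributes.

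The problem has thereby become a deviation estimate for a sum of independent, uniformly bounded variables, handled by the Cram\'er--Chernoff method applied separately to the two tails. The exponential Markov inequality gives $-\log\mathbb{P}[S_r\ge m]\ge\sup_{\theta\ge0}\bigl(\theta m-\Lambda_r(\theta)\bigr)$ for integer $m\ge\mu_r$ (and, since $S_r$ is integer-valued, one may always take $m$ integral), and symmetrically with $\theta\le0$ for $\{S_r\le m\}$. For the matching lower bound I would tilt: replace $\eta_j$ by $\mathrm{Bernoulli}(q_j)$ with $q_j=r^{2j}e^{\theta}/(1+r^{2j}(e^{\theta}-1))$, choose $\theta=\theta_r^{*}$ with $\sum_jq_j=m$, and apply a local central limit theorem to the tilted sum --- whose variance is of order $v_1(r)\to\infty$ and whose success probabilities stay bounded away from $0,1$ on an index set of size $\asymp v_1(r)$ --- to recover the single value $S_r=m$ at a polynomial cost. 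In every regime the Chernoff exponent is of order $v_1(r)^{2\alpha-1}\to\infty$, so that this loss is negligible and $-\log\mathbb{P}[S_r\ge m]\sim\sup_{\theta\ge0}\bigl(\theta m-\Lambda_r(\theta)\bigr)$, likewise for the lower tail. It then remains to evaluate this variational problem, whose maximizer $\theta_r^{*}$ behaves differently in the three regimes.

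For $\tfrac12<\alpha<1$ one has $m-\mu_r=t\,v_1(r)^\alpha=o(v_1(r))$ and $\theta_r^{*}\to0$; Taylor expanding $\Lambda_r(\theta)=\mu_r\theta+\tfrac12 v_1(r)\theta^2+O(v_1(r)\theta^3)$ gives the Gaussian moderate-deviation rate $\tfrac{(m-\mu_r)^2}{2v_1(r)}=\tfrac{t^2}{2}v_1(r)^{2\alpha-1}$ for each tail (equivalently, cite the moderate deviation principle for row sums of bounded independent variables). For $\alpha>1$ one has $m\gg\mu_r$, $m\sim t\,v_1(r)^\alpha$, $\theta_r^{*}\to\infty$; here it is cleanest to lower bound directly, $\mathbb{P}[S_r\ge m]\ge\prod_{j\le m}r^{2j}=r^{m(m+1)}$, and to upper bound by the Chernoff estimate with the near-optimal choice $e^{\theta}=r^{-2m}$ (which yields $\mathbb{P}[S_r\ge m]\le 2^m r^{m(m+1)}e^{\mu_r}$), both giving $-\log\mathbb{P}[S_r\ge m]\sim\tfrac{m^2}{2}(1-r^2)\sim\tfrac{m^2}{4v_1(r)}\sim\tfrac{t^2}{4}v_1(r)^{2\alpha-1}$. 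The crux is $\alpha=1$: then $m(1-r^2)\to\beta:=\tfrac{t+2}{2}$ and $\theta_r^{*}$ converges to a genuine positive constant. Using the Riemann-sum (Euler--Maclaurin) limit $(1-r^2)\Lambda_r(\theta)=(1-r^2)\sum_{j\ge1}\log\bigl(1+r^{2j}(e^{\theta}-1)\bigr)\longrightarrow\int_0^1\tfrac{\log(1+u(e^{\theta}-1))}{u}\,du=-\Li_2(1-e^{\theta})$, together with convexity to interchange the limit with the supremum, the exponent becomes $\tfrac1{1-r^2}\sup_{\theta\ge0}\bigl(\beta\theta+\Li_2(1-e^{\theta})\bigr)\sim 2v_1(r)\sup_{\theta\ge0}\bigl(\beta\theta+\Li_2(1-e^{\theta})\bigr)$. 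Since $\tfrac{d}{d\theta}\Li_2(1-e^{\theta})=\tfrac{\theta e^{\theta}}{1-e^{\theta}}$, the critical point obeys $e^{\theta^{*}}(\beta-\theta^{*})=\beta$, whose relevant root is $\theta^{*}=\beta+W_0(-\beta e^{-\beta})$; substituting back and using $W_0(-\beta e^{-\beta})=\theta^{*}-\beta$ collapses $2\bigl(\beta\theta^{*}+\Li_2(1-e^{\theta^{*}})\bigr)$ to exactly the stated closed form of $c(t)$. The same computation on the lower tail produces the analogous rate with the branch $W_{-1}$ in place of $W_0$; one checks that this lower-tail rate exceeds the upper-tail one for all admissible $t$ (they agree to order $t^2$ and differ at order $t^3$), so the two-sided probability is governed by overcrowding, and $c(t)$ is the $W_0$ expression.

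The single genuinely delicate point is the continuum limit $(1-r^2)\Lambda_r(\theta)\to-\Li_2(1-e^{\theta})$ at $\alpha=1$: it must hold with enough uniformity --- in $r\uparrow1$ and locally uniformly in $\theta$ near $\theta^{*}$ --- to legitimately interchange $\lim_{r\to1}$ with $\sup_{\theta}$ and to control the error against the diverging prefactor $1/(1-r^2)$. Convexity of all functions in play and the nondegeneracy of the maximizer make this soft but unavoidable. The other technical input, the local central limit theorem for the tilted Bernoulli sums used in the lower bounds, is routine once the growth of the tilted variances is verified.
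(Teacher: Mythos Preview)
Your proposal is correct and follows essentially the same route as the paper: both start from the Peres--Vir\'ag representation $n_1(r)\overset{d}{=}\sum_{j\ge1}\mathrm{Bernoulli}(r^{2j})$, compute the cumulant generating function, recognize the Riemann-sum limit $(1-r^2)\sum_j\log(1+r^{2j}(e^\theta-1))\to-\Li_2(1-e^\theta)$, and solve the resulting variational problem via the Lambert $W$ function. The only difference is packaging: the paper normalizes to $Z_r=v_1(r)^{-\alpha}(n_1(r)-\mu_r)$, computes the limiting scaled log-MGF $\Lambda(\lambda)$ in all three regimes, and then invokes the G\"artner--Ellis theorem as a black box to obtain the LDP with rate $\Lambda^*$; you instead carry out Cram\'er--Chernoff by hand (exponential Markov for the upper bound, tilting plus a local CLT for the lower bound), and for $\alpha>1$ you replace even this by the more elementary direct estimate $\prod_{j\le m}r^{2j}\le\mathbb{P}[S_r\ge m]\le 2^m r^{m(m+1)}e^{\mu_r}$. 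Your hands-on approach makes the mechanism more transparent and avoids verifying the G\"artner--Ellis hypotheses, at the cost of having to justify the tilted LCLT and the interchange of limit and supremum yourself --- exactly the two points you flagged as delicate. The paper's approach hides these inside the cited theorem.
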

We prove \Cref{Theorem: babycase} in \Cref{proofbabycase}.
Our second result is the asymptotics of the overcrowding probability for general $L>0$. 
{
	\begin{theorem}
		\label{Theorem: overcrowding}
		Fix $L>0$. Then there exists $C_L>0$ such that for all $r\in\left[\tfrac{1}{2},1\right)$ and all $V>1$ satisfying  
		\begin{equation*}
		V\geq\frac{C_L}{1-r}\log \frac{1}{1-r},
		\end{equation*}   
		we have
		\begin{equation*}
		-\log\mathbb{P}\left[n_L(r)\geq V\right]\simeq_L\left(1-r\right)V^2.
		\end{equation*}
	\end{theorem}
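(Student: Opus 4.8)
The statement is a two-sided estimate $-\log\mathbb{P}[n_L(r)\geq V]\simeq_L (1-r)V^2$ under the constraint $V\gtrsim_L \frac{1}{1-r}\log\frac{1}{1-r}$. I need both an upper bound for the probability (lower bound for $-\log\mathbb{P}$) and a lower bound for the probability.

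For the lower bound on probability: I want to exhibit an event, implied by $\{n_L(r)\geq V\}$... no wait, I want an event that IMPLIES $\{n_L(r)\geq V\}$ and has probability $\geq e^{-C_L(1-r)V^2}$. The natural approach: force the Taylor coefficients of $f_L$ to be small so that... no. To create many zeros, I should make $f_L$ behave like a polynomial of degree $\geq V$ with all roots inside radius $r$. A standard trick (as in Sodin-Tsirelson, or the hole probability papers): condition on $\xi_0, \dots, \xi_{V-1}$ taking specific values (making the first $V$ terms look like $c \prod (z - a_j)$ with $|a_j|\leq r$) and on the tail $\sum_{n\geq V}$ being small. The "cost" is Gaussian: forcing $\xi_n \approx b_n$ costs $\sim \sum |b_n|^2$, and forcing the tail small costs a constant. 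So I'd need to understand the size of the coefficients $\sqrt{\frac{L(L+1)\cdots(L+n-1)}{n!}}$, which grow like $n^{L-1}/\Gamma(L)$ roughly (so the radius of convergence is 1). The polynomial with $V$ zeros clustered near a point at hyperbolic-appropriate scale: choosing the zeros to be at radius $\sim r$ spread out, the coefficient $b_n$ of $z^n$ in the normalized polynomial will be of size... and dividing by the GAF coefficient $c_n$, I need $\sum_{n=0}^{V-1} |b_n/c_n|^2 \lesssim (1-r)V^2$. Actually I think the cleaner route for the lower bound is: **choose zeros to all coincide at the origin**, i.e., make $f_L$ look like $z^V \cdot(\text{nonvanishing})$. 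Then I just need $\xi_0 = \dots = \xi_{V-1} = 0$ (cost zero! since Gaussian density at 0 is positive — but that's a measure-zero event). So instead: $|\xi_n|\leq \delta$ for $n<V$ and the tail $\sum_{n\geq V}\xi_n c_n z^n$ dominates $\delta \sum_{n<V} c_n z^n$ on $|z|=r$ — but this won't give $V$ zeros near origin unless I'm careful; by Rouché, if the "main part" $\sum_{n\geq V} \xi_n c_n z^n$ has exactly, say, $V$ zeros in $r\mathbb{D}$ and dominates the perturbation on $|z|=r$, then $f_L$ has $\geq V$ zeros. Hmm, but the main part typically has $\approx \mathbb{E} n_L(r) \approx L r^2/(1-r)$ zeros, which is way less than $V$. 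So I DO need to engineer the coefficients $\xi_V, \dots, \xi_{V + k}$ for some range to place zeros.

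Let me reconsider. Placing $V$ zeros at radius $\rho < r$: take the target $g(z) = \xi_{\text{big}} \cdot z^{V}\cdot(\text{unit})$ won't work for counting. Better: I want $f_L \approx P(z) := \prod_{j=1}^V (1 - z/\zeta_j)$ for points $\zeta_j$ at radius slightly less than $r$, times a normalization. Expand $P(z) = \sum_{n=0}^V p_n z^n$. For $f_L$ to be close to a scalar multiple $\lambda P$, I force $\xi_n c_n = \lambda p_n$ for $n \leq V$ and $|\xi_n c_n|$ small for $n > V$, on $|z| = r$. The Gaussian cost is $\sum_{n\le V}|\lambda p_n/c_n|^2 + O(1)$ (tail). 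I optimize over $\lambda$ and over the configuration $\{\zeta_j\}$. The claim is that the optimal cost is $\asymp_L (1-r)V^2$. This is the heart of the construction. I expect the optimal configuration spreads the zeros on a circle of radius $\approx r$ (or $\approx 1 - c/V$ type scale) and the computation of $\sum |p_n/c_n|^2$ reduces to estimating $\int |P(re^{i\theta})|^2 \,d\theta$-type quantities weighted by $c_n^{-2} \sim n^{1-2L}$. The condition $V\gtrsim_L \frac{1}{1-r}\log\frac{1}{1-r}$ should be exactly what's needed for the "deterministic polynomial" part to dominate the "random fluctuation cost" — i.e., when $V$ is large enough relative to the mean $\asymp (1-r)^{-1}$, the cost of *creating* zeros beats the cost of other competing strategies.

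**The upper bound on probability.** Here I'd use Jensen's formula / the argument principle: $n_L(r) \leq \frac{1}{\log(R/r)}\int_{|z|=R} \log|f_L(z)|\, \frac{|dz|}{2\pi} - \frac{1}{\log(R/r)}\log|f_L(0)|$ for any $R \in (r,1)$, or more precisely $n_L(r)\log\frac{R}{r} \le \frac{1}{2\pi}\int_0^{2\pi}\log|f_L(Re^{i\theta})|\,d\theta - \log|f_L(0)|$. So $\{n_L(r)\geq V\}$ forces $\max_{|z|=R}\log|f_L(z)|$ to be large, roughly $\geq V\log\frac{R}{r} + \log|f_L(0)|$. Now $M(R) := \max_{|z|=R}|f_L(z)|$ is controlled by the Gaussian coefficients: $M(R)\le \sum |\xi_n| c_n R^n$, and $\mathbb{E}[\log M(R)]$ as well as large-deviation estimates for $\log M(R)$ are available (this is the standard "$\mathbb{P}[M(R)\ge e^t]$ is small" estimate for GAFs). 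Choosing $R = 1 - \frac{c}{V}$ so that $\log\frac{R}{r}\approx (R - r)\approx$ the relevant scale: if $r = 1-\epsilon$ and $R = 1 - \epsilon/2$, then $\log(R/r)\approx \epsilon/2$, so I need $\log M(R) \gtrsim \epsilon V$ while typically $\log M(R) \approx$ something like $L\log\frac{1}{1-R}\approx L\log\frac{1}{\epsilon}$ — hugely smaller when $\epsilon V \gg \log(1/\epsilon)$, which is again precisely the hypothesis. The probability of $\log M(R) \ge \epsilon V /2$ is then $\exp(-c(\epsilon V)^2/\text{Var})$-type, and $\mathrm{Var}(\log M(R))$ or the relevant quantity scales so that this becomes $\exp(-c_L (1-r) V^2)$. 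I need to also control the lower tail of $\log|f_L(0)| = \log|\xi_0|$ — that's just a sub-exponential nuisance, $\mathbb{P}[|\xi_0| < e^{-s}]\le e^{-s}$-ish, negligible compared to $e^{-(1-r)V^2}$ given the constraint on $V$.

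**Main obstacle.** The delicate part is the lower bound construction — choosing the configuration of prescribed zeros and computing/optimizing $\inf_{\lambda, \{\zeta_j\}} \sum_{n=0}^{V}|\lambda p_n / c_n|^2$ to show it is $\asymp_L (1-r)V^2$ (both the $O$ and the $\Omega$). The $\Omega$ (lower bound on the cost, hence this *particular strategy* can't do better) is easy-ish; the real work is the $O$: finding an explicit near-optimal configuration and estimating the weighted $\ell^2$ norm of its coefficients, handling the interplay between the scale $1-r$ of the disk and the number $V$ of zeros, and making sure the Rouché-type domination on $|z|=r$ holds with the tail contributing only $O(1)$ to the cost. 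The dependence of constants on $L$ enters through $c_n^2 = \frac{L(L+1)\cdots(L+n-1)}{n!}\sim \frac{n^{L-1}}{\Gamma(L)}$, which changes the weights $n^{1-2L}$ but not the order in $V$ and $1-r$; keeping track that $C_L$ can be chosen uniformly for $r\in[1/2,1)$ is a bookkeeping point. I would also double-check that for the upper bound, the Jensen/Paley–Wiener estimate on $\log M(R)$ has the right variance normalization — this is where a factor could go wrong and turn $(1-r)V^2$ into $(1-r)^2 V^2$ or similar.
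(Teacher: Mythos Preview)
Your upper bound has a genuine gap. From Jensen you correctly get
\[
n_L(r)\log\tfrac{R}{r}\le \log M_f(R)-\log|f_L(0)|,
\]
and with $R=1-\tfrac12(1-r)$ this forces either $\log M_f(R)\gtrsim (1-r)V$ or $\log|\xi_0|\lesssim -(1-r)V$. The first event indeed has probability far below $e^{-c_L(1-r)V^2}$. But the second is \emph{not} a ``sub-exponential nuisance'': since $|\xi_0|^2$ is exponential, $\mathbb{P}[|\xi_0|<e^{-s}]\asymp e^{-2s}$, so $\mathbb{P}[\log|\xi_0|\le -c(1-r)V]\asymp e^{-c'(1-r)V}$, which is \emph{much larger} than the target $e^{-c_L(1-r)V^2}$ (you have the inequality the wrong way round). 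Using a single point $z=0$ can never give the $V^2$ in the exponent. The paper fixes this by replacing $\log|f(0)|$ with $\int_{r\mathbb{T}}\log|f|\,d\mu$, so that on the overcrowding event one gets $\int_{r\mathbb{T}}\log|f|\,d\mu\le -d_L(1-r)V$; this integral is then discretised as an average over $N\asymp V$ equally spaced points on a slightly smaller circle, and the probability that $\tfrac1N\sum_j\log|f(r_0e^{i\theta_j})|\le -d_L(1-r)V$ is bounded by Chebyshev applied to $\mathbb{E}\prod_j|f(r_0e^{i\theta_j})|^{-\zeta}$, using determinant and top-eigenvalue estimates for the $N\times N$ covariance matrix. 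Having $N\asymp V$ points is exactly what produces the extra factor of $V$ in the exponent.

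For the lower bound you actually had the right idea and then talked yourself out of it. Rouch\'e should compare $f_L$ with the \emph{single term} $\xi_m c_m z^m$ (where $m=[V]+1$), not with the whole tail $\sum_{n\ge V}$: if $|\xi_m c_m z^m|>|f_L(z)-\xi_m c_m z^m|$ on $|z|=r$, then $f_L$ has exactly $m$ zeros in $r\mathbb{D}$. This is precisely the paper's construction: force $|\xi_m|\ge 4m\sqrt{1-r}$, force $|\xi_n|$ small for $n<m$ and for $m<n\le 2m$ in a calibrated way, and let the far tail be typical. The log-cost of each block is $\lesssim_L (1-r)V^2$, with the dominant contribution coming from the events $|\xi_n|\lesssim \sqrt{1-r}\,r^{m-n}$ for $n<m$ (product of $\sim V$ factors each of size $\asymp (1-r)r^{2(m-n)}$). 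Your alternative plan of prescribing $V$ zeros on a circle near $|z|=r$ and estimating $\sum|\lambda p_n/c_n|^2$ is more work and not needed here.
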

	\begin{corollary}
		For all $L>0$ and $\alpha>1$ we get as $r\uparrow1$,
		\begin{equation*}
		-\log\mathbb{P}\left[n_L(r)\geq v_L\left(r\right)^{\alpha}\right]\simeq_{L,\alpha}\begin{cases}
		\left(\frac{1}{1-r}\right)^{2\alpha-1} & L>\frac{1}{2},\\
		\left(\frac{1}{1-r}\right)^{2\alpha-1}\log^{2\alpha}\left(\frac{1}{1-r}\right) & L=\frac{1}{2}, \\
		\left(\frac{1}{1-r}\right)^{2\alpha(2-2L)-1} & 0<L<\frac{1}{2}.
		\end{cases}
		\end{equation*}	
	\end{corollary}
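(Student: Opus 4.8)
The plan is to deduce the corollary directly from \Cref{Theorem: overcrowding} by substituting $V=v_L(r)^{\alpha}$ and inserting the known asymptotics of the variance $v_L(r)$. Recall from Buckley \cite{Vardisk} that $v_L(r)$ undergoes a phase transition at $L=\tfrac12$: as $r\uparrow1$,
\begin{equation*}
v_L(r)\ \simeq_L\
\begin{cases}
\dfrac{1}{1-r}, & L>\tfrac12,\\[1.4ex]
\dfrac{1}{1-r}\log\dfrac{1}{1-r}, & L=\tfrac12,\\[1.4ex]
\left(\dfrac{1}{1-r}\right)^{2-2L}, & 0<L<\tfrac12.
\end{cases}
\end{equation*}
In every regime $v_L(r)\to\infty$, so $V:=v_L(r)^{\alpha}>1$ once $r$ is close enough to $1$.

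First I would check that the hypothesis of \Cref{Theorem: overcrowding}, namely $V\geq\frac{C_L}{1-r}\log\frac{1}{1-r}$, holds for all $r$ sufficiently close to $1$. For $L>\tfrac12$ this reduces, after dividing by $\frac{1}{1-r}$, to $(1-r)^{-(\alpha-1)}\gtrsim_L\log\frac{1}{1-r}$, which is true near $r=1$ since $\alpha>1$ makes the left-hand side a positive power of $\frac{1}{1-r}$. For $0<L<\tfrac12$ the same computation applies with the exponent $\alpha$ replaced by $(2-2L)\alpha>\alpha>1$. For $L=\tfrac12$, writing $B:=\frac{1}{1-r}\log\frac{1}{1-r}$, we have $V\simeq B^{\alpha}$ with $B\to\infty$, so $V\geq C_{1/2}\,B$ for $r$ near $1$ because $\alpha>1$. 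In all three cases $V>1$ as well, and $r\in[\tfrac12,1)$ automatically holds as $r\uparrow1$.

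With the hypothesis verified, \Cref{Theorem: overcrowding} gives, as $r\uparrow1$,
\begin{equation*}
-\log\mathbb{P}\left[n_L(r)\geq v_L(r)^{\alpha}\right]\ \simeq_L\ (1-r)\,v_L(r)^{2\alpha},
\end{equation*}
and it only remains to substitute the three variance asymptotics and simplify. For $L>\tfrac12$ this is $(1-r)\cdot(1-r)^{-2\alpha}=\left(\frac{1}{1-r}\right)^{2\alpha-1}$; for $L=\tfrac12$ it is $(1-r)\cdot(1-r)^{-2\alpha}\log^{2\alpha}\frac{1}{1-r}=\left(\frac{1}{1-r}\right)^{2\alpha-1}\log^{2\alpha}\frac{1}{1-r}$; and for $0<L<\tfrac12$ it is $(1-r)\cdot(1-r)^{-2\alpha(2-2L)}=\left(\frac{1}{1-r}\right)^{2\alpha(2-2L)-1}$, which are exactly the three stated cases. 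Because the variance is known only up to $L$-dependent constants and is raised to the power $2\alpha$, the implied constants in the final comparison depend on both $L$ and $\alpha$, so the relation is $\simeq_{L,\alpha}$.

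I do not expect a genuine obstacle in this deduction: the whole content is that \Cref{Theorem: overcrowding} is uniform in $V$ and the variance asymptotics of \cite{Vardisk} can be plugged in directly. The only points requiring a little care are bookkeeping ones — recording Buckley's asymptotics correctly and confirming that the sufficient condition $V\geq\frac{C_L}{1-r}\log\frac{1}{1-r}$ holds throughout the relevant range of $r$ — and the mildly delicate borderline $L=\tfrac12$, where the logarithm in the variance survives into the final answer but, since $\alpha>1$ is strict, never competes with the leading power of $\frac{1}{1-r}$.
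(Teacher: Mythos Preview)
Your proposal is correct and follows exactly the route the paper intends: the Corollary is stated without a separate proof, being an immediate substitution of $V=v_L(r)^\alpha$ into \Cref{Theorem: overcrowding} together with Buckley's variance asymptotics, and the verification that $v_L(r)^\alpha$ satisfies assumption \eqref{assumption} for $\alpha>1$ is precisely what the paper records in its subsequent remarks on \Cref{Theorem: overcrowding}.
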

\newpage
\subsection{Related works}

\subsubsection{Comparison with the Gaussian entire function (GEF)}
An analogue process to $f_L$ whose zero set distribution is invariant under isometries of the plane is given by 	\begin{equation*}
g_L\left(z\right)=\sum_{n\geq0}\xi_n\sqrt{\frac{L^n}{n!}}z^n.
\end{equation*} with $L>0$. Note that it is enough to consider the case $L=1$ since other values of $L$ are just scaled versions of this case (unlike the hyperbolic model).
The mean number of zeros is equal to $\frac{L}{\pi}$, with respect to the Lebesgue measure $dm\left(z\right)$ on the plane. The asymptotics of the variance in the planar $v\left(R\right)$ was computed by Forrester and Honner in \cite{Varplane}: \begin{equation*}
v\left(R\right)=cR+o\left(R\right), \quad  R\rightarrow\infty.
\end{equation*} 
We use upper case $R$ for the radius to emphasize that the function's domain is $\mathbb{C}$.
In \cite{Vardisk}, Buckley proved that for the hyperbolic model the asymptotic of the variance is given by:
\begin{equation*}
v_L\left(r\right)\sim 
\begin{cases}
\frac{c_L}{1-r}  & \text{for }\  L>\frac{1}{2},\\
\frac{c}{1-r}\log\frac{1}{1-r}  & \text{for} \ L=\frac{1}{2},\\
\frac{c_L}{(1-r)^{2-2L}}  & \text{for } 0<L<\frac{1}{2},
\end{cases} r\rightarrow1^{-},
\end{equation*}
where the constants are explicitly computed and positive in all three cases.
Note that in the planar case, the expectation $\mathbb{E}\left[n\left(R\right)\right]$ is in fact the Lebesgue measure of the disc with radius $R$, which is much larger (by factor of $R$) than the variance $v\left(R\right)$, which is proportional to the length of the boundary of the disc. However, in the hyperbolic case, the hyperbolic area and the length circumference of a disk of radius $r$ are approximately the same and contrary to the plane, the variance is greater or equal (up to constants) than the expectation \eqref{expactation} (depending on the values of $L$). 
It is also quite natural to study the probability of (large) fluctuations. We will restrict ourselves to the case of a disk with large radius $r$ with respect to the function's domain (i.e in the unit disk $r$ is close to 1 and in the complex plane $R$ tends to infinity) and study the probability:
\begin{equation*}
\mathbb{P}\left[\left|n_L\left(r\right)-\mathbb{E}\left[n_L\left(r\right)\right]\right|\geq  v_L\left(r\right)^{\alpha}\right],
\end{equation*}
beyond what is known by asymptotic normality (i.e. $\alpha>\frac{1}{2}$). This might be called moderate$\backslash$large$\backslash$huge deviations in probability language, depending on $\alpha$ and the model we consider.
For the GEF there is a complete answer in terms of $\alpha$ given by:

\begin{theorem}[\emph{Jancovici-Lebowitz-Manificat Law}
	 {\cite{jlm,overcrowding,holeplane}}]
	As $R\rightarrow\infty$
	\begin{equation*}
	\frac{\log\left(-\log\left(\mathbb{P}\left[\left|n\left(R\right)-R^2\right|>R^{\alpha}\right]\right)\right)}{\log R}\rightarrow\begin{cases}
	2\alpha-1 &  \frac{1}{2}\leq\alpha\leq 1,\\
	3\alpha-2 &  1\leq\alpha\leq 2,\\
	2\alpha & \alpha\geq 2.
	\end{cases}
	\end{equation*}
\end{theorem}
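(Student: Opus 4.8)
The Jancovici--Lebowitz--Manificat law is classical; to reprove it, the plan is to establish the matching bounds $-\log\mathbb{P}[\,|n(R)-R^2|>R^{\alpha}\,]\asymp R^{E(\alpha)}$ with $E(\alpha)=2\alpha-1$ for $\tfrac12\le\alpha\le1$, $E(\alpha)=3\alpha-2$ for $1\le\alpha\le2$, and $E(\alpha)=2\alpha$ for $\alpha\ge2$; the outer logarithm in the statement then absorbs the subpolynomial ($\log R$) corrections. As usual, lower bounds on the probability will come from explicit atypical configurations of the Taylor coefficients $\xi_k/\sqrt{k!}$, and upper bounds from Jensen's formula combined with Gaussian concentration.

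\emph{Lower bounds on the probability.} Put $m:=R^{\alpha}$. In the extreme regime $\alpha\ge2$ — where the event is just $\{n(R)\ge R^2+m\}$, a deficit of size $>R^2$ being impossible — I would force $|\xi_k|<\varepsilon_k$ for $k<m$ with $\varepsilon_k\asymp(2R)^{m-k}/\sqrt{(m-k)!}$; on this event the polynomial part $\sum_{k<m}\xi_kz^k/\sqrt{k!}$ is, on $|z|=R$ and with probability bounded below, dominated by the $z^m$ term, so Rouch\'e forces $n(R)\ge m$ at cost $\prod_{k<m}\mathbb{P}[|\xi_k|<\varepsilon_k]\asymp\prod_{k<m}\varepsilon_k^{2}$, whose logarithm is $\asymp-m^{2}\log m\asymp-R^{2\alpha}\log R$. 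In the intermediate regime $1\le\alpha\le2$ I would set $w:=m/R=R^{\alpha-1}\in[1,R]$ and tile the boundary annulus $\{R-w<|z|<R\}$, which carries $\asymp Rw=m$ zeros in the mean, by $\asymp R/w$ Euclidean squares of side $\asymp w$, imposing a hole (for the deficit) or a twofold overcrowding (for the excess) in each; since $g_1$ is approximately translation invariant with rapidly decaying correlations at unit scale, these $\asymp R/w$ events are asymptotically independent and each costs $\asymp w^{4}$ (by \cite{holeplane}, resp.\ \cite{overcrowding}, rescaled to radius $w$), for a total cost $\asymp(R/w)\,w^{4}=Rw^{3}=m^{3}/R^{2}\asymp R^{3\alpha-2}$ (up to a $\log R$ factor on the overcrowding side). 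In the moderate regime $\tfrac12\le\alpha\le1$ I would, at cost $\asymp R^{2\alpha-1}=m^{2}/R$, impose a small coordinated Gaussian tilt of the field near $|z|=R$ shifting $n(R)$ by $m$; this is the Gaussian moderate-deviation cost for a shift of $m$ against the variance $v(R)\asymp R$ \cite{Varplane}.

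\emph{Upper bounds on the probability.} By Jensen's formula, $n(R)\le\tfrac{1}{\log2}\bigl(\mathsf u(2R)-\mathsf u(R)\bigr)$ and $n(R)\ge\tfrac{1}{\log2}\bigl(\mathsf u(R)-\mathsf u(R/2)\bigr)$, where $\mathsf u(\rho):=\tfrac{1}{2\pi}\int_0^{2\pi}\log|g_1(\rho e^{i\theta})|\,d\theta$ has mean $\tfrac{\rho^{2}}{2}+O(1)$. In the moderate regime I would control the deviations of the centred functional $\mathsf u(2R)-\mathsf u(R)$ — of variance $\asymp R$ and sub-Gaussian at moderate scales — via Offord-type exponential-moment bounds and the Borell--TIS inequality applied to $\theta\mapsto\log|g_1(\rho e^{i\theta})|$ after truncating its logarithmic singularities, obtaining $-\log\mathbb{P}\gtrsim R^{2\alpha-1}$. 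In the extreme regime, $n(R)\ge R^2+m$ forces $\mathsf u(2R)\ge m+\log|\xi_0|+O(R^{2})$, but the crude estimate via $\mathsf u(2R)\le\log\max_{|z|=2R}|g_1|$ is too lossy (it yields only $\gtrsim m$), so one localizes as in the construction — either a long initial block of coefficients is small, or some $\xi_k$ with $k\asymp m$ is anomalously large — and optimizing over the location of the defect recovers $-\log\mathbb{P}\gtrsim m^{2}\log m\asymp R^{2\alpha}\log R$. In the intermediate regime a deficit (resp.\ excess) of $m$ zeros in $\mathbb D_R$ forces the zero counting measure to be suppressed (resp.\ inflated) over a region of area $\gtrsim m$; Jensen's formula applied at scale $w\asymp m/R$ over a tiling of the extremal boundary annulus, fed into the unit-scale hole/overcrowding estimates, should then give the matching $-\log\mathbb{P}\gtrsim m^{3}/R^{2}\asymp R^{3\alpha-2}$.

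The main obstacle is the sharp upper bound in the intermediate regime: one must genuinely decouple the global count $n(\mathbb D_R)$ into $\asymp R/w$ nearly-independent local events over the tiling, and show that the thin boundary annulus is the \emph{optimal} shape for the defect — that no configuration achieving the deviation is cheaper than $\asymp m^{3}/R^{2}$ — which needs a potential-theoretic lower bound on the cost valid over all configurations, not just the crude estimate via Jensen's formula and $\max_{|z|=2R}|g_1|$. A secondary difficulty, compared with $f_L$, is that $g_1$ carries only approximate translation invariance at unit scale, so the hole and overcrowding constants must be transported across the annulus with care.
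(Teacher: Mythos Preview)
The paper does not prove this theorem. It is quoted in the related-works subsection (Section~1.1.1) as a known result for the planar GEF, with attribution to \cite{jlm,overcrowding,holeplane}, purely for comparison with the hyperbolic case; the paper's own contributions (Theorems~1.1 and~1.2) concern $f_L$ on $\mathbb{D}$, not $g_1$ on $\mathbb{C}$. So there is no ``paper's own proof'' to compare your proposal against.

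That said, your sketch is broadly faithful to the actual arguments in the cited sources: the moderate regime $\tfrac12\le\alpha\le1$ is handled in \cite{jlm} via exponential-moment/concentration bounds for smoothed linear statistics; the extreme regime $\alpha\ge2$ via Rouch\'e-type coefficient forcing for the lower bound and Cauchy/Jensen estimates for the upper bound (as in \cite{overcrowding,holeplane}); and the intermediate regime $1\le\alpha\le2$ via a boundary-annulus decomposition into nearly independent cells. You are right that the intermediate upper bound is the delicate part: in \cite{jlm} it is not obtained by a direct ``tiling plus local hole/overcrowding'' argument as you describe, but rather by controlling the deviations of the random potential $\log|g_1|$ (more precisely, of suitably smoothed increments of $n(R)$) and showing that a global count deviation of size $R^\alpha$ forces a potential deviation whose Gaussian cost is at least $R^{3\alpha-2}$. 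Your tiling heuristic gives the correct exponent for the lower bound but, as you note yourself, does not by itself rule out cheaper configurations for the upper bound; the potential-theoretic route in \cite{jlm} is what closes that gap.
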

		Here we can see that the asymptotic behavior changes for different values of the exponent $\alpha$, unlike the case of the hyperbolic GAF \Cref{Theorem: babycase}.
		i.e. considering the $\log\log$ asymptotic of this probability as $r\uparrow1$, it holds that the power remains $2\alpha-1$. 

\subsubsection{Determinantal point processes in the unit disk}

		Our result for the determinantal process in the hyperbolic model is related to recent work of Fenzl and Lambert (see \cite{Determinan}), where they compute the precise deviations for disk counting statistic of rotation invariant determinantal point processes in the disk and the plane. In particular they proved that the large deviations principle holds for the determinantal point process ($L=1$) and provided a general formula for the rate of decay in terms of the rate function. Here we provide an explicit expression to the rate function. Since the only case when the hyperbolic GAF form a determinantal point process is when $L=1$, we need to use other techniques for different values of $L$.

\subsubsection{Overcrowding estimates.}
		In \cite{overcrowding} Krishnapur studied the overcrowding probability for zeros of the hyperbolic GAF. Our lower bound for $\mathbb{P}\left[n_L\left(r\right)\geq V\right]$ is close to his. Our upper bound is more precise, mainly because we found a more accurate estimate of the asymptotic probability  
		\begin{equation*}
		\mathbb{P}\left[\int_{r\mathbb{T}}\log|f(re^{i\theta}	)|d\mu\leq -x \right], \qquad \text{as} \ \, x\to \infty.
		\end{equation*}
		Krishnapur conjectured that his lower bound gives the correct order of decay of the overcrowding probability, and our \Cref{Theorem: overcrowding} confirms his conjecture. 
\subsubsection{Hole probability}
Consider the event when $f\left(z\right)\neq0$ in $r\mathbb{D}$, sometimes called the \emph{hole event}.  
We denote $ p_H\left(r\right):= \mathbb{P}\left[\{f_L\neq0 \ \text{on} \ r\bar{\mathbb{D}}\}\right]$ and are interested in its decay as $R\rightarrow\infty$ or $r\rightarrow1^{-}$. 
There are results regarding the rate of decay of the hole probability in both models - the planar and the hyperbolic: 
\begin{theorem}[{\cite{Hole}},\cite{Hole gosh},\cite{holeplane}]
	Suppose $R\rightarrow\infty$. Then, 
	\begin{equation*}
	-\log p_H\left(R\right)=\frac{e^2}{4} R^4+O\left(R^2\log^2R\right).
	\end{equation*}
\end{theorem}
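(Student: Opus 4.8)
The plan is to establish matching lower and upper bounds, both organized around Jensen's formula and the notion of the \emph{effective degree} of $g$ on $R\bar{\mathbb{D}}$. Write $a_n:=R^n/\sqrt{n!}$, so that $g(z)=\sum_{n\ge0}\xi_n a_n (z/R)^n$ and $\mathbb{E}|g(Re^{i\theta})|^2=\sum_n a_n^2=e^{R^2}$. A Stirling estimate shows that $a_n$ attains its maximum $e^{R^2/2}$ near $n=R^2$, decays like a Gaussian of width $\asymp R$ there, and satisfies $a_n>1$ for $n<eR^2$ and $a_n<1$ for $n>eR^2$; moreover $\sum_{1\le n\le eR^2}\log a_n=\tfrac{e^2}{8}R^4+O(R^2\log^2 R)$. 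Heuristically $eR^2$ is the effective polynomial degree of $g$ on $R\bar{\mathbb{D}}$; it is exactly this block of coefficients that must be suppressed to create a hole, and the constant $\tfrac{e^2}{4}=2\cdot\tfrac{e^2}{8}$ will emerge from the displayed sum.

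\emph{Lower bound.} Fix $M=\lceil eR^2\rceil+C\lceil\log R\rceil$ and consider
\[
\mathcal{A}=\bigl\{|\xi_0|\ge1\bigr\}\cap\Bigl\{\textstyle\sum_{1\le n\le M}|\xi_n|a_n\le\tfrac14\Bigr\}\cap\Bigl\{\textstyle\sum_{n>M}|\xi_n|a_n\le\tfrac14\Bigr\}.
\]
On $\mathcal{A}$ one has $|g(z)|\ge|\xi_0|-\sum_{n\ge1}|\xi_n|(|z|/R)^n a_n\ge 1-\tfrac12>0$ for all $|z|\le R$, so $\mathcal{A}$ forces a hole. The three events are independent; $\mathbb{P}[|\xi_0|\ge1]=e^{-1}$, and since $\sum_{n>M}a_n=O(1/R)$ (choosing $C$ large) the third factor is $\ge\tfrac12$. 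For the middle factor I would use the \emph{tapered} thresholds $\delta_n:=1/(5Ma_n)$, which satisfy $\sum_{n\le M}\delta_n a_n\le\tfrac15$ and $\delta_n\le1$ on $1\le n\le M$, so by independence and $\mathbb{P}[|\xi_n|\le\delta_n]=1-e^{-\delta_n^2}\ge\tfrac12\delta_n^2$,
\[
\mathbb{P}\Bigl[\textstyle\sum_{n\le M}|\xi_n|a_n\le\tfrac14\Bigr]\ge\prod_{n=1}^{M}\tfrac12\delta_n^2=\exp\Bigl(-2\textstyle\sum_{n=1}^{M}\log a_n-O(R^2\log^2 R)\Bigr)=\exp\Bigl(-\tfrac{e^2}{4}R^4-O(R^2\log^2R)\Bigr),
\]
and hence $p_H(R)\ge\mathbb{P}[\mathcal{A}]\ge\exp\bigl(-\tfrac{e^2}{4}R^4-O(R^2\log^2R)\bigr)$. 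Note that a uniform truncation $\delta_n\equiv e^{-R^2/2}/\mathrm{poly}(R)$ would only give the weaker constant $e$ (forcing $g$ close to a constant); the profile $\delta_n\propto 1/a_n$ is what produces $\tfrac{e^2}{4}$.

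\emph{Upper bound.} On the hole event $\log|g|$ is harmonic on $R\bar{\mathbb{D}}$, so Jensen's formula gives $\Phi:=\frac1{2\pi}\int_0^{2\pi}\log|g(Re^{i\theta})|\,d\theta=\log|g(0)|=\log|\xi_0|$. Splitting off the doubly exponentially rare event $\{|\xi_0|>e^{\log^2 R}\}$ reduces the task to bounding the lower tail $\mathbb{P}[\Phi\le\log^2 R]$ at a level $o(R^2)$, whereas the typical value of $\Phi$ is $\tfrac12R^2-O(1)$. To control this tail I would partition the frequencies into $\asymp R$ consecutive blocks $I_1,I_2,\dots$ of length $\asymp R$, write $g=\sum_j B_j$ with $B_j(Re^{i\theta})=\sum_{n\in I_j}\xi_n a_n e^{in\theta}$, and use a sub-mean-value estimate (factor out the top frequency of a truncation and apply Jensen to the resulting analytic polynomial) to bound $\Phi$ from below by $\sum_j\log(\text{typical scale of }B_j)$ up to errors controlled by anticoncentration of each $B_j$ as a nondegenerate random trigonometric polynomial. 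The event $\{\Phi\le\log^2 R\}$ then forces all blocks with frequencies $\lesssim eR^2$ to be simultaneously smaller than typical by ``deficits'' $d_j$; since $B_j$ is a Gaussian vector of $\asymp|I_j|$ complex coordinates, the $j$-th constraint costs $\exp(-c|I_j|d_j^2)$, and the surviving constraint reads roughly $\sum_j|I_j|\log(\text{scale of }B_j)-\sum_j|I_j|d_j\lesssim\log^2 R$. Optimizing the $d_j$ is exactly dual to the lower-bound optimization and returns $\exp\bigl(-(\tfrac{e^2}{4}-o(1))R^4\bigr)$, with the $O(R^2\log^2 R)$ error coming from $\asymp R$ blocks each carrying a $\log^2$-order slack.

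The hard part is the upper bound, for two intertwined reasons. First, obtaining the \emph{sharp} constant forces the lower-tail estimate for $\Phi$ to reproduce, by duality, the extremal tapered profile $\delta_n\propto 1/a_n$ rather than a crude uniform truncation. Second, and more seriously, the estimate cannot be run on a high-probability ``good'' event on which $g$ is well approximated by its degree-$eR^2$ truncation, because any such event has probability only $1-e^{-cR^2}\gg p_H(R)$; the block scheme must therefore absorb all rare degeneracies of the $B_j$ (small values on short arcs, atypical zero counts, an atypically small $|\xi_0|$) directly into the bound, via pointwise lower estimates on $\log|g|$ that survive integration. Reconciling the two optimizations down to the $O(R^2\log^2 R)$ error — in particular the choice of block width and of the margin $M-eR^2$ — is the final technical point.
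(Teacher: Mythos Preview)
This theorem is quoted from the literature (the cited papers of Nishry, Ghosh--Nishry, and Sodin--Tsirelson) as background in the ``Hole probability'' subsection; the present paper gives no proof of it, so there is no in-paper argument to compare your proposal against.

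On the merits: your lower bound is the standard one and is correct. The tapered thresholds $\delta_n\propto 1/a_n$ together with the computation $2\sum_{1\le n\le eR^2}\log a_n=\tfrac{e^2}{4}R^4+O(R^2\log R)$ give the sharp constant, and the point that a uniform cutoff $\delta_n\equiv e^{-R^2/2}$ only yields the worse constant $e$ is well taken.

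Your upper bound, however, has a genuine gap. The sentence ``bound $\Phi$ from below by $\sum_j\log(\text{typical scale of }B_j)$ up to errors controlled by anticoncentration of each $B_j$'' is exactly the step that does not follow from any sub-mean-value or Jensen-type inequality: the blocks $B_j$ interfere on $|z|=R$, and there is no pointwise or integrated lower bound for $\log\bigl|\sum_j B_j\bigr|$ in terms of the individual $\log|B_j|$. Consequently the single constraint $\Phi\le\log^2 R$ does not decouple into independent small-ball events for the blocks, and your ``optimization over the deficits $d_j$'' remains a heuristic dual of the lower bound rather than a proof. The route actually taken in the cited works (and mirrored in Section~2.2 of the present paper for the hyperbolic model) is different: discretize $R\mathbb{T}$ to $N\asymp R^2$ equispaced points $z_k$, observe that the covariance matrix $\Sigma$ of the Gaussian vector $(g(z_k))_k$ is circulant with eigenvalues $\lambda_m=N\sum_{n\equiv m(N)}a_n^2$, and bound $\mathbb{E}\prod_k|g(z_k)|^{-\zeta}$ by $(\det\Sigma)^{-1}\cdot(\Lambda^{1-\zeta/2}\Gamma(1-\zeta/2))^N$. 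The constant $\tfrac{e^2}{4}$ then emerges from $\log\det\Sigma\sim 2\sum_{m<N}\log a_m$, the same sum as in the lower bound, and the $O(R^2\log^2 R)$ error comes from the auxiliary events (large $|\xi_0|$, large $M_g$, thin zero-free annulus) and the passage from the continuous integral to the discrete average. If you want to salvage the block picture, you would need to supply a mechanism that turns $\int\log|g|\le\log^2 R$ into simultaneous smallness of many \emph{linear} functionals of the coefficient vector; the circulant diagonalization does exactly this.
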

\begin{theorem}[{\cite[Theorem~1]{holedisk}}]
	Suppose $r\uparrow1$. Then, 
	\begin{equation*}
		-\log p_H(r)\simeq_L \begin{cases}
		\frac{1}{\left(1-r\right)^L}\log\frac{1}{1-r}  & \text{for }\  0<L<1,\\
		\frac{1}{1-r}\log\frac{1}{1-r}  & \text{for} \ L=1,\\
		\frac{1}{1-r}\log^2\frac{1}{1-r}  & \text{for } L>1,
		\end{cases} r\rightarrow1^{-},
	\end{equation*}
	where the constants are explicitly computed and positive.
\end{theorem}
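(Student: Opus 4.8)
The plan is to handle the case $L=1$ separately, and to treat $L\neq1$ by combining a direct construction (lower bound on $p_H$) with a harmonic-function argument that reduces the hole event to a small-deviation estimate for the low Taylor coefficients (upper bound on $p_H$). Throughout I will use $\mathbb E[|f_L(z)|^2]=(1-|z|^2)^{-L}$, the coefficient size $c_n:=\sqrt{\tfrac{L(L+1)\cdots(L+n-1)}{n!}}\asymp n^{(L-1)/2}$, $\mathbb E[n_L(r)]\asymp\tfrac1{1-r}$, and Buckley's variance asymptotics quoted above. For $L=1$ the zero set of $f_1=\sum\xi_n z^n$ is the Peres--Vir\'ag determinantal process, so $n_1(r)$ has the law of $\sum_{k\ge1}\mathrm{Ber}(r^{2k})$ (the eigenvalues of the Bergman kernel on $r\mathbb D$), hence $p_H(r)=\prod_{k\ge1}(1-r^{2k})$, and the $L=1$ case reduces to the $r\uparrow1$ asymptotics of $-\sum_{k\ge1}\log(1-r^{2k})$, which are classical. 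The rest of the plan concerns $L\neq1$.

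\textbf{Reduction.} Since a.s.\ $f_L$ does not vanish on $r\mathbb T$, on the hole event $H_r=\{f_L\neq0\text{ on }r\bar{\mathbb D}\}$ the function $\log|f_L|$ is harmonic in $r\mathbb D$; by Jensen's formula this forces, for \emph{every} $\rho\in(0,r]$, the identity $\tfrac1{2\pi}\int_0^{2\pi}\log|f_L(\rho e^{i\theta})|\,d\theta=\log|\xi_0|$, whereas this average typically equals $\mathbb E\log|f_L(\rho e^{i\theta})|=-\tfrac L2\log(1-\rho^2)+O(1)\to\infty$. So $H_r$ is a large, simultaneous downward deviation of a family of log-modulus averages; and since $r\bar{\mathbb D}$ is a hyperbolic disk of radius $R_h=\log\tfrac{1+r}{1-r}\asymp\log\tfrac1{1-r}$, the cost should concentrate on the outermost hyperbolic ring $\{\rho_1\le|z|\le r\}$, $1-\rho_1\asymp2(1-r)$, whose hyperbolic area $\asymp\tfrac1{1-r}$ is comparable to that of all of $r\bar{\mathbb D}$. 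I would therefore fix dyadic radii $\rho_j$ with $1-\rho_j\asymp2^j(1-r)$, $0\le j\le J\asymp\log_2\tfrac1{1-r}$, and reduce the problem to the behaviour, on $H_r$, of the low coefficients $\xi_n c_n$ for $n\le N_L(r)$ (a regime-dependent cutoff), equivalently of $\log|f_L(\rho e^{i\theta})|$ on circles with $\rho$ near $r$.

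\textbf{Lower bound.} To force a hole I would condition the low coefficients to lie below a deterministic envelope making $f_L$ zero-free in $r\bar{\mathbb D}$, leaving the high ones free. The basic version: on $\{|\xi_0|\ge1\}\cap\{|\xi_n|\le t_n:\,1\le n\le N\}$, with $N=N_L(r)$ so large that the second moment $\sum_{n>N}c_n^2 r^{2n}$ of the unconstrained tail is $O(1)$ (which forces $N\asymp\tfrac1{1-r}\log\tfrac1{1-r}$), and $t_n$ chosen so that $\sum_{n=1}^N t_n c_n r^n<\tfrac12$, one gets $|f_L(z)-\xi_0|<|\xi_0|$ throughout $r\bar{\mathbb D}$ and hence $H_r$ by Rouch\'e's theorem, at cost $\exp(-O(\sum_{n\le N}\log_+\tfrac1{t_n}))$. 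Optimising $\sum_{n\le N}\log\tfrac1{t_n}$ against the budget $\sum_{n\le N}t_n c_n r^n\le\tfrac12$, using $c_n r^n\asymp n^{(L-1)/2}e^{-n(1-r)}$, yields the stated upper bound on $-\log p_H(r)$ when $L>1$. For $0<L<1$ this is wasteful because the fluctuations of $n_L(r)$ are far larger; there I would instead let $f_L$ carry $\asymp(1-r)^{-L}$ zeros just \emph{outside} $r\bar{\mathbb D}$, conditioning the far fewer leading coefficients to be close to those of $(1-\bar w z)^{-L}$ for a suitable $|w|>r$ with $|w|-r$ a power of $1-r$ (this function is zero-free on $r\bar{\mathbb D}$), whose cost is $\asymp(1-r)^{-L}\log\tfrac1{1-r}$.

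\textbf{Upper bound; the main obstacle.} On $H_r$, write $f_L=e^{\phi}$ with $\phi$ analytic on $r\bar{\mathbb D}$ and $\phi(0)=\log\xi_0$. The Borel--Carath\'eodory inequality between $|z|=\rho_j$ and $|z|=\rho_{j-1}$ bounds $\max_{|z|\le\rho_j}|\phi(z)-\phi(0)|$ by an absolute constant times $\max_{|z|=\rho_{j-1}}\log_+|f_L|$ (the radius ratios being bounded away from $0$ and $1$); iterating this inward from $j=1$ and combining with the mean-value identities at all radii shows that $H_r$ forces the low coefficients to be abnormally small, $|\xi_n|\lesssim r^{-n}e^{-\epsilon(r)}$ for $n\le N_L(r)$, and more precisely that $\theta\mapsto\log|f_L(\rho e^{i\theta})|$ is atypically small on a net of $\theta$'s for $\rho$ near $r$. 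Covering $r\mathbb T$ by $\asymp\tfrac1{1-r}$ arcs of bounded hyperbolic diameter, on which $f_L$ is approximately independent, and applying a sharp Gaussian small-ball estimate then lower-bounds $-\log\mathbb P[H_r]$ by the desired quantity, with the matching constant coming from optimising the Borel--Carath\'eodory/Jensen bookkeeping across the $\asymp\log\tfrac1{1-r}$ scales. I expect the real work to be in making the implication ``$H_r\Rightarrow$ log-modulus small'' quantitatively tight in the outermost annuli ($j=O(1)$), where the hyperbolic area is largest and the angular correlations of $\log|f_L(\rho e^{i\theta})|$ decay only logarithmically, so that a crude union bound over a net would cost an extra power of $\log\tfrac1{1-r}$; a careful multi-scale decoupling is needed there, and its combinatorial bookkeeping is exactly what distinguishes the three regimes and pins down the correct power of $\log\tfrac1{1-r}$.
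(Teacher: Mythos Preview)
This theorem is not proved in the present paper at all: it is quoted verbatim as \cite[Theorem~1]{holedisk} and serves only as background for the discussion of deficit versus excess. There is therefore no ``paper's own proof'' here to compare your proposal against.

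That said, the paper does import several of the key lemmas from \cite{holedisk} (its Lemmas~2, 10, 15 and Section~6.2, appearing here in the proof of the overcrowding upper bound), and these expose the skeleton of how \cite{holedisk} actually obtains the hole-probability upper bound. It is \emph{not} via Borel--Carath\'eodory plus a multi-scale angular decoupling as you sketch. Rather: (i) Jensen's formula on $H_r$ forces $\int_{r\mathbb T}\log|f|\,d\mu=\log|\xi_0|$ to be abnormally small; (ii) one discretizes the circle by $N$ equispaced points $z_j=r_0\,e(j/N)$ and passes from ``integral small'' to ``$\sum_j\log|f(z_j)|$ small'' (with a $\beta^*$-shift argument of exactly the kind reproduced in Section~2.2 of this paper); (iii) one then bounds $\mathbb E\bigl[\prod_j|f(z_j)|^{-\zeta}\bigr]$ using the \emph{explicit} circulant eigenvalues $\lambda_m=N\sum_{n\equiv m(N)}a_n^2 r_0^{2n}$ of the covariance matrix of $(f(z_j))_j$, together with the bound $\mathbb E[\prod|\eta_j|^{-\zeta}]\le(\det\Sigma)^{-1}(\Lambda^{1-\zeta/2}\Gamma(1-\zeta/2))^N$. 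The three regimes $0<L<1$, $L=1$, $L>1$ emerge from the competition between $\log\det\Sigma$, the largest eigenvalue $\Lambda$ (which is $\lesssim N$ for $L\le1$ but $\lesssim N\delta^{1-L}$ for $L>1$), and the optimal choice of $N$ and $\zeta$ --- not from any decay-of-correlations bookkeeping across hyperbolic arcs. Your upper-bound sketch is vague precisely at the step you yourself flag, and the mechanism you propose for distinguishing the regimes does not match the one that actually operates.

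Your $L=1$ paragraph is correct and complete. Your lower-bound scheme (Rouch\'e with $|\xi_0|\ge1$ and constrained $|\xi_n|$, $n\le N$) is the right idea and is in the spirit of \cite{holedisk}; the alternative construction you propose for $0<L<1$ via $(1-\bar w z)^{-L}$ is not the route taken there.
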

\subsubsection{Loss of the balance between excess of zeros and their deficit.}
	Consider the event of excess of zeros where $\{n_L\left(r\right)>\mu_r + v_L\left(r\right)\}$ and the event of deficit of zeroes $\{n_L\left(r\right)<\mu_r-v_L\left(r\right)\}$ where $\mu_r=\mathbb{E}\left[n_L\left(r\right)\right]$. 
		Heuristically, letting $\alpha\downarrow1$ in \Cref{Theorem: overcrowding} would imply 
		\begin{align*}
		-\log\mathbb{P}\left[n_L\left(r\right)>\mu_r+v_L\left(r\right)^{\alpha}\right]\simeq -\log\mathbb{P}\left[n_L\left(r\right)>v_L\left(r\right)^{\alpha}\right]&\simeq 
		v_L\left(r\right)^{2\alpha-1}\\&\overset{\alpha\downarrow 1}{\sim}
		\begin{cases}
		v_L\left(r\right) & \frac{1}{2}\leq L <1, \\
		v_L\left(r\right)^{\frac{3-4L}{2-2L}} & 0< L<\frac{1}{2},
		\end{cases}
		\end{align*} 
		while the hole probability results implies that
		\begin{align*}
		-\log\mathbb{P}\left[n_L\left(r\right)<\mu_r-v_L\left(r\right)^{\alpha}\right]&\geq\frac{1}{\left(1-r\right)^L}\log\frac{1}{1-r}
		 \simeq 
		\begin{cases}
		v_L\left(r\right)^{L}, & \frac{1}{2}\leq L<1,\\
		v_L\left(r\right)^{\frac{L}{2-2L}}, & 0<L<\frac{1}{2}
		\end{cases}
		\end{align*}
		up to some logarithmic factor which we neglect. 
		I.e. the exponent is different and we do not get the symmetry between these events (contrary to the planar model), the probability for deficit is bigger.
}

	\subsection{Notation \& Some special functions} \label{special functions}
		\begin{itemize}[label={--}]
			\item $c$ and $C$ denote positive numerical constants that do not depend on $r,\alpha,L$. By $c_L$ we denote positive constant that depends only on $L$. In this work, the values of these constants are irrelevant and may vary from line to line.
			\item The notation $X\simeq Y$ means that there exists positive constants such that $cX\leq Y\leq CX$. The notation $X\simeq _LY$ is used when the constants depend on $L$. 
			\item $e\left(t\right)=e^{2\pi it}$.
			\item $\left[x\right]$ denotes the integer part of $x$.
			\item $n\equiv k\left(N\right)$ means that $n\equiv k\ \text{modulo} \ N$.
			\item $r\mathbb{D}$ denotes the open disk $\{z :|z|<r\}$ and $r\mathbb{T}$ denotes its boundary $\{z : |z|=r\}$.
			\item The planar Lebesgue measure is denoted by $m$ and the normalized angular measure on $r\mathbb{T}$ is denoted by $\mu$.
			\item We set $\delta:=1-r$. We always assume that $r$ is sufficiently close to $1$ (and hence $\delta$ close to $0$). 
			\item Let $f\left(r\right),g\left(r\right)$ be positive functions. The notation $f\left(r\right)\sim g\left(r\right)$ as $r\uparrow1$ means that $\lim_{r\uparrow 1}\frac{f\left(r\right)}{g\left(r\right)}=1$. The notation $f\left(r\right)\ll g\left(r\right)$ as $r\uparrow1$ means that $\lim_{r\uparrow1}\frac{f\left(r\right)}{g\left(r\right)}=0$. 
		\end{itemize}

\subsection*{Acknowledgments}

I am deeply grateful to my MSc advisors, Alon Nishry and Mikhail Sodin, for their constant encouragement, endless patience and enlightening discussions throughout this project. I would also like to thank Naomi Feldheim, Manjunath Krishnapur, Ron Peled and Oren Yakir for their valuable comments and fruitful
discussions. I thank Lakshmi Priya for reading this paper with great care and making
very insightful comments, which improved both the mathematics and the presentation. I thank the referee for careful reading this work and several useful suggestions. 
This work was supported by ERC Advanced Grant 692616, ISF Grant 382/15 and by BSF Start up Grant no. 2018341.

\section{Proof of Theorem 1.2}
{Fix $L>0$. Recall that we want to prove that there exists $C_L>0$ such that for all $r\in\left[\tfrac{1}{2},1\right)$ and all $V>1$ satisfying 
	\begin{equation}\label{assumption}
	V\geq\frac{C_L}{1-r}\log \frac{1}{1-r},
	\end{equation}   
	we have
	\begin{equation*}
	-\log\mathbb{P}\left[n_L(r)\geq V\right]\simeq_L\left(1-r\right)V^2.
	\end{equation*}
\subsubsection*{Several remarks on \Cref{Theorem: overcrowding}}
\begin{enumerate}
	\item {Assumption \eqref{assumption} is meant to ensure that the number of zeros $V$ is large enough (compared with the expectation). 
	\item 
	Recall that
	\begin{equation*}
	v_L\left(r\right)=\Var\left[n_L\left(r\right)\right]\sim 
	\begin{cases}
	\frac{c_L}{1-r}  & \text{for }\  L>\frac{1}{2},\\
	\frac{c}{1-r}\log\frac{1}{1-r}  & \text{for} \ L=\frac{1}{2},\\
	\frac{c_L}{(1-r)^{2-2L}}  & \text{for } 0<L<\frac{1}{2}
	\end{cases} \quad ,\quad r\uparrow1.
	\end{equation*} 
	By the above, $v_L^\alpha\left(r\right)$ will satisfy assumption \eqref{assumption}, provided that either
	 $L>0$, $\alpha>1$, or $L<\frac{1}{2}$, $\alpha\in (\frac{1}{2-2L},1]$.
	In both cases $v_L(r)^\alpha\gg\mu_r=\frac{Lr^2}{1-r^2}$ as $r\uparrow1$, and hence 
	\begin{equation*}
	\log\mathbb{P}\left[\left|n_L(r)-\mu_r\right|\geq v_L\left(r\right)^{\alpha}\right]\simeq\log\mathbb{P}\left[n_L\left(r\right)\geq v_L\left(r\right)^\alpha\right].
	\end{equation*}} 
\end{enumerate}
In order to prove \Cref{Theorem: overcrowding} we find matching lower and upper bounds for the $(\log)$ probability of the overcrowding event. We employ strategies somewhat similar to the ones introduced in {\cite{overcrowding,holeplane,holedisk,Beta*}. {Throughout the proof we will assume that $r$ is sufficiently close to $1$, as we can do.} 

\subsection{Lower Bound} 
\begin{claim}
	\label{claim:lower bound}
	For $r\in[\tfrac{1}{2},1)$ we have
	$$\log\mathbb{P}\left[n_L\left(r\right)\geq V\right]\geq -c_L\left(1-r\right)V^2.$$
\end{claim}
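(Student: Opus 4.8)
The plan is to make $f_L$, on the whole circle $r\mathbb T$, a minuscule perturbation of the single monomial $\xi_V\sqrt{a_V}\,z^V$, where $a_n:=\frac{L(L+1)\cdots(L+n-1)}{n!}$, and then read off $n_L(r)=V$ from Rouché's theorem; this avoids having to control the minimum modulus of a tail series near $r\mathbb T$. Recall $a_n\simeq_L(1+n)^{L-1}$; set $P(z):=\sum_{n=0}^{V-1}\xi_n\sqrt{a_n}z^n$ and $M_L(r):=\sum_{n\ge 0}\sqrt{a_n}\,r^n$, so that $M_L(r)\le c_L\delta^{-(L+1)/2}$ by a routine estimate. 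Fix $K=K_L$ with $K\simeq_L\frac1\delta\log\frac1\delta$, chosen large enough that $\sum_{n\ge K}\sqrt{a_{V+n}}\,r^n\le\frac18\sqrt{a_V}$ (possible because $a_{V+n}\le c_L(1+n)^{|L-1|}a_V$ and $r<1$); fix $\epsilon_1:=c_L\,\delta^{(|L-1|/2)+1}$ small enough that $\epsilon_1\sum_{1\le n<K}\sqrt{a_{V+n}}\,r^n\le\frac18\sqrt{a_V}$; and set $\epsilon_0:=\frac{\sqrt{a_V}\,r^V}{4M_L(r)}$. Because $r^V\le e^{-\delta V}\le\delta^{C_L}$ by the standing hypothesis, and this beats the polynomial factors, both $\epsilon_0$ and $\epsilon_1$ lie in $(0,1)$ for $\delta$ small.

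Consider the four events $A_0:=\{|\xi_n|\le\epsilon_0\text{ for }0\le n<V\}$, $A_1:=\{|\xi_V|\ge1\}$, $A_2:=\{|\xi_{V+n}|\le\epsilon_1\text{ for }1\le n<K\}$, and $A_3:=\{\sum_{n\ge K}|\xi_{V+n}|\sqrt{a_{V+n}}\,r^n\le\tfrac14\sqrt{a_V}\}$. They depend on the pairwise disjoint index blocks $\{0,\dots,V-1\}$, $\{V\}$, $\{V+1,\dots,V+K-1\}$ and $\{V+K,V+K+1,\dots\}$ of the i.i.d.\ Gaussians, hence are independent, and $\mathbb P(A_3)\ge\tfrac12$ by Markov's inequality together with the choice of $K$. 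On $A_0\cap A_1\cap A_2\cap A_3$, for every $z$ with $|z|=r$,
\begin{align*}
\bigl|f_L(z)-\xi_V\sqrt{a_V}\,z^V\bigr| &\le |P(z)|+r^V\sum_{n\ge 1}|\xi_{V+n}|\sqrt{a_{V+n}}\,r^n\\
&\le \epsilon_0 M_L(r)+r^V\bigl(\tfrac18\sqrt{a_V}+\tfrac14\sqrt{a_V}\bigr) = \tfrac58\,\sqrt{a_V}\,r^V < \bigl|\xi_V\sqrt{a_V}\,z^V\bigr|,
\end{align*}
where we used $|P|\le\epsilon_0M_L(r)=\tfrac14\sqrt{a_V}\,r^V$ on $A_0$, the defining property of $\epsilon_1$ together with the event $A_3$ to bound the sum over $n\ge1$ by $\tfrac38\sqrt{a_V}$, and $|\xi_V|\ge1$ on $A_1$. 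By Rouché's theorem $f_L$ and $\xi_V\sqrt{a_V}\,z^V$ have the same number of zeros in $r\mathbb D$, namely $V$, and $f_L$ has no zeros on $r\mathbb T$; hence $n_L(r)=V$ on this event, and so $\mathbb P[n_L(r)\ge V]\ge\mathbb P(A_0)\mathbb P(A_1)\mathbb P(A_2)\mathbb P(A_3)$.

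Since $\mathbb P(|\xi|\le t)=1-e^{-t^2}\ge t^2/2$ for $0<t\le1$, we get $\mathbb P(A_0)\ge(\epsilon_0^2/2)^V$, $\mathbb P(A_1)=e^{-1}$ and $\mathbb P(A_2)\ge(\epsilon_1^2/2)^{K-1}$, hence
\[
\log\mathbb P[n_L(r)\ge V]\ \ge\ 2V\log\epsilon_0+2(K-1)\log\epsilon_1-(V+K)\log2-1 .
\]
The leading term satisfies $2V\log\epsilon_0\ge V\log a_V+2V^2\log r-(L+1)V\log\tfrac1\delta-2V\log(4c_L)$, whose main contribution is $2V^2\log r\ge -C\,\delta V^2$ (as $\delta\le\tfrac12$). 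Every other quantity appearing above — $|V\log a_V|\le c_L(V\log V+V)$ (because $a_V\simeq_L V^{L-1}$), $(L+1)V\log\tfrac1\delta$, $V\log(4c_L)$, $|2(K-1)\log\epsilon_1|\le c_L\frac1\delta\log^2\tfrac1\delta$, and $(V+K)\log2\le c_L\frac1\delta\log\tfrac1\delta$ — is bounded by $c_L\,C_L^{-1}\delta V^2$, with $c_L$ depending only on $L$ (not on $C_L$): this uses the standing hypothesis $V\ge\frac{C_L}\delta\log\frac1\delta$ in the forms $\log\frac1\delta\le\frac{\delta V}{C_L}$, $V\le\frac{\delta V^2}{C_L}$, $\frac{\log V}{V}\le\frac{2\delta}{C_L}$ and $\frac1\delta\log^2\frac1\delta\le\frac{\delta V^2}{C_L^2}$. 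Taking $C_L$ large enough (depending only on $L$) absorbs all these errors into the $\delta V^2$ term and gives $\log\mathbb P[n_L(r)\ge V]\ge -c_L\,\delta V^2$, as claimed; if $V\notin\mathbb Z$ one runs the argument with $\lceil V\rceil\le2V$ in place of $V$. The argument needs no delicate probabilistic input — only independence of the coefficients and Rouché's theorem. The one genuinely load-bearing step, and the thing to carry out carefully, is this last bookkeeping (checking that each logarithmic error term is dominated by $\delta V^2$ once $C_L$ is large), together with the routine two-sided estimates $a_n\simeq_L(1+n)^{L-1}$ and $M_L(r)\simeq_L\delta^{-(L+1)/2}$ and the handling of the infinite tail $A_3$ by Markov's inequality rather than by constraining infinitely many coefficients.
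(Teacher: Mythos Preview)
Your proof is correct and follows the same Rouché-based strategy as the paper: force the single monomial $\xi_m\sqrt{a_m}\,z^m$ (with $m\approx V$) to dominate on $r\mathbb{T}$ by constraining the remaining coefficients into independent events whose probabilities multiply to $\exp(-c_L\delta V^2)$. The implementation differs only in bookkeeping---you use a uniform bound $|\xi_n|\le\epsilon_0$ on the low coefficients and handle the far tail via Markov's inequality on the tail sum (with a middle window of width $K\simeq_L\tfrac{1}{\delta}\log\tfrac{1}{\delta}$), whereas the paper uses $n$-dependent bounds for $n<m$, a middle window of width $m$, and coordinate-wise bounds $|\xi_n|\le\sqrt{n}$ for $n>2m$---but the resulting estimates are of the same order.
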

	\begin{proof}
		Fix $r$ close to 1 and take $m=m\left(r\right)=\left[V\right]+1$.
		Consider the following independent events:
		\begin{itemize}
			\item $E_1\coloneqq\left\{\left|\xi_n\right|\leq \sqrt{n} \ ,\ n\geq2m+1\right\},$
			\item $E_2\coloneqq\left\{\left|\xi_n\right|\binom{-L}{n}^{\frac{1}{2}}\leq\binom{-L}{m}^{\frac{1}{2}}r^m \ , \ m+1\leq n\leq 2m\right\}, $
			\item $E_3\coloneqq\left\{\left|\xi_m\right|\geq 4m\sqrt{1-r}\right\},$
			\item $E_4\coloneqq\left\{\left|\xi_n\right|\binom{-L}{n}^{\frac{1}{2}}r^n\leq\sqrt{1-r}\binom{-L}{m}^{\frac{1}{2}}r^m \ , \ 0\leq n\leq m-1\right\}$.
		\end{itemize}
			\begin{claim}
				\label{claim: Rosh}
			Under the events $E_1,E_2,E_3,E_4$, on $|z|=r$ we have,
			\begin{equation*}
			\left|f_L(z)-\xi_m\binom{-L}{m}^{\frac{1}{2}}z^m\right|<\left|\xi_m\binom{-L}{m}^{\frac{1}{2}}z^m\right|
			\end{equation*}
			whence , $n_L\left(r\right)=m$. 
			\end{claim}
			\begin{claim}
				\label{claim: Product prob}
				 For all $1\leq j\leq 4$, 
				 \begin{equation*}
				 \mathbb{P}\left[E_j\right]\geq \exp\left(-c_L\left(1-r\right)V^2\right).
				 \end{equation*}
			\end{claim}
				Overall, using the independence of these events we get the desired lower bound, proving \Cref{claim:lower bound}.
		It remains to prove these two claims.
				\begin{proof}[Proof of \Cref{claim: Rosh}] It holds that
					\begin{align*}
					\left|f_L\left(z\right)-\xi_m\binom{-L}{m}^{\frac{1}{2}}z^m\right|&\leq\left(\sum_{n=0}^{m-1}+\sum_{n=m+1}^{2m}+\sum_{n=2m+1}^{\infty}\right)\left|\xi_n\right|\binom{-L}{n}^{\frac{1}{2}}r^n
					\\&\leq m\sqrt{1-r}\binom{-L}{m}^{\frac{1}{2}}r^m+\binom{-L}{m}^{\frac{1}{2}}r^m\cdot\frac{r^{m+1}(1-r^m)}{1-r}\\&+\sum_{n=2m+1}^{\infty}\sqrt{n}\binom{-L}{n}^{\frac{1}{2}}r^n.
					\end{align*}
					So we need to show that the second and the third terms on the right hand side are also bounded by $m\sqrt{1-r}\binom{-L}{m}^{\frac{1}{2}}r^m$. 
					\begin{itemize}[label={--}]
						\item \underline{2nd term:} 
						\begin{equation*}
						r^m=e^{-m\left|\log r\right|}\ll\sqrt{1-r} \quad \text{(assumption \eqref{assumption})}
						\end{equation*}
						whence,
						\begin{equation*}
						r^{m+1}\frac{1-r^m}{1-r}<mr^m\ll m\sqrt{1-r}.
						\end{equation*}
						\item \underline{3rd term:} Let $c_n=\sqrt{n}\binom{-L}{n}^{\frac{1}{2}}r^n$. Then $\frac{c_{n+1}}{c_n}=\sqrt{\frac{L+n}{n}}\cdot r\rightarrow r$ as $n\rightarrow\infty$. Choose $r$ sufficiently close to 1 such that $\frac{c_{n+1}}{c_n}\leq\frac{1}{2}\left(1+r\right)$. Then, 
						\begin{equation*}
						\sum_{n=2m+1}^{\infty}c_n\leq c_{2m+1}\left(\frac{1+r}{2}\right)^{2m+1}\frac{1}{1-\frac{1+r}{2}}\ll c_{2m+1} \ \text{(assumption \eqref{assumption})}
						\end{equation*}
						\begin{equation*}
						\lesssim c_mr^m \ \left(\text{since} \binom{-L}{2m+1}\simeq\binom{-L}{m}\right)
						\end{equation*}
						\begin{equation*}
						=\sqrt{m}r^m\binom{-L}{m}^{\frac{1}{2}}r^m\ll m\sqrt{1-r}\binom{-L}{m}^{\frac{1}{2}}r^m.
						\end{equation*}
						\end{itemize}
				\end{proof}
		\begin{proof}[Proof of \Cref*{claim: Product prob}] 
					First note that  $|\xi_n|^2\sim\exp(1)$. 
					We start with the event $E_1$. \\
					As $r\uparrow1:$
					\begin{align*}
					\mathbb{P}\left[\left|\xi_n\right|\leq\sqrt{n}, \forall n\geq2m+1\right]&\geq 1-\sum_{n=2m+1}^{\infty}e^{-n}\\&=1-e^{-\left(2m+1\right)}\frac{1}{1-e^{-1}}\geq\frac{1}{2}.
					\end{align*}
					For $E_2$, since $\binom{-L}{n}\underset{n\rightarrow\infty}{\sim}\frac{n^{L-1}}{\Gamma(L)}$, we get that, for $m+1\leq n\leq 2m$, it holds that $\frac{\binom{-L}{m}^{\frac{1}{2}}}{\binom{-L}{n}^{\frac{1}{2}}}\simeq1$
					Therefore,
					\begin{equation*}
					 \frac{\binom{-L}{m}^{\frac{1}{2}}}{\binom{-L}{n}^{\frac{1}{2}}}\cdot r^m\ll 1.
					\end{equation*}
					Next, 
						\begin{align*}
						\mathbb{P}\left[\left|\xi_n\right|\leq\frac{\binom{-L}{m}^{\frac{1}{2}}}{\binom{-L}{n}^{\frac{1}{2}}}\cdot r^m \ ,\forall \ m+1\leq n\leq2m\right]&\geq\prod_{n=m+1}^{2m}c_Lr^{2m}\\&=c_L^mr^{2m^2}\geq e^{-c_Lm^2\left(1-r\right)}. 
						\end{align*}
					For $E_3$, $\mathbb{P}\left[|\xi_m|\geq 4m\sqrt{1-r}\right]=e^{-16m^2\left(1-r\right)}$.
					Finally, for $E_4$ we have,
					\begin{align*}
						\mathbb{P}\left[\left|\xi_0\right|\leq\sqrt{1-r} \frac{\binom{-L}{m}^{\frac{1}{2}}}{\binom{-L}{n}^{\frac{1}{2}}}\cdot r^{m-n}\right]&\geq\mathbb{P}\left[\left|\xi_0\right|\leq\sqrt{1-r}\cdot m^{\frac{L-1}{2}}r^m\right]
						\\&
						\geq \tfrac{1}{2}\left(1-r\right)m^{L-1}r^{2m}
						\geq e^{-c_Lm\left(1-r\right)},
						\end{align*}
						and for $1\leq n\leq m-1$ we have $\binom{-L}{n}\leq B_Ln^{L-1}$, while $\binom{-L}{m}\geq b_Lm^{L-1}$ hence
						\begin{align*}
						\mathbb{P}\left[\left|\xi_n\right|\leq\sqrt{1-r}\frac{\binom{-L}{m}^{\frac{1}{2}}}{\binom{-L}{n}^{\frac{1}{2}}} r^{m-n}\right]\geq\mathbb{P}\left[\left|\xi_n\right|\leq A_L\sqrt{1-r}\left(\frac{m}{n}\right)^{\frac{L-1}{2}} r^{m-n}\right]
						\end{align*}
						Note that for $r$ sufficiently close to 1, 
						\begin{equation*}
						\max_{1\leq n\leq m-1}\left(\frac{m}{n}\right)^{\frac{L-1}{2}}r^{m-n}=\left(\frac{m}{m-1}\right)^{\frac{L-1}{2}}r\leq D_L
						\end{equation*} 
						which implies that $A_L\sqrt{1-r}\left(\frac{m}{n}\right)^{\frac{L-1}{2}}r^{m-n}\ll1$, and hence, 
						\begin{equation*}
						\mathbb{P}\left[\left|\xi_n\right|\leq A_L\sqrt{1-r}\left(\frac{m}{n}\right)^{\frac{L-1}{2}}\cdot r^{m-n}\right]\geq c_L\left(1-r\right)\left(\frac{m}{n}\right)^{L-1}r^{2\left(m-n\right)}
						\end{equation*}
						So finally we get, 
						\begin{equation*}
						\mathbb{P}\left[\left|\xi_n\right|\binom{-L}{n}^{\frac{1}{2}}r^n\leq\sqrt{1-r}\binom{-L}{m}^{\frac{1}{2}}r^m \ ,\forall \ 0\leq n\leq m-1 \right]
						\end{equation*}
						\begin{align*}
						&
						\geq e^{-c_Lm\left(1-r\right)}\prod_{n=1}^{m-1}\left(d_L\left(1-r\right)\left(\frac{m}{n}\right)^{L-1}r^{2\left(m-n\right)}\right)
						\\&
						=e^{-c_Lm\left(1-r\right)}d_L^{m}\left(1-r\right)^mr^{b_Lm^2}\prod_{n=1}^{m-1}\left(\frac{m}{n}\right)^{L-1}
						\geq e^{-c_Lm^2\left(1-r\right)},
						\end{align*}		
					proving \Cref{claim: Product prob}.
			\end{proof}
		Combining altogether, we get the desired lower bound.
			\end{proof}
\subsection{Upper Bound}
\begin{claim}
	For $r\in[\tfrac{1}{2},1)$ we have 
	\begin{equation*}
	\log\mathbb{P}\left[n_L\left(r\right)\geq V\right]\leq-\tilde{c}_L\left(1-r\right)V^2.
	\end{equation*}
\end{claim}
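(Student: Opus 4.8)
The plan is to convert the overcrowding event into a lower‑tail event for the circular average of $\log|f_L|$, which can then be controlled by a sharp small‑ball estimate. Write
\[
M_\rho:=\frac{1}{2\pi}\int_0^{2\pi}\log\bigl|f_L(\rho e^{i\theta})\bigr|\,d\theta,\qquad \rho\in(0,1),
\]
and fix the auxiliary radius $R:=1-\tfrac{\delta}{2}$, so that $1-R^2\simeq\delta$ and $\log\tfrac{R}{r}\simeq\delta$. Since $f_L(0)=\xi_0\neq0$ almost surely, Jensen's formula gives $M_R-M_r=\int_r^R\frac{n_L(t)}{t}\,dt\ge n_L(r)\log\tfrac{R}{r}$. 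Setting $y:=V\log\tfrac{R}{r}$ (so $y\simeq\delta V$), we get the inclusion $\{n_L(r)\ge V\}\subseteq\{M_R-M_r\ge y\}$, and splitting at the level $s:=\tfrac{y}{2}$,
\[
\mathbb{P}\bigl[n_L(r)\ge V\bigr]\ \le\ \mathbb{P}\Bigl[M_R\ge\tfrac{y}{2}\Bigr]\ +\ \mathbb{P}\Bigl[M_r\le-\tfrac{y}{2}\Bigr].
\]
Both terms will be $\le\exp(-c_L\delta V^2)$ provided $C_L$ in assumption \eqref{assumption} is taken large enough in terms of $L$; this is what makes $\tfrac{y}{2}$ comfortably larger than the ``typical size'' $\tfrac{L}{2}\log\tfrac{1}{1-R^2}\simeq\tfrac{L}{2}\log\tfrac1\delta$ of $M_R$, as needed below.

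The first term is handled by a Chernoff bound together with the Gaussian moments of $f_L$. Since $f_L(Re^{i\theta})$ is a centred complex Gaussian of variance $(1-R^2)^{-L}$, one has $\mathbb{E}\,|f_L(Re^{i\theta})|^{\lambda}=(1-R^2)^{-L\lambda/2}\,\Gamma(1+\tfrac{\lambda}{2})$, and by Jensen's inequality for the uniform measure on $[0,2\pi]$,
\[
\mathbb{E}\bigl[e^{\lambda M_R}\bigr]=\mathbb{E}\exp\!\Bigl(\tfrac{1}{2\pi}\!\int_0^{2\pi}\!\log|f_L(Re^{i\theta})|^{\lambda}\,d\theta\Bigr)\ \le\ \mathbb{E}\Bigl[\tfrac{1}{2\pi}\!\int_0^{2\pi}\!|f_L(Re^{i\theta})|^{\lambda}\,d\theta\Bigr]=(1-R^2)^{-L\lambda/2}\,\Gamma(1+\tfrac{\lambda}{2}).
\]
This is finite for every $\lambda>0$; inserting it into $\mathbb{P}[M_R\ge s]\le e^{-\lambda s}\mathbb{E}[e^{\lambda M_R}]$, using $\log\Gamma(1+\tfrac{\lambda}{2})\le\tfrac{\lambda}{2}\log\lambda$ for $\lambda\ge2$, and optimising over $\lambda$ produces a doubly‑exponential bound $\mathbb{P}[M_R\ge s]\le\exp\!\bigl(-c\,e^{2(s-\frac{L}{2}\log\frac{1}{1-R^2})}\bigr)$ once $s$ exceeds that typical size. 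With $s=\tfrac{y}{2}$ and $y\simeq\delta V\gg\log\tfrac1\delta$ this is far smaller than $\exp(-c_L\delta V^2)$.

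The core of the argument is the second term, and it is here that the paper's new input — the asymptotics of $\mathbb{P}\bigl[\int_{r\mathbb{T}}\log|f_L|\,d\mu\le-x\bigr]$ as $x\to\infty$ — is used. What is needed is the refined lower‑tail estimate
\[
\mathbb{P}\bigl[M_r\le-x\bigr]\ \le\ \exp\!\Bigl(-\,c_L\,\frac{x^2}{1-r}\Bigr),\qquad x\ge c_L'\,\log\tfrac{1}{1-r},
\]
applied with $x=\tfrac{y}{2}\simeq\delta V$ (which exceeds $c_L'\log\tfrac1\delta$ once $C_L$ in \eqref{assumption} is large enough); since $(y/2)^2/(1-r)\simeq(\delta V)^2/\delta=\delta V^2$, this yields $\mathbb{P}[M_r\le-\tfrac{y}{2}]\le\exp(-c_L''\,\delta V^2)$. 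The mechanism behind the estimate is that $M_r$ cannot be driven very negative cheaply: since
\[
M_r=\tfrac12\cdot\tfrac{1}{2\pi}\!\int_0^{2\pi}\!\log|f_L(re^{i\theta})|^{2}\,d\theta\ \le\ \tfrac12\log\Bigl(\tfrac{1}{2\pi}\!\int_0^{2\pi}\!|f_L(re^{i\theta})|^{2}\,d\theta\Bigr)=\tfrac12\log\Bigl(\sum_{n\ge0}|\xi_n|^{2}\,\tfrac{L(L+1)\cdots(L+n-1)}{n!}\,r^{2n}\Bigr),
\]
forcing $M_r\le-x$ forces this weighted sum of independent $\mathrm{Exp}(1)$ variables below $e^{-2x}$, and the number of indices $n$ whose weight is not already $\lesssim e^{-2x}$ is of order $x/(1-r)$ (essentially $n\lesssim x/(1-r)$, since $r^{2n}$ behaves like $e^{-2n(1-r)}$); so this is a coordinated small‑ball event for of order $x/(1-r)$ independent exponentials, of probability $\exp(-\Theta(x^2/(1-r)))$.

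Making this last estimate rigorous — in particular, ruling out other, a priori cheaper, ways of forcing $M_r$ down and obtaining the exponent with the correct power $2$ — is the step I expect to be the main obstacle; it is exactly where one must improve on the cruder bound implicit in Krishnapur's work, since a plain Markov/Chernoff argument for $M_r$ only gives decay of order $e^{-2x}$, which is far too weak for large $V$. Granting it, the union bound above gives $\mathbb{P}[n_L(r)\ge V]\le\exp(-c_L\delta V^2)+\exp(-c_L''\delta V^2)\le\exp(-\tilde c_L(1-r)V^2)$ (using that $(1-r)V^2\to\infty$ under \eqref{assumption}), which is the asserted upper bound; combined with \Cref{claim:lower bound} this completes the proof of \Cref{Theorem: overcrowding}.
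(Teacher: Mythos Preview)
Your overall architecture --- Jensen's formula, the auxiliary radius $R=1-\tfrac{\delta}{2}$, the split into an upper tail for $M_R$ and a lower tail for $M_r$ --- is the same as the paper's, and your treatment of the upper tail is fine (the paper uses the maximum $M_f(R)$ rather than the mean, but either works). The gap is entirely in the lower-tail step, and it is not only that the rigorous argument is missing: the heuristic you give is in the wrong direction. Jensen's inequality yields
\[
M_r\ \le\ \tfrac12\log\Bigl(\sum_{n\ge0}|\xi_n|^2\,\tbinom{-L}{n}\,r^{2n}\Bigr),
\]
which is an \emph{upper} bound for $M_r$; the event $\{M_r\le -x\}$ therefore does not force the weighted sum of exponentials to be small. (What one does get cheaply is $M_r\ge\log|\xi_0|$ from Jensen's formula, but that produces only the Krishnapur-type bound $e^{-2x}$, precisely the decay you note is too weak.) So the ``coordinated small-ball for $\sim x/(1-r)$ coefficients'' picture, while suggestive of the right exponent, is not a mechanism for the lower tail of $M_r$.

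The paper's proof of $\mathbb{P}[\,\int_{r\mathbb{T}}\log|f|\,d\mu\le -x\,]\le\exp(-c_L x^2/(1-r))$ proceeds quite differently. One first \emph{discretises}: with $N\simeq \eta V$ equispaced points $z_j=r_0 e^{i\theta_j}$ on a slightly smaller circle, there exists $\beta^*$ with $\tfrac1N\sum_j\log|f(z_je^{i\beta^*})|=\int_{r_0\mathbb{T}}\log|f|\,d\mu$. The probability that a \emph{fixed} rotation of this discrete average is $\le -c(1-r)V$ is bounded via Chebyshev applied to $\prod_j|f(z_je^{i\beta})|^{-\zeta}$ with $\zeta=2-\delta$, and the negative moment is controlled by $\det\Sigma$ and the top eigenvalue of the $N\times N$ covariance matrix $\Sigma$ of $(f(z_j))_j$; this is where the $\exp(-c_L(1-r)V^2)$ comes from, since $\log\det\Sigma\gtrsim LN\log N-\delta N^2$. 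Passing from the random $\beta^*$ to a fixed $\beta$ requires an a priori bound on the angular derivative of $\log|f|$ on $r_0\mathbb{T}$ (hence auxiliary events ensuring $f$ has no zeros in a thin annulus and $M_f(r_0)$ is not tiny) together with a Fubini argument on $\mathbb{T}\times\Omega$. None of this is visible in your sketch, and it is the substance of the upper bound.
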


In this part we'll use the following parameters:
\begin{equation*}
\delta\coloneqq 1-r \ , R\coloneqq\tfrac{1}{2}\left(1+r\right)=1-\tfrac{1}{2}\delta\ ,\ r_0\coloneqq 1-2\delta\ , \ N\coloneqq\eta\left[V\right],
\end{equation*} 
where $\eta$ is a sufficiently small constant that will be chosen later on.
Recall the Poisson-Jensen formula, if $f\in\text{Hol}\left(r\bar{\mathbb{D}}\right)$, $\mathcal{Z}_f=f^{-1}\{0\}$ then,
\begin{equation*}
\log\left|f\left(z\right)\right|=\int_{0}^{2\pi}\log\left|f\left(re^{i\theta}\right)\right|\frac{r^2-\left|z\right|^2}{\left|re^{i\theta}-z\right|^2}d\mu\left(\theta\right)+\sum_{\lambda\in\mathcal{Z}_f\cap r\bar{\mathbb{D}}}\log\left|\frac{r\left(z-\lambda\right)}{r^2-\bar{\lambda}z}\right|
\end{equation*}
 and also Jensen's formula, for $z=0 , f(0)\neq0$ 
 \begin{equation*}
 \log\left|f\left(0\right)\right|+\int_{0}^{r}\frac{n\left(t\right)}{t}dt=\int_{0}^{2\pi}\log\left|f\left(re^{i\theta}\right)\right|d\mu\left(\theta\right).
 \end{equation*} 
 Hence we have, 
 \begin{equation}
 \label{eq:integral inequality}
 n_L\left(r\right)\log\frac{R}{r}\leq\left(\int_{R\mathbb{T}}-\int_{r\mathbb{T}}\right)\log\left|f\right|d\mu\leq\log M_f\left(R\right)-\int_{r\mathbb{T}}\log|f|d\mu
 \end{equation}
 where $M_f\left(r\right)=\max_{r\mathbb{D}}\left|f\right|$.
 \begin{lemma}[{\cite[Lemma~2]{holedisk}}] 
 	\label{lemma: maximum large}
 		Let $f$ be a GAF on $\mathbb{D}$, and $s\in\left(0,\delta\right)$. Put $$\sigma_f^2(r)=\max_{r\bar{\mathbb{D}}}\mathbb{E}[|f|^2].$$ Then, for every $\lambda>0$, $$\mathbb{P}\big[M_f(r)>\lambda\sigma_f(r+s)\big]\leq \frac{C}{s}e^{-c\lambda^2}.$$
 	\end{lemma}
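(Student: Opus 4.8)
The plan is to reduce everything to union bounds over finitely many points of the circle $r\mathbb{T}$, after splitting $f$ into a polynomial of degree $\simeq 1/s$ and a high-frequency remainder whose $L^2$-mass on $r\mathbb{T}$ is geometrically smaller than on $(r+s)\mathbb{T}$. Throughout write $\sigma:=\sigma_f(r+s)$. Since $\mathbb{E}[|f(z)|^2]=\sum_n|c_n|^2|z|^{2n}$ is increasing in $|z|$, for every fixed $z\in r\bar{\mathbb{D}}$ the value $f(z)$ is a centered complex Gaussian with $\mathbb{E}[|f(z)|^2]\le\sigma^2$, so $\mathbb{P}[|f(z)|>t\sigma]\le e^{-t^2}$ for all $t>0$; the same holds for any truncation of $f$, and for a truncation to a single dyadic frequency block the variance is even constant along $r\mathbb{T}$. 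By the maximum principle $M_f(r)=\max_{r\mathbb{T}}|f|$. Fix an absolute constant $C_0$ (large, to be specified), let $k^\ast$ be the least integer with $2^{k^\ast}\ge C_0/s$, put $N^\ast=2^{k^\ast}\simeq 1/s$, and write $f=P+Q$ with $P=\sum_{n<N^\ast}\xi_nc_nz^n$ and $Q=\sum_{n\ge N^\ast}\xi_nc_nz^n$, so that $M_f(r)\le M_P(r)+M_Q(r)$.

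For $P$, the function $\theta\mapsto P(re^{i\theta})$ is a trigonometric polynomial of degree $<N^\ast$, so Bernstein's inequality bounds its derivative in $\theta$ by $N^\ast M_P(r)$. Sampling at $\lceil 8\pi N^\ast\rceil\simeq 1/s$ equispaced angles $\theta_j$ and interpolating with this estimate gives $M_P(r)\le 2\max_j|P(re^{i\theta_j})|$. A union bound over these points, together with the pointwise Gaussian tail (each $P(re^{i\theta_j})$ being a complex Gaussian of variance $\le\sigma^2$), yields $\mathbb{P}[M_P(r)>u\sigma]\le \tfrac{C}{s}e^{-cu^2}$ for every $u>0$.

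For $Q$ I would decompose dyadically, $Q=\sum_{k\ge k^\ast}f_k$ with $f_k=\sum_{2^k\le n<2^{k+1}}\xi_nc_nz^n$, and set $\sigma_k^2:=\mathbb{E}[|f_k(re^{i\theta})|^2]=\sum_{2^k\le n<2^{k+1}}|c_n|^2r^{2n}$. The key point is that for $n\ge 2^k\ge N^\ast$, using $\log(1+s/r)\ge s/2$ (valid since $s<1$) and $n\ge N^\ast 2^{k-k^\ast}\ge (C_0/s)\,2^{k-k^\ast}$,
\begin{equation*}
(r/(r+s))^{2n}=(1+s/r)^{-2n}\le e^{-ns}\le e^{-C_0 2^{k-k^\ast}},
\end{equation*}
so that $\sigma_k^2\le e^{-C_0 2^{k-k^\ast}}\sum_{2^k\le n<2^{k+1}}|c_n|^2(r+s)^{2n}\le e^{-C_0 2^{k-k^\ast}}\sigma^2$, i.e. $\sigma_k\le e^{-\beta 2^{k-k^\ast}}\sigma$ with $\beta=C_0/2$. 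Each $f_k$ is treated exactly as $P$ was (Bernstein plus a net of $\simeq 2^k$ points), giving $\mathbb{P}[M_{f_k}(r)>2t\sigma_k]\le C2^k e^{-t^2}$ for all $t>0$. Allocating the budget by taking $t_{k^\ast+j}$ proportional to $u\,e^{\beta 2^{j-1}}$ gives $\sum_{k\ge k^\ast}t_k\sigma_k\le \tfrac u2\sigma$ (for $C_0$ large, since then the series $\sum_{j\ge0}e^{-\beta 2^{j-1}}$ is small), while $M_Q(r)\le\sum_{k\ge k^\ast}M_{f_k}(r)$ and a union bound over all blocks give
\begin{equation*}
\mathbb{P}[M_Q(r)>u\sigma]\le \sum_{j\ge0}C\,2^{k^\ast+j}\exp\bigl(-c u^2 e^{\beta 2^j}\bigr)= C\,2^{k^\ast}\sum_{j\ge0}2^j\exp\bigl(-c u^2 e^{\beta 2^j}\bigr)\le \frac{C}{s}e^{-cu^2},
\end{equation*}
the last step because $2^{k^\ast}\simeq 1/s$ and the doubly-exponential weight $e^{\beta 2^j}$ makes the $j$-series converge to a constant multiple of $e^{-cu^2}$ (the range $u\lesssim 1$ being trivial, since there the right-hand side of the claimed bound exceeds $1$). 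Combining with the estimate for $P$ applied at $u=\lambda/2$ finishes the proof.

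The main difficulty is producing the clean exponent $e^{-c\lambda^2}$ with only a factor $C/s$ in front. A single net over all of $r\bar{\mathbb{D}}$, with the derivative of $f$ controlled only by $\tfrac1s M_f(r+s)$, loses a factor $s$ in the exponent; splitting $f$ into all $\simeq\log(1/s)$ dyadic blocks and distributing the deviation budget among them loses a $\log(1/s)$ there (indeed, one should only expect the nontrivial regime $\lambda\gtrsim\sqrt{\log(1/s)}$). The decomposition above avoids both obstructions: keeping the frequencies below $\simeq 1/s$ bundled into one polynomial of degree $\simeq 1/s$ makes the union bound cost exactly $\simeq 1/s$, and beyond that degree the geometric gain $\sigma_k\le e^{-\beta 2^{k-k^\ast}}\sigma$ makes the dyadic tail contribute only lower-order terms.
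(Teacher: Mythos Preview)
The paper does not prove this lemma; it is quoted from \cite[Lemma~2]{holedisk} and used as a black box in the upper-bound argument. So there is no ``paper's own proof'' to compare against here.

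Your argument is correct. The low-frequency part $P$ (degree $<N^\ast\simeq 1/s$) is handled by Bernstein's inequality plus a net of $O(1/s)$ points on $r\mathbb{T}$, which produces exactly the prefactor $C/s$ with a clean Gaussian exponent, since each $P(re^{i\theta_j})$ is complex Gaussian with variance $\le\sigma^2$. For the tail $Q$, the key inequality $(r/(r+s))^{2n}\le e^{-ns}$ (valid because $\log(1+s/r)\ge s/(r+s)>s$, using $r+s<1$) gives the damping $\sigma_k\le e^{-\beta\,2^{k-k^\ast}}\sigma$, and your allocation $t_{k^\ast+j}\propto u\,e^{\beta 2^{j-1}}$ makes the dyadic sum over $j$ converge to a constant multiple of $e^{-cu^2}$ for $u\gtrsim 1$, the small-$u$ range being vacuous since $C/s\ge 1$. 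Combining the two pieces at level $u=\lambda/2$ yields the stated bound.

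Your closing diagnosis of the difficulty is also accurate: a single net at scale $s$ controlled only by Cauchy's estimate $|f'|\lesssim \tfrac{1}{s}M_f(r+s)$ runs into circularity (or loses a factor in the exponent), while splitting all of $f$ into $\simeq\log(1/s)$ dyadic blocks and distributing the budget equally costs a $\log(1/s)$ in the exponent. Keeping frequencies below $1/s$ in a single block and using the geometric decay above that threshold is precisely what circumvents both losses.
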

	For the hyperbolic GAF $f$, we have $\sigma_f\left(t\right)=\left(1-t^2\right)^{-\tfrac{L}{2}}$. Hence 
	\begin{equation*}
	2^{-\tfrac{L}{2}}\left(1-t\right)^{-\tfrac{L}{2}}\leq\sigma_f\left(t\right)\leq\left(1-t\right)^{-\tfrac{L}{2}},
	\end{equation*} 
	whence, 
	\begin{equation}\label{pointwise variance}
	\mathbb{P}\left[M_f\left(t\right)>\lambda\left(1-\left(t+s\right)\right)^{{-\tfrac{L}{2}}}\right]\leq\mathbb{P}\left[M_f\left(t\right)>\lambda\sigma_f\left(t+s\right)\right].
	\end{equation}
	Choose $t=R,s=\tfrac{1}{4}\delta,$ and $\lambda=\tfrac{1}{2}\sqrt{1-r}V$ in \eqref{pointwise variance} and use \Cref{lemma: maximum large} to get  
	\begin{equation}\label{maximum large than}
	\mathbb{P}\left[M_f\left(R\right)>\left(\tfrac{\delta}{4}\right)^{\frac{1}{2}-\frac{L}{2}}V\right]\leq C\delta^{-1}e^{-c\left(1-r\right)V^2}.
	\end{equation}	
	Using that $\delta=1-r$ in \eqref{maximum large than}, we get 
	\begin{equation}
	\mathbb{P}\left[M_f\left(R\right)>2^{L-1}\left(1-r\right)^{\tfrac{1-L}{2}}V\right]\leq C\delta^{-1}e^{-c\left(1-r\right)V^2}.
	\end{equation}
Denote by $E_r^{(1)}:=\{M_f\left(R\right)>2^{L-1}\left(1-r\right)^{\tfrac{1-L}{2}}V\}$. \\ Then, outside $E_r^{(1)}$, the following holds:
{ 	
\begin{equation}\label{maximum not too large}
	\begin{split}
		\log M_f\left(R\right)&\leq \left(L-1\right)\log 2+\log V+\tfrac{L-1}{2}\log\left(1-r\right)^{-1}
		\leq  a_L\log V
		\end{split} 
\end{equation}
where the last inequality follows from our assumption \eqref{assumption}.
Next, set $Z_r=\left\{n_L\left(r\right)\geq V\right\}$; using \eqref{eq:integral inequality} and \eqref{assumption}, we see that on $Z_r\backslash E_r^{(1)}$ we have: 
\begin{equation*}
\int_{r\mathbb{T}}\log\left|f\right|d\mu\leq 
-d_L\left(1-r\right)V,
\end{equation*} where $d_L=-\frac{1}{2}+\frac{a_L}{C_L}$.}

Choosing $N$ points $\{r_0e^{i\theta_j}\}_{j=1}^{N}$ with $r_0<r$,
\begin{align*}
\int_{0}^{2\pi}\frac{1}{N}\sum_{j=1}^{N}\log\left|f\left(r_0e^{i\theta_j}\cdot e^{i\beta}\right)\right|d\mu\left(\beta\right)&=\frac{1}{N}\sum_{j=1}^{N}\int_{0}^{2\pi}\log\left|f\left(r_0e^{i\theta_j}\cdot e^{i\beta}\right)\right|d\mu\left(\beta\right)
\\&
=\log\left|f\left(0\right)\right|+\int_{0}^{r_0}\frac{n_L\left(t\right)}{t}dt.
\end{align*} 
By continuity away from zeros of $\log\left|f\right|$ we can find $\beta^*$ such that
{
\begin{equation}
\label{estimate:avergae}
\frac{1}{N}\sum_{j=1}^{N}\log\left|f\left(r_0e^{i\theta_j}\cdot e^{i\beta^*}\right)\right|=\int_{r_0\mathbb{T}}\log\left|f\right|d\mu\leq\int_{r\mathbb{T}}\log\left|f\right|d\mu\leq-d_L\left(1-r\right)V.
\end{equation}
}
In the next step, we wish to avoid the dependence on $\beta^*$ by expanding this estimate to a small arc on $r_0\mathbb{T}$ centered at $r_0e^{i\beta^*}$. To this end, we will bound the angular derivative of $\log\left|f\right|$ assuming that $f$ doesn't vanishes in a thin ring around the circle $r_0\mathbb{T}$.

Let $\left|z\right|=r_0$ and assume that $f\neq0$ on $r_0-\epsilon\leq\left|z\right|\leq r_0+\epsilon$ with $\epsilon=\epsilon\left(r\right)\ll(1-r)^3$. Differentiating the Poisson-Jensen, we get
\begin{align*}
\frac{\partial\log\left|f\left(r_0e^{i\varphi}\right)\right|}{\partial\varphi}&=\int_{0}^{2\pi}\frac{{2}r_0\cdot r\left(r^2-r_0^2\right){\sin\left(\varphi-\theta\right)}}{\left|re^{i\theta}-r_0e^{i\varphi}\right|^4}\cdot\log\left|f\left(re^{i\theta}\right)\right|d\mu\left(\theta
\right)
\\&
{
+\sum_{\lambda\in\mathcal{Z}_f\cap r\bar{\mathbb{D}}} r_0|\lambda|\sin\left(\theta-\varphi_\lambda\right)\left[\frac{1}{\left|r_0e^{i\varphi}-\lambda\right|^{2}}-\frac{r^2}{\left|r^2-\bar{\lambda}r_0e^{i\varphi}\right|^{2}}\right].}
\\&
\end{align*}
where $\lambda=|\lambda|e^{i\varphi_\lambda}$. 
Then,
\begin{align*}
\left|\frac{\partial\log\left|f\left(r_0e^{i\varphi}\right)\right|}{\partial\varphi}\right|&\leq\frac{{4}}{\left(r-r_0\right)^3}\int_{0}^{2\pi}\left|\log\left|f\left(re^{i\theta}\right)\right|\right|d\mu\left(\theta\right)
\\& +\sum_{\lambda\in\mathcal{Z}_f\cap r\bar{\mathbb{D}}}\left[\frac{1}{\underset{\lambda\in\mathcal{Z}_f\cap r\bar{\mathbb{D}}}{\min}\left|r_0e^{i\varphi}-\lambda\right|^2}+\frac{1}{\left(r-r_0\right)^2}\right]
\\&\leq \frac{{4}}{\left(r-r_0\right)^3}\int_{0}^{2\pi}\left|\log\left|f\left(re^{i\theta}
\right)\right|\right|d\mu\left(\theta\right)+\frac{2n_L\left(r\right)}{\epsilon^2}
\end{align*}

Using \eqref{eq:integral inequality} and \eqref{maximum not too large}, we bound each term on the right hand side.

	 The second summand is bounded by 
	 { 
	\begin{align*}
	\frac{2n_L\left(r\right)}{\epsilon^2}&\leq\frac{2}{\epsilon^2}\left[\log M_f\left(R\right)-\int_{r\mathbb{T}}\log\left|f\right|d\mu\right]
	\lesssim_L\frac{1}{\epsilon^2}\left[\log V+\int_{r\mathbb{T}}\log\_\left|f\right|d\mu\right]
	\\& \overset{\eqref{assumption}}{\lesssim}\frac{1}{\epsilon^2}\left[\left(1-r\right)V +\int_{r\mathbb{T}}\log\_\left|f\right|d\mu\right] .
	\end{align*} 
The first summand is bounded by
	\begin{align*}
	\frac{4}{\left(r-r_0\right)^3}\int_{0}^{2\pi}\left|\log\left|f\left(re^{i\theta}
	\right)\right|\right|d\mu&\lesssim\frac{1}{\left(r-r_0\right)^3}\left[\log M_f\left(r\right)+\int_{r\mathbb{T}}\log\_\left|f\right|d\mu\right]
	\\&
	\lesssim\frac{1}{\left(r-r_0\right)^3}\left[\left(1-r\right)V+\int_{r\mathbb{T}}\log\_\left|f\right|d\mu\right].
	\end{align*}
	Hence, 
	\begin{equation}
	\label{estimate:angular derivative}
	\max_{\varphi}\left|\frac{\partial\log\left|f\left(r_0e^{i\varphi}\right)\right|}{\partial\varphi}\right|\lesssim\frac{1}{\epsilon^2} \left(\left(1-r\right)V+\int_{r\mathbb{T}}\log\_\left|f\right|d\mu\right).
	\end{equation}
}
\begin{lemma}
	\label{Lemma:function not zero}
	Set $\epsilon:=e^{-s\left(1-r\right)V^2}$ with some fixed constant $s>0$. Then, outside a negligible event $E_r^{(2)}$, $f\neq0$ in the annulus $\left\{r_0-\epsilon\leq\left|z\right|\leq r_0+\epsilon\right\}$ . 
\end{lemma}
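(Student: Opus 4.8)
The plan is to show that the bad event $E_r^{(2)}$ costs essentially nothing, via a first-moment bound on the number of zeros landing in the thin annulus. Write $A := \{r_0-\epsilon \le |z| \le r_0+\epsilon\}$ and let $E_r^{(2)} := \{\mathcal{Z}_f \cap A \ne \emptyset\}$. Since $\#(\mathcal{Z}_f \cap A)$ is a nonnegative integer-valued random variable and $f$ almost surely has no zeros on the circles $\{|z|=r_0\pm\epsilon\}$, Markov's inequality gives
\begin{equation*}
\mathbb{P}\big[E_r^{(2)}\big] \le \mathbb{E}\big[\#(\mathcal{Z}_f \cap A)\big] = \mathbb{E}[n_L(r_0+\epsilon)] - \mathbb{E}[n_L(r_0-\epsilon)].
\end{equation*}
Thus the lemma reduces to bounding the increment of the explicit first intensity $\mathbb{E}[n_L(t)] = \tfrac{Lt^2}{1-t^2}$ from \eqref{expactation} across the window $[r_0-\epsilon, r_0+\epsilon]$ around $t = r_0 = 1-2\delta$.

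The second step is to estimate that increment by the mean value theorem. Since $\epsilon \ll \delta$ (verified below), on the whole window one has $1-t^2 \simeq \delta$, so that
\begin{equation*}
\mathbb{E}[n_L(r_0+\epsilon)] - \mathbb{E}[n_L(r_0-\epsilon)] \le 2\epsilon \max_{t\in[r_0-\epsilon,\,r_0+\epsilon]} \frac{2Lt}{(1-t^2)^2} \le \frac{c_L\,\epsilon}{\delta^2},
\end{equation*}
and plugging in $\epsilon = e^{-s(1-r)V^2}$ gives $\mathbb{P}[E_r^{(2)}] \le c_L\,\delta^{-2}e^{-s(1-r)V^2}$.

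The third step is to confirm that this is negligible at the exponential scale $e^{-c(1-r)V^2}$ that governs the rest of the upper bound, and simultaneously to check the standing hypothesis $\epsilon \ll \delta^3$ used to differentiate the Poisson--Jensen formula. Both are consequences of assumption \eqref{assumption}: it forces $(1-r)V \ge C_L\log\tfrac{1}{1-r}$, hence $\log\tfrac1\delta \le (1-r)V/C_L$ and therefore $\delta^{-2} = e^{2\log(1/\delta)} \le e^{2(1-r)V/C_L}$, while $(1-r)V^2 \ge C_L V\log\tfrac1\delta$ makes $e^{-s(1-r)V^2}$ far smaller than $\delta^3 = e^{-3\log(1/\delta)}$. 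Since $V > 1$, the factor $\delta^{-2}$ is absorbed into $e^{-s(1-r)V^2}$, leaving $\mathbb{P}[E_r^{(2)}] \le e^{-\frac{s}{2}(1-r)V^2}$ for $r$ close enough to $1$. I do not expect any genuine obstacle in this argument; the only delicate point is the bookkeeping of the constant $s$, which is squeezed from the other direction by the $\epsilon^{-2}$ factor in the angular-derivative estimate \eqref{estimate:angular derivative}, so that $s$ must be taken small there while $\tfrac{s}{2}(1-r)V^2$ still has to dominate the constant $\tilde{c}_L$ produced by the main term of the upper bound.
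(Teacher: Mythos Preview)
Your proof is correct and follows essentially the same route as the paper: a first-moment (Markov/Chebyshev) bound on the number of zeros in the annulus via the explicit formula $\mathbb{E}[n_L(t)]=\tfrac{Lt^2}{1-t^2}$, giving $\mathbb{P}[E_r^{(2)}]\lesssim_L \epsilon\,\delta^{-2}$, and then absorbing the polynomial factor $\delta^{-2}$ into the exponential using assumption~\eqref{assumption}. The paper phrases the conclusion as $\mathbb{P}[E_r^{(2)}]\le e^{-s'(1-r)V^2}$ for any $s'<s$, which your bound $e^{-\frac{s}{2}(1-r)V^2}$ is a special instance of; your extra paragraph on the interaction of $s$ with the angular-derivative estimate is contextual commentary rather than part of the lemma's proof.
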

{
\begin{proof}
	Applying Chebyshev's inequality, we get
	\begin{align*}
	\mathbb{P}\left[n_L\left(r_0+\epsilon\right)-n_L\left(r_0-\epsilon\right)\geq1\right]&\leq\mathbb{E}\left[n_L\left(r_0+\epsilon\right)-n_L\left(r_0-\epsilon\right)\right]
	\\&
	=L\left(\frac{\left(r_0+\epsilon\right)^2}{1-\left(r_0+\epsilon\right)}-\frac{\left(r_0-\epsilon\right)^2}{1-\left(r_0-\epsilon\right)^2}\right)\\&\lesssim_L\epsilon\left(1-r\right)^{-2}. 
		\end{align*}
	Therefore, for every $s'<s$, we get (assuming $r$ is sufficiently close to 1)
	\begin{equation*}
	\mathbb{P}\left[E_r^{(2)}\right]\leq e^{-s'\left(1-r\right)V^2},
	\end{equation*} 
	proving the lemma.
\end{proof}
}
Next, to give an estimate for the integral $\int_{r\mathbb{T}}\log_{-}\left|f\right|d\mu$, we want to make sure that $M_f\left(r_0\right)$ is not too small.
{
\begin{lemma}
	\label{lemma:function not large}
	Let $E_r^{(3)}=\left\{\log M_f\left(r_0\right)\leq-\left(1-r\right)V\right\}$. Then 
	\begin{equation*}
	\mathbb{P}\left[E_r^{\left(3\right)}\right]\leq e^{-a_L\left(1-r\right)V^2}.
	\end{equation*}
\end{lemma}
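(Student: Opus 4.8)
The plan is to convert smallness of $M_f(r_0)$ into \emph{simultaneous} smallness of the first $\asymp V$ Taylor coefficients of $f_L$ and then invoke independence. Write $f_L(z)=\sum_{n\ge0}\xi_n\binom{-L}{n}^{1/2}z^n$. By Cauchy's coefficient estimate on $\{|z|=r_0\}$ we have $|\xi_n|\binom{-L}{n}^{1/2}r_0^{\,n}\le M_f(r_0)$ for all $n\ge0$, so on $E_r^{(3)}$, simultaneously for all $n$,
\begin{equation*}
|\xi_n|\le t_n:=\binom{-L}{n}^{-1/2}r_0^{-n}e^{-(1-r)V}.
\end{equation*}
Since the $\xi_n$ are independent with $|\xi_n|^2\sim\exp(1)$, for any cutoff $N_0$,
\begin{equation*}
\mathbb{P}\bigl[E_r^{(3)}\bigr]\le\prod_{n=0}^{N_0}\bigl(1-e^{-t_n^2}\bigr)\le\prod_{n=0}^{N_0}t_n^2,\qquad\text{hence}\qquad -\log\mathbb{P}\bigl[E_r^{(3)}\bigr]\ge 2\sum_{n=0}^{N_0}\log\tfrac1{t_n}.
\end{equation*}
(The crude bound $M_f(r_0)\ge|f_L(0)|=|\xi_0|$ would give only $e^{-c(1-r)V}$, too small by a factor $V$ in the exponent; the quadratic rate comes from using $\asymp V$ coefficients.)

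I would take $N_0:=[\eta_0 V]$ for a small $\eta_0=\eta_0(L)\in(0,\tfrac16)$. As $-\log r_0=-\log(1-2\delta)\le 3\delta$ for $\delta$ small, for $0\le n\le N_0$ one has $r_0^{-n}\le e^{3\eta_0(1-r)V}$ and therefore $t_n\le\binom{-L}{n}^{-1/2}e^{-(1-3\eta_0)(1-r)V}<1$ (using that $\binom{-L}{n}^{-1/2}$ is polynomially bounded in $n\le V$ and the comparison $\log V\ll(1-r)V$ recorded below). For such $n$,
\begin{equation*}
\log\tfrac1{t_n}=\tfrac12\log\binom{-L}{n}+n\log\tfrac1{r_0}+(1-r)V\ \ge\ (1-r)V-\kappa_L\log(n+2),
\end{equation*}
where I discarded the nonnegative middle term and used $\binom{-L}{n}\ge c_L(n+1)^{L-1}$ with $\kappa_L=\kappa_L(L)$ (trivial for $L\ge1$; for $0<L<1$ it follows from $\binom{-L}{n}\sim n^{L-1}/\Gamma(L)$). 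Summing and using $N_0+1>\eta_0V$ and $\log(N_0+2)\le\log V$,
\begin{equation*}
-\log\mathbb{P}\bigl[E_r^{(3)}\bigr]\ \ge\ 2\eta_0V\bigl((1-r)V-\kappa_L\log V\bigr).
\end{equation*}

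It then suffices to absorb the correction, i.e.\ to verify $\kappa_L\log V\le\tfrac12(1-r)V$; with this, the right side above is $\ge\eta_0(1-r)V^2$ and the lemma follows with $a_L=\eta_0$. This is where \eqref{assumption} is used: putting $y:=(1-r)V$, \eqref{assumption} reads $y\ge C_L\log\tfrac1{1-r}$, so $\log V=\log y+\log\tfrac1{1-r}\le\log y+y/C_L$; since we are free to take the constant $C_L$ in \eqref{assumption} as large as we wish in terms of $L$ (while $\kappa_L,\eta_0$ depend only on $L$), and since then $y\ge C_L\log2$ is bounded below by a large $L$-dependent constant for all $r\in[\tfrac12,1)$, both $\kappa_L\log\tfrac1{1-r}$ and $\kappa_L\log y$ can be forced below $\tfrac14 y$, giving $\kappa_L\log V\le\tfrac12 y$. \emph{This comparison $\log V\lesssim_L(1-r)V$ is the only genuinely delicate point}: for $L\ge1$ the term $\tfrac12\log\binom{-L}{n}$ is nonnegative and the comparison is not needed at all, so the real content is the logarithmic-per-coefficient loss caused by the mild polynomial decay of $\binom{-L}{n}$ when $L<1$ — which is precisely what the shape $V\ge\frac{C_L}{1-r}\log\frac1{1-r}$ in \eqref{assumption} is designed to absorb.
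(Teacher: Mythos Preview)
Your proof is correct and follows essentially the same route as the paper: Cauchy's coefficient bound on $\{|z|=r_0\}$ forces simultaneous smallness of the first $\asymp V$ coefficients $\xi_n$, and independence together with $\mathbb{P}[|\xi_n|\le t]\le t^2$ yields the quadratic exponent, with assumption~\eqref{assumption} absorbing the polynomial factor $\binom{-L}{n}^{-1/2}$. The paper uses the cutoff $n\le V/4$ and the cruder $r_0^{-1}\le e^{2(1-r)}$, but the structure is identical; your write-up is in fact more careful about the step $\log V\lesssim_L(1-r)V$ that the paper dismisses as ``assuming $C_L$ is large.''
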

\begin{proof}
		Assume that $M_f\left(r_0\right)\leq e^{-\left(1-r\right)V}$ , then by Cauchy’s inequalities
		\begin{equation*}
		\left|\xi_n\right|\binom{-L}{n}^{\frac{1}{2}}\leq \frac{M_f\left(r_0\right)}{{r_0}^n}\ ,\ n\geq0.
		\end{equation*}
		  Since for all $n>0$, $\binom{-L}{n}^{\frac{1}{2}}\geq c_Ln^{\frac{1}{2}\left(L-1\right)}$ we get that, 
		  \begin{equation*}
		  \left|\xi_n\right|\leq B_Ln^{\frac{1}{2}\left(1-L\right)}{r_0}^{-n}e^{-\left(1-r\right)V}.
		  \end{equation*}
			Recall that $r_0= 1-2\delta$ so, when $r\geq\frac{1}{2}$, $\delta\leq\frac{1}{2}$, hence $r_0^{-1}\leq e^{2\left(1-r\right)}$ and then,
			\begin{equation*}
			\left|\xi_n\right|\leq B_Ln^{\frac{1}{2}\left(1-L\right)}e^{\left(1-r\right)\left(2n-V\right)}.
			\end{equation*}
	 In the range $1\leq n\leq \frac{1}{4}V$, the right-hand side doesn't exceed (assuming $C_L$ is large)
	 \begin{equation*}
	 e^{-b_L\left(1-r\right)V}
	 \end{equation*}
	 and the probability that $\left|\xi_n\right|$ is smaller than this term doesn't exceed
	 \begin{equation*}
	 e^{-2b_L\left(1-r\right)V}.
	 \end{equation*}
	Overall,
	\begin{equation*}
	\mathbb{P}\left[E_r^{\left(3\right)}\right]\leq \left(e^{-2b_L\left(1-r\right)V}\right)^{\frac{1}{4}V}=e^{-a_L\left(1-r\right)V^2}.
	\end{equation*}
\end{proof}
}
	 We estimate $\int_{r\mathbb{T}}\log\_\left|f\right|$ assuming that we are outside of the event $E_r^{(1)}\cup E_r^{(2)}\cup E_r^{(3)}$.
	 By \Cref{lemma:function not large}, there exists $w\in r_0\mathbb{T}$ such that $\log\left|f\left(w\right)\right|\geq-\left(1-r\right)V$. 
	 Let $P\left(z, w\right)$ be the Poisson kernel for the disk $r{\mathbb{D}}$, evaluated at $w$. Then, using subharmonicity of $\log\left|f\right|$ in $r{\mathbb{D}}$ we have,
	 \begin{equation*}
	 -\left(1-r\right)V\leq\log\left|f\left(w\right)\right|\leq\int_{r\mathbb{T}}\log\left|f\right| P\left(\cdot,w\right)d\mu=\int_{r\mathbb{T}}\left(\log_+\left|f\right|-\log\_\left|f\right|\right) P\left(\cdot,w\right)d\mu,
	 \end{equation*}
	 whence,
	 \begin{align*}
	 \frac{r-r_0}{r+r_0}\int_{r\mathbb{T}}\log\_\left|f\right|d\mu&\leq\int_{r\mathbb{T}}\log\_\left|f\right| P\left(\cdot,w\right)d\mu
	 \\&\leq\int_{r\mathbb{T}}\log_+\left|f\right| P\left(\cdot,w\right)d\mu+\left(1-r\right)V
	\\&
	 \leq\frac{r+r_0}{r-r_0}\int_{r\mathbb{T}}\log_+\left|f\right|d\mu+\left(1-r\right)V\leq \frac{r+r_0}{r-r_0}\log M_f\left(r\right)+\left(1-r\right)V
	\\&
	 \lesssim_L\frac{r+r_0}{r-r_0}\left(1-r\right)V+\left(1-r\right)V.
	 \end{align*}
	 Therefore, assuming $r$ is sufficiently close to 1,
	 \begin{equation*}
	 {\int_{r\mathbb{T}}\log\_\left|f\right|d\mu\lesssim\frac{1}{\left(r-r_0\right)^2}\left(1-r\right)V.}
	 \end{equation*}
	 \\
	 Finally, we can plug this into \eqref{estimate:angular derivative} and obtain, 
	 \begin{equation}
	 \label{esitmate: angular derivative final}
	 \begin{split}
	 	 \max_{\varphi}\left|\frac{\partial\log\left|f\left(r_0e^{i\varphi}\right)\right|}{\partial\varphi}\right|\lesssim\frac{1}{\epsilon^2}\left(1-r\right)V\leq e^{s''\left(1-r\right)V^2} ,&\quad s''>s, \\ \text{where $r$ is sufficiently close to $1$.}
	 \end{split}
	 \end{equation}
	Combining this with \eqref{estimate:avergae} we get that for ${\left|\beta-\beta^*\right|\leq\Delta:=e^{-2s''V^2\left(1-r\right)}}$,
	\begin{align*}
	\frac{1}{N}\sum_{j=1}^{N}\log\left|f\left(r_0e^{i\theta_j}\cdot e^{i\beta}\right)\right|&-\frac{1}{N}\sum_{j=1}^{N}\log\left|f\left(r_0e^{i\theta_j}\cdot e^{i\beta^*}\right)\right|
	\\&
	\leq\left|\beta-\beta^*\right|\cdot\sup_{\varphi}\left|\frac{\partial\log\left|f\left(r_0e^{i\varphi}\right)\right|}{\partial\varphi}\right|,
	\end{align*}
	which implies
	\begin{equation}
	{
	\label{estimae:averge final}
	\frac{1}{N}\sum_{j=1}^{N}\log\left|f\left(r_0e^{i\theta_j}\cdot e^{i\beta}\right)\right|\leq -\tfrac{d_L}{2}\left(1-r\right)V.}
	\end{equation}

	Now we are ready to complete the proof. Consider the product space $\left(\mathbb{T},m\right)\times\left(\Omega,\mathbb{P}\right)$ with the probability measure $\mathbb{Q}=m\times\mathbb{P}$. Here $\beta$ is chosen uniformly in $[0,2\pi]$ and $\Omega$ is the probability space for our GAF $f$. 
	Consider the following events: 
	\begin{itemize}[label={--}]
		{
		\item $H=\left\{\left(\beta,\omega\right) \mid n_L\left(r\right)\geq V\right\}$ $=\mathbb{T}\times Z_r$
		\item $K=\left\{\left(\beta,\omega\right)\mid\max_{\varphi}\left|\frac{\partial\log\left|f\left(r_0e^{i\varphi}\right)\right|}{\partial\varphi}\right|\leq e^{s''\left(1-r\right)V^2}\right\}\supset\mathbb{T}\times\left(Z_r\backslash E_r^{(1)}\cup E_r^{(2)} \cup E_r^{(3)}\right)$
		\item $J=\left\{\left(\beta,\omega\right) \mid f\neq0 \ \text{in} \ \text{ann}\left(0,r_0-e^{-s\left(1-r\right)V^2},r_0+e^{-s\left(1-r\right)V^2}\right)\right\}=\mathbb{T}\times\left(\Omega\backslash E_r^{(3)}\right)$
		\item $C=\left\{\left(\beta,\omega\right) \mid \frac{1}{N}\sum_{j=1}^{N}\log|f(r_0e^{i\theta_j}\cdot e^{i\beta})|\leq-d_L\left(1-r\right)V\right\}$
		\item $D=\{(\beta,\omega) \mid |\beta-\beta^*(\omega)|\leq\Delta=e^{-2s''\left(1-r\right)V^2}\}$}
	\end{itemize}

\begin{remark}
		\rm{
		Recall that $\beta^*:\Omega\mapsto[0,2\pi)$, where $\beta^{*}$ was defined in \eqref{estimate:avergae}.
		We mention that $\beta^*$ may indeed be chosen to be measurable with respect to $\Omega$. See \Cref{measurabiliy} for a more detailed explanation.}    
	\end{remark}
	
Next, note that $H,K,J$ do not depend on $\beta$, and that $\mathbbm{1}_C\left(\beta,\omega\right)\geq\mathbbm{1}_{H\cap J \cap K\cap D}\left(\beta,\omega\right)$. \newline
Also, the events $H\cap J\cap K$ and $D$ are independent. Indeed, 
\begin{equation*}
\mathbbm{1}_{H\cap J \cap K\cap D}(\beta,\omega)=\mathbbm{1}_{H\cap J\cap K}(\beta,\omega)\cdot\mathbbm{1}_D(\beta,\omega)=\mathbbm{1}_{H\cap J\cap K}(0,\omega)\cdot\mathbbm{1}_D(\beta,\omega),
\end{equation*}
and by Fubini's theorem,
\begin{align*}
\mathbb{Q}\left(H\cap J\cap K\cap D \right)&=\int_{\mathbb{T}\times\Omega}\mathbbm{1}_D\left(\beta,\omega\right)\mathbbm{1}_{H\cap J\cap K}(0,\omega)d\mathbb{Q}
\\&
=\int_{\Omega}\left[\int_{\left\{\left|\beta-\beta^*\left(\omega\right)\right|\right\}}dm\left(\beta\right)\right]\mathbbm{1}_{H\cap J\cap K}\left(0,\omega\right)d\mathbb{P}\left(\omega\right)
\\&
=2\Delta\int_{\Omega}\mathbbm{1}_{H\cap J\cap K}\left(0,\omega\right)d\mathbb{P}\left(\omega\right)=\mathbb{Q}\left(D\right)\cdot \mathbb{Q}\left(H\cap J\cap K\right)
\end{align*}
as desired.
Then,  $\mathbbm{1}_C\left(\beta,\omega\right)\geq\mathbbm{1}_{H\cap J\cap K}\left(0,\omega\right)\cdot\mathbbm{1}_{D}\left(\beta,\omega\right)$
and
\begin{align*}
\mathbb{Q}\left(C\right)&\geq2\Delta P\left(H \cap J \cap K\right)\geq2\Delta\left[\mathbb{Q}\left(H\right)-\mathbb{Q}\left(K^{\complement}\right)-\mathbb{Q}\left(J^{\complement}\right)\right]
\\&
\geq \Delta\left[\mathbb{P}\left(Z_r\right)-\mathbb{P}\left(E_r^{(1)}\right)-\mathbb{P}\left(E_r^{(2)}\right)-\mathbb{P}\left(E_r^{(3)}\right)\right] 
\end{align*}
whence 
\begin{equation*}
\label{estimate:Large Deviation}
\mathbb{P}\left(Z_r\right)\leq\frac{1}{\Delta}\mathbb{Q}\left(C\right)+\mathbb{P}\left(E_r^{(1)}\right)+\mathbb{P}\left(E_r^{(2)}\right)+\mathbb{P}\left(E_r^{(3)}\right) 
\end{equation*}
\begin{equation}\label{finish probabilities}
{\leq e^{2s''\left(1-r\right)V^2}\mathbb{Q}\left(C\right)+e^{-a_L\left(1-r\right)V^2}.}
\end{equation}
The following lemma is required to finish the proof of the upper bound.
\begin{lemma}
	\label{lemma:avreage with beta}
	For every fixed $\beta$, 
	\begin{equation}
	\mathbb{P}\left[\frac{1}{N}\sum_{j=1}^{N}\log\left|f\left(r_0e^{i\theta_j}\cdot e^{i\beta}\right)\right|\leq-\tfrac{d_L}{2}\left(1-r\right)V\right]\leq e^{-c_L\left(1-r\right)V^2}.
	\end{equation}
\end{lemma}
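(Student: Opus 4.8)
The plan is to fix $\beta$, reduce to $\beta=0$ using the rotation invariance of the law of $f$, view $\tfrac1N\sum_j\log|f(r_0e^{i\theta_j})|$ as the logarithm of the geometric mean of the moduli of a nondegenerate complex Gaussian vector, and then run a reverse–Markov (negative moment) estimate whose only nontrivial ingredient is a lower bound on the determinant of the relevant covariance matrix. Since $(e^{in\beta}\xi_n)_{n\ge0}\overset{d}{=}(\xi_n)_{n\ge0}$, the function $z\mapsto f(ze^{i\beta})$ has the same law as $f$, so the probability in question does not depend on $\beta$ and we may take $\beta=0$. I would also take the interpolation nodes equally spaced, $\theta_j=2\pi j/N$, and write $\omega=e^{2\pi i/N}$. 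Then $g:=\bigl(f(r_0\omega^j)\bigr)_{j=1}^N$ is a centered complex Gaussian vector in $\mathbb{C}^N$; it is circularly symmetric because $\mathbb{E}[\xi_n^2]=0$, and it is nondegenerate because already $(\xi_0,\dots,\xi_{N-1})\mapsto g$ is onto $\mathbb{C}^N$ (Vandermonde at the distinct nodes $r_0\omega^j$). Its covariance $\Sigma$ is the circulant matrix with symbol $c_\ell=\mathbb{E}[f(r_0\omega^\ell)\overline{f(r_0)}]=(1-r_0^2\omega^\ell)^{-L}=\sum_{k\ge0}\binom{-L}{k}r_0^{2k}\omega^{\ell k}$, so its eigenvalues are
\[
\lambda_m=N\sum_{k\ge 0,\; k\equiv m\,(N)}\binom{-L}{k}r_0^{2k},\qquad m=0,1,\dots,N-1.
\]

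The event in question is $\bigl\{\prod_{j=1}^N|g_j|\le e^{-\frac{d_L}2\delta VN}\bigr\}$, so applying Markov's inequality to the positive random variable $\prod_j|g_j|^{-1}$ gives
\[
\mathbb{P}\Bigl[\tfrac1N{\textstyle\sum_j}\log|g_j|\le-\tfrac{d_L}2\delta V\Bigr]\le e^{-\frac{d_L}2\delta VN}\,\mathbb{E}\Bigl[{\textstyle\prod_j}|g_j|^{-1}\Bigr].
\]
Writing $\mathbb{E}[\prod_j|g_j|^{-1}]=\frac1{\pi^N\det\Sigma}\int_{\mathbb{C}^N}\prod_j|w_j|^{-1}e^{-w^*\Sigma^{-1}w}\,dm(w)$ and using $w^*\Sigma^{-1}w\ge|w|^2/\lambda_{\max}(\Sigma)$, the integral factors into $N$ copies of $\int_{\mathbb{C}}|w|^{-1}e^{-|w|^2/\lambda_{\max}}\,dm(w)$ (finite, since $|w|^{-1}$ is locally integrable in $\mathbb{C}$), whence $\mathbb{E}[\prod_j|g_j|^{-1}]\le \pi^{N/2}\lambda_{\max}^{N/2}/\det\Sigma$. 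It remains to bound $\lambda_{\max}$ above and $\det\Sigma$ below. From the eigenvalue formula, $\lambda_{\max}\le N\sum_{k\ge0}\binom{-L}{k}r_0^{2k}=N(1-r_0^2)^{-L}\lesssim_L N\delta^{-L}$, while $\lambda_0\ge N$ and $\lambda_m\ge N\binom{-L}{m}r_0^{2m}$ for $1\le m\le N-1$, so
\[
\det\Sigma=\prod_{m=0}^{N-1}\lambda_m\ \ge\ N^{N}\,r_0^{N(N-1)}\prod_{m=1}^{N-1}\binom{-L}{m}\ \ge\ N^{N}\,r_0^{N(N-1)}\,e^{-c_L N\log N},
\]
the last bound being trivial ($\ge1$) when $L\ge1$ and following from $\binom{-L}{m}\ge c_L m^{-(1-L)}$ when $0<L<1$. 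Combining these and using $|\log r_0|\lesssim\delta$, we obtain $\mathbb{E}[\prod_j|g_j|^{-1}]\le \delta^{-LN/2}\,e^{O(\delta N^2)}\,e^{c_L N\log N}$.

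Finally I would substitute $N=\eta[V]$ and invoke assumption \eqref{assumption}. Since $\delta V\ge C_L\log\tfrac1\delta$ forces (for $r$ close to $1$) both $\delta V\gtrsim\log V$ and $\delta V\gtrsim\log\tfrac1\delta$ with implied constant proportional to $C_L$, and since $\tfrac\eta2 V\le N\le\eta V$, one has $\delta VN\ge\tfrac\eta2\delta V^2$, $\delta N^2\le\eta^2\delta V^2$, and $N\log N+N\log\tfrac1\delta\lesssim\tfrac1{C_L}\,\eta\,\delta V^2$. Plugging these into the two displays above, the exponent of $\mathbb{P}[\cdots]$ is bounded by $-\tfrac{d_L\eta}4\delta V^2+C\eta^2\delta V^2+\tfrac{b_L}{C_L}\,\eta\,\delta V^2$; choosing $\eta$ small enough and then $C_L$ large enough, both depending only on $L$ (recall $d_L>0$), makes this $\le-c_L\delta V^2$, which is the assertion of the lemma.

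The step I expect to be the real obstacle is the lower bound on $\det\Sigma$, i.e. quantifying how ill-conditioned the circulant covariance is: its smallest eigenvalue is only of order $N\binom{-L}{N-1}r_0^{2N}$, exponentially small in $N$, and this is exactly what produces the factor $e^{O(\delta N^2)}=e^{O(\eta^2\delta V^2)}$. That loss is harmless only because $\eta$ is a small constant ($\eta^2\ll\eta$) and because \eqref{assumption} guarantees that the subexponential losses $e^{O(N\log N)}$ and $e^{O(N\log(1/\delta))}$ are likewise dominated by the gain $e^{-\frac{d_L}2\delta VN}$; everything else is routine complex Gaussian bookkeeping.
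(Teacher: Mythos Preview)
Your argument is correct and follows the same skeleton as the paper's proof: Markov's inequality on the negative moment $\prod_j|g_j|^{-\zeta}$, the circulant eigenvalue formula $\lambda_m=N\sum_{k\equiv m}\binom{-L}{k}r_0^{2k}$, and the lower bound $\log\det\Sigma\ge LN\log N-O(\delta N^2)$, after which the assumption $\delta V\ge C_L\log\tfrac1\delta$ absorbs the subexponential losses.

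The one substantive difference is the exponent in the Markov step: you take $\zeta=1$, the paper takes $\zeta=2-\delta$. With the paper's choice the factor $\Lambda^{(1-\zeta/2)N}=\Lambda^{\delta N/2}$ is essentially harmless, at the price of $\Gamma(\delta/2)^N\sim\delta^{-N}$; with your choice one keeps $\Lambda^{N/2}\lesssim (N\delta^{-L})^{N/2}$, which after dividing by $\det\Sigma$ leaves a residual $(\tfrac12-L)^+N\log N$ and an $\tfrac{L}{2}N\log\tfrac1\delta$. Both residues are $\lesssim \tfrac{\eta}{C_L}\,\delta V^2$ under \eqref{assumption}, so either choice closes the argument. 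Your route is slightly more elementary (no need to push $\zeta$ to the boundary or invoke $\Gamma$ near its pole, and you derive the moment bound by the crude inequality $w^*\Sigma^{-1}w\ge|w|^2/\lambda_{\max}$ rather than citing the lemma from \cite{holedisk}); the paper's route gives marginally sharper intermediate estimates that are not needed here. One small wording point: since $d_L$ itself depends on $C_L$, it is cleaner to first take $C_L$ large enough that $d_L\ge\tfrac14$, and only then pick $\eta$ small --- but this is cosmetic.
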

Using this lemma and Fubini's theorem we get,
\begin{equation*}
\mathbb{Q}(C)=\int\left[\int \mathbbm{1}_C(\beta,\omega)d\mathbb{P}(\omega)\right]dm\left(\beta\right)\leq e^{-c_L \left(1-r\right)V^2}.
\end{equation*}
Plugging in $\Delta$ together with this bound into  \Cref{finish probabilities} we get, 
\begin{equation*}
\mathbb{P}\left(Z_r\right)\leq e^{\left(2s''-c_L\right)\left(1-r\right)V^2}+e^{-a_LV^2\left(1-r\right)}.
\end{equation*}
Finally we choose $s''<\frac{1}{2}c_L$ and get the desired upper-bound.

\begin{proof}[Proof of \Cref{lemma:avreage with beta}]
Fix $\beta$,$\zeta>0$ by Chebyshev’s inequality,
\begin{align*}
&\mathbb{P}\left[\sum_{j=1}^{N}\log\left|f\left(r_0e^{i\theta_j}\cdot e^{i\beta}\right)\right|\leq -\tfrac{d_L}{2}N\left(1-r\right)V\right]\\&= \mathbb{P}\left[\prod_{j=1}^{N}\left|f\left(r_0e^{i\theta_j}\cdot e^{i\beta}\right)\right|^{-\zeta}\geq e^{\tfrac{d_L}{2}\zeta N\left(1-r\right)V}\right]
\\&
\leq e^{-\tfrac{d_L}{2}\zeta N\left(1-r\right)V}\cdot\mathbb{E}\big[\prod_{j=1}^{N}|f(r_0e^{i\theta_j}\cdot e^{i\beta})|^{-\zeta}\big].
\end{align*}
To estimate the expectation on the right-hand side, we use several lemmas from \cite{holedisk}. 
\begin{lemma}[{\cite[Lemma~15]{holedisk}}] Suppose that $\left(\eta_j\right)_{1\leq j\leq N}$ are complex Gaussian random variables with covariance matrix $\Sigma$. Then, for $0\leq\zeta<2$,
\begin{equation*}
\mathbb{E}\left[\prod_{j=1}^{N}\left|\eta_j\right|^{-\zeta}\right]\leq\frac{1}{\det\Sigma}\left(\Lambda^{\left(1-\frac{1}{2}\zeta\right)}\cdot\Gamma\left(1-\frac{1}{2}\zeta\right)\right)^N,
\end{equation*}	
where $\Lambda$ is the largest eigenvalue of $\Sigma$. 	
\end{lemma}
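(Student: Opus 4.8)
The plan is to prove this by a direct computation with the explicit density of a centered proper complex Gaussian vector, reducing everything to a one-dimensional Gamma integral. First I would dispose of the degenerate case: if $\Sigma$ is singular then $\det\Sigma=0$, the right-hand side is $+\infty$, and there is nothing to prove; so assume $\Sigma$ is positive definite. Since $\eta=(\eta_1,\dots,\eta_N)$ is a vector of values of a Gaussian analytic function, it is a centered \emph{proper} complex Gaussian vector, i.e.\ $\mathbb{E}[\eta_j\bar\eta_k]=\Sigma_{jk}$ and $\mathbb{E}[\eta_j\eta_k]=0$, so its law has density $p(z)=\pi^{-N}(\det\Sigma)^{-1}\exp(-z^{*}\Sigma^{-1}z)$ with respect to Lebesgue measure $m$ on $\mathbb{C}^N$. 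Hence
\[
\mathbb{E}\Big[\prod_{j=1}^N|\eta_j|^{-\zeta}\Big]=\frac{1}{\pi^N\det\Sigma}\int_{\mathbb{C}^N}\prod_{j=1}^N|z_j|^{-\zeta}\,e^{-z^{*}\Sigma^{-1}z}\,dm(z).
\]

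The key step is to decouple the coordinates. Since the smallest eigenvalue of $\Sigma^{-1}$ equals $\Lambda^{-1}$, where $\Lambda=\lambda_{\max}(\Sigma)$, we have the pointwise bound $z^{*}\Sigma^{-1}z\ge\Lambda^{-1}|z|^2=\Lambda^{-1}\sum_j|z_j|^2$, so the integrand is dominated by $\prod_j|z_j|^{-\zeta}e^{-|z_j|^2/\Lambda}$ and the integral splits into a product of $N$ identical one-dimensional integrals. Passing to polar coordinates and substituting $u=|w|^2/\Lambda$ gives
\[
\int_{\mathbb{C}}|w|^{-\zeta}e^{-|w|^2/\Lambda}\,dm(w)=2\pi\int_0^\infty\rho^{\,1-\zeta}e^{-\rho^2/\Lambda}\,d\rho=\pi\,\Lambda^{\,1-\zeta/2}\,\Gamma\!\big(1-\tfrac{\zeta}{2}\big),
\]
which is finite precisely because $\zeta<2$: this is where the hypothesis enters, since the singularity $|w|^{-\zeta}$ at the origin is integrable in $\mathbb{C}$ iff $\zeta<2$. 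Multiplying the $N$ copies and cancelling the factor $\pi^N$ against $\pi^{-N}$ yields exactly the claimed inequality.

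I do not expect a serious obstacle here; the only points requiring care are (i) recording the correct normalization of the complex-Gaussian density and the vanishing of the pseudo-covariance, so that the exponent is genuinely the Hermitian form $z^{*}\Sigma^{-1}z$ rather than the real part of some bilinear form, and (ii) using the eigenvalue bound $\Sigma^{-1}\succeq\Lambda^{-1}I$ rather than attempting to diagonalize $\Sigma$ and track cross terms, which would reintroduce a coupling between the $|z_j|$ and break the product structure. It is also worth remarking that the estimate is an inequality, not an identity, exactly because we replaced $\Sigma^{-1}$ by $\Lambda^{-1}I$; equality would hold only if all eigenvalues of $\Sigma$ coincided.
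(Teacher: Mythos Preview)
Your proof is correct; this is exactly the standard argument, and the paper itself does not reprove the lemma but simply cites it from \cite{holedisk}. The density formula, the eigenvalue bound $\Sigma^{-1}\succeq\Lambda^{-1}I$, the factorization, and the Gamma-integral computation are all accurate as written.
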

\begin{lemma}[{\cite[Lemma~10]{holedisk}}]
	Let $F$ be any Gaussian Taylor series with radius of convergence $R$ and let $z_j=re\left(j/N\right)$ for $r<R$ and $j=0,\dots,N-1$. Consider the covariance matrix $\Sigma=\Sigma\left(r,N\right)$ of the random variables $F\left(z_0\right),\dots,F\left(z_{N-1}\right)$, that is,
	\begin{equation*}
	\Sigma_{jk}=\mathbb{E}\left[F\left(z_j\right)\overline{F\left(z_k\right)}\right]=\sum_{n\geq0}a_n^2r^{2n}e\left(\left(j-k\right)n/N\right).
	\end{equation*} 
	Then, the eigenvalues of $\Sigma$ are $$\lambda_m=N\cdot\sum_{n\equiv m(N)}a_n^2r^{2n}, \ \ m=0,\dots,N-1,$$ where $n\equiv m(N)$ denotes that $n$ is equivalent to $m$ modulo $N$. 
\end{lemma}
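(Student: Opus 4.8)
The plan is to prove this by recognizing that $\Sigma$ is a \emph{circulant} matrix and diagonalizing it with the discrete Fourier transform; in fact no Gaussianity is used, only linear algebra on the given matrix. First I would note that, since $e(\cdot)$ has period $1$, the entry $\Sigma_{jk}=\sum_{n\ge 0}a_n^2 r^{2n}e((j-k)n/N)$ depends only on $j-k$ modulo $N$: writing $j-k=qN+\ell$ with $\ell=(j-k)\bmod N$ one has $e((j-k)n/N)=e(qn)e(\ell n/N)=e(\ell n/N)$ because $qn\in\mathbb{Z}$. Hence, setting $c_\ell:=\sum_{n\ge0}a_n^2r^{2n}e(\ell n/N)$ for $\ell=0,\dots,N-1$ (the series converges absolutely since $r<R$), we get $\Sigma_{jk}=c_{(j-k)\bmod N}$, so $\Sigma$ is the circulant matrix with first row $(c_0,\dots,c_{N-1})$.

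Next I would check directly that the standard Fourier vectors $v^{(m)}=\big(e(jm/N)\big)_{j=0}^{N-1}$, $m=0,\dots,N-1$, are eigenvectors. Computing $(\Sigma v^{(m)})_j=\sum_{k=0}^{N-1}c_{(j-k)\bmod N}\,e(km/N)$ and substituting $\ell=(j-k)\bmod N$ (a bijection of $\{0,\dots,N-1\}$, with $k\equiv j-\ell$) gives $(\Sigma v^{(m)})_j=e(jm/N)\sum_{\ell=0}^{N-1}c_\ell\,e(-\ell m/N)$, so $v^{(m)}$ is an eigenvector with eigenvalue $\lambda_m=\sum_{\ell=0}^{N-1}c_\ell\,e(-\ell m/N)$. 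Now expand $c_\ell$, interchange the finite sum over $\ell$ with the absolutely convergent sum over $n$, and apply the orthogonality relation $\sum_{\ell=0}^{N-1}e(\ell(n-m)/N)=N\cdot\mathbbm{1}\{n\equiv m\ (N)\}$ (a geometric sum in $N$-th roots of unity, equal to $N$ when $n\equiv m$ and $0$ otherwise). This produces $\lambda_m=N\sum_{n\equiv m(N)}a_n^2r^{2n}$, exactly the claimed formula. Since the $v^{(m)}$ are pairwise orthogonal — they are, up to normalization, the columns of the unitary DFT matrix — they form a full eigenbasis of $\mathbb{C}^N$, so these $N$ numbers are \emph{all} the eigenvalues of $\Sigma$, counted with multiplicity.

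There is no real obstacle here: the only points needing a word of care are the (routine) justification of swapping the order of summation, which is immediate from absolute convergence of $\sum_{n\ge0}a_n^2r^{2n}$ when $r<R$, and the remark that the Fourier eigenvectors span $\mathbb{C}^N$ so one has identified the complete spectrum and not merely a list of eigenvalues. As consistency checks one can observe $\sum_{m=0}^{N-1}\lambda_m=N\sum_{n\ge0}a_n^2r^{2n}=N\,\Sigma_{00}=\operatorname{tr}\Sigma$, and that each $\lambda_m\ge 0$, as it must be for the covariance matrix $\Sigma$.
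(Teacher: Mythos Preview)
Your proof is correct and is the standard circulant-matrix argument. The paper does not actually supply a proof of this lemma; it is quoted verbatim from \cite[Lemma~10]{holedisk}, and the argument there is exactly the diagonalization by the discrete Fourier transform that you wrote down.
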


\begin{lemma}\label{lemma from BNPS}{\cite[Sec.~6.2]{holedisk}}\ Let $\Sigma=\Sigma\left(r,N\right)$ be the covariance matrix of the Gaussian random variables $F\left(z_0\right),\dots,F\left(z_{N-1}\right)$, $z_j=re\left(j/N\right)$, and let $\Lambda=\Lambda\left(r,N\right)$ be the largest eigen value of $\Sigma$. Then, 
 \begin{equation*}
		\Lambda\leq\begin{cases}
		\frac{C\cdot N}{\delta^{L-1}}, & L>1\\
		C\cdot N, & L\leq1\\
		\end{cases} \quad \text{and} \quad\det\Sigma\geq e^{\left(1+o\left(1\right)\right)\left(LN\log N-2\delta N^2\right)}.
		\end{equation*} 

\end{lemma}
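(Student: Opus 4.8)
The plan is to read off both estimates directly from the exact eigenvalue formula of the preceding lemma, combined with the coefficient asymptotics $a_n^2=\binom{-L}{n}\simeq_L n^{L-1}$ for $n\ge1$ (and $a_0^2=1$). Set $b_n:=a_n^2r^{2n}=\binom{-L}{n}r^{2n}$, so that $\lambda_m=N\sum_{n\equiv m(N)}b_n$; hence $\Lambda=N\max_{0\le m<N}S_m$ with $S_m:=\sum_{n\equiv m(N)}b_n$, while $\det\Sigma=\prod_{m=0}^{N-1}\lambda_m$. Throughout, in the regime of interest we have $1-r\simeq\delta$ and $N\gg\delta^{-1}\log\delta^{-1}$, so that $r^N\to0$ and $\delta N\to\infty$; these two facts drive the whole argument.

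\emph{Estimate for $\Lambda$.} The ratio $b_{n+1}/b_n=\frac{L+n}{n+1}r^2=r^2\big(1+\tfrac{L-1}{n+1}\big)$ is monotone in $n$ and tends to $r^2<1$, so there is a threshold $n_1=n_1(L,\delta)$ with $b_{n+1}/b_n\le r$ for every $n\ge n_1$; solving $\tfrac{L+n}{n+1}\le1/r$ shows $n_1\lesssim_L\delta^{-1}$ (and when $L\le1$ the ratio is always $\le r^2$, so one may take $n_1=1$). Since $N\gg n_1$, each residue class $\{m,m+N,m+2N,\dots\}$ contains at most one index below $n_1$, and for any two of its members above $n_1$ one has $b_{n+N}/b_n\le r^N\le\tfrac12$; summing this geometric tail gives $S_m\le3\max_n b_n$. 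Finally $\max_n b_n\simeq_L1$ when $L\le1$ (the sequence is then decreasing, with maximum $b_0=1$), whereas for $L>1$ maximizing $n^{L-1}r^{2n}$ produces a maximizer $n^\ast\simeq_L\delta^{-1}$ with $\max_n b_n\simeq_L(n^\ast)^{L-1}\simeq_L\delta^{1-L}$. Multiplying by $N$ yields $\Lambda\le CN$ for $L\le1$ and $\Lambda\le C_LN\delta^{1-L}$ for $L>1$.

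\emph{Lower bound for $\det\Sigma$.} Keeping only the $n=m$ term in each eigenvalue, $\lambda_m\ge Nb_m=N\binom{-L}{m}r^{2m}$, hence
\begin{equation*}
\det\Sigma\ \ge\ N^N\Big(\prod_{m=0}^{N-1}\binom{-L}{m}\Big)r^{N(N-1)} .
\end{equation*}
Here $r^{N(N-1)}=e^{N(N-1)\log r}=e^{-(1+o(1))2\delta N^2}$, since $\log r=-2\delta(1+o(1))$ and $N(N-1)=(1+o(1))N^2$; by Stirling $\log\prod_{m=1}^{N-1}\binom{-L}{m}=(L-1)\log\big((N-1)!\big)+O(N)=(L-1)N\log N+O(N)$; and $\log N^N=N\log N$. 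Adding these, $\log\det\Sigma\ge LN\log N-2\delta N^2+O(N)+o(\delta N^2)$, and since $\delta N^2\gg N$ and $\delta N^2\gtrsim N\log N$ in the relevant range, the error terms are negligible and we obtain $\det\Sigma\ge e^{(1+o(1))(LN\log N-2\delta N^2)}$.

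The main obstacle is the $\Lambda$-estimate when $L>1$: one must pin down the location ($n^\ast\simeq_L\delta^{-1}$) and height ($\simeq_L\delta^{1-L}$) of the peak of $(b_n)$ and, crucially, invoke the hypothesis $N\gtrsim\delta^{-1}\log\delta^{-1}$ to guarantee that each arithmetic progression $n\equiv m(N)$ meets the ``fat'' part of $(b_n)$ in at most one point. Without this, the class sum $S_m$ could be comparable to the full sum $\sum_n b_n=\sigma_f(r)^2\simeq_L\delta^{-L}$, which would overshoot the claimed bound by the factor $N\delta$.
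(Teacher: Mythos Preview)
Your proof is correct and follows essentially the same strategy as the paper. For the determinant bound, both you and the paper keep only the $n=m$ term of each eigenvalue and multiply, obtaining $N^N\prod_m\binom{-L}{m}r^{2m}$; the Stirling estimate is identical.

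For the $\Lambda$ bound there is a small but genuine organizational difference worth noting. The paper bounds the ratio of \emph{consecutive terms within each arithmetic progression}, $b_{m+(j+1)N}/b_{m+jN}\le\tfrac12$, and then sums the geometric series starting from the first term $b_m$; this gives $\lambda_m\le CN\,b_m$, and since $b_m\le1$ when $L\le1$ the claim follows. That ratio argument, however, is not uniform in $m$ when $L>1$ (for small $m$ and $j$ the factor $(1+N/(m+jN))^{L-1}$ can be large), which is why the paper defers that case to the cited reference. Your route---first locating $\max_n b_n$ and then showing each $S_m$ is at most $3\max_n b_n$ via a threshold $n_1\lesssim_L\delta^{-1}$ combined with $N\gg n_1$---handles both regimes $L\le1$ and $L>1$ at once, and your closing remark correctly identifies why the hypothesis $N\gtrsim\delta^{-1}\log\delta^{-1}$ is the crux for $L>1$. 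So the two arguments are close cousins, with yours slightly more uniform.

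One cosmetic point: your line ``$\log r=-2\delta(1+o(1))$'' is only correct because in the application the radius is $r_0=1-2\delta$ rather than $r=1-\delta$; your earlier remark ``$1-r\simeq\delta$'' signals you are aware of this, but it would read more cleanly if you tracked the constant (the paper itself is a bit loose here, writing $-4\delta N^2$ in its proof while stating $-2\delta N^2$ in the lemma).
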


\begin{proof}[Proof of \Cref{lemma from BNPS}]	
		We begin with a lower bound for $\text{det}\Sigma$. In our case $a_m^2=\binom{-L}{m}\geq c\left(m+1\right)^{L-1}$, hence 
		\begin{align*}
		\text{det}\Sigma&=\prod_{m=0}^{N-1}\lambda_m\left(\Sigma\right)\geq\prod_{m=0}^{N-1}Na_m^2\left(1-2\delta\right)^{2m}
		\geq N^N\left(1-2\delta\right)^{N(N-1)}\prod_{m=0}^{N-1}c\left(m+1\right)^{L-1}
		\\&
		=\left(cN\right)^N\left(N!\right)^{L-1}\left(1-2\delta\right)^{N\left(N-1\right)}
		=\exp\left(-4\delta N^2+LN\log N+O\left(N\right)\right)
		\end{align*}
		For the upper bound for $\Lambda$,
		assume $L\leq1$ (the case $L>1$ appears in \cite{holedisk}).
		Since $$a_{m+jN}^2=\binom{-L}{m+jN}\sim_L \left(m+jN\right)^{L-1} \ \text{as} \ n\rightarrow\infty,$$ we  have for small $\delta$ and $j\geq0$,
		\begin{align*}
		\frac{a_{m+\left(j+1\right)N}^2r_0^{2\left(m+\left(j+1\right)N\right)}}{a_{m+jN}^2r_0^{2\left(m+jN\right)}}&\leq D_L\left(\frac{m+(j+1)N}{m+jN}\right)^{L-1} r_0^{2N}
		\\&
		=D_L\left(1+\frac{N}{m+jN}\right)^{L-1} r_0^{2N}
		\\&\leq e^{-4\delta N}.
		\end{align*}
		Take $r$ close enough to $1$, such that   
		\begin{equation*}
		\frac{a_{m+(j+1)N}^2r_0^{2(m+(j+1)N)}}{a_{m+jN}^2r_0^{2(m+jN)}}\leq\frac{1}{2}.
		\end{equation*}
		So,
		\begin{equation*}
		\lambda_m\left(\Sigma\right)\leq C\cdot N\cdot a_m^2r_0^{2m}\sum_{j\geq0}2^{-j}=C\cdot N\cdot a_m^2r_0^{2m}\leq C\cdot N.
		\end{equation*}
		(where the last inequality holds because $a_m^2\leq1$).
		\end{proof}
	Now we readily complete the proof of \Cref{lemma:avreage with beta},and therefore, of \Cref{Theorem: overcrowding}
Set $\zeta:=2-\delta$. Then, using $\Gamma\left(z\right)\sim 
(\frac{1}{z})$ as $z\rightarrow 0$, $\Gamma\left(1-\frac{1}{2}\zeta\right)=\Gamma\left(\frac{\delta}{2}\right)\leq\frac{2}{\delta}=\frac{2}{1-r}$
we get, 
\begin{align*}
\mathbb{E}\big[\prod_{j=1}^{N}|f(r_0e^{i\theta_j}\cdot e^{i\beta})|^{-\zeta}\big]&\leq e^{4\delta N^2-LN\log N+O\left(N\right)}\left(\Lambda^{\frac{\delta}{2}}\cdot\frac{1}{\delta}\right)^N.
\end{align*}
Hence,

\begin{equation}\label{bound expectation}
\begin{split}
 \log\mathbb{E}\left[\prod_{j=1}^{N}|f(r_0e^{i\theta_j}\cdot e^{i\beta})|^{-\zeta}\right]&\leq4\delta N^2-LN\log N+\delta N\log\Lambda+N\log\frac{1}{1-r}+O\left(N\right)
\\&
\leq 4\delta N^2+B_L\delta N\log(N)+N\log\frac{1}{1-r}+O\left(N\right).
\end{split}
\end{equation}
Recall that $N=\eta \left[V\right]$ with a small constant $\eta$ and that $V$
satisfies assumption \eqref{assumption}.
Then the expression on the RHS of \eqref{bound expectation} is bounded by 
\begin{equation*}
\left(4\eta^2+\frac{\max\{L+1,2\}}{C_L}\eta\right)\left(1-r\right)V^2
\end{equation*}
assuming $r$ is sufficiently close to $1$. 
Also, recall that $$d_L=-\frac{1}{2}+\frac{a_L}{C_L}$$ with fixed $a_L$. Therefore, since $\zeta\geq1$ and $C_L$ is large, we may choose $\eta$ sufficiently small and get 
\begin{align*}
\log\mathbb{P}\left[\sum_{j=1}^{N}\log\left|f\left(r_0e^{i\theta_j}\cdot e^{i\beta}\right)\right|\leq
 -\tfrac{d_L}{2}N\left(1-r\right)V\right]\leq-\tilde{c}_L\left(1-r\right)V^2.
\end{align*}
\end{proof}

\appendix

\section{The Case $L=1$ - proof of \Cref{Theorem: babycase}}
\label{proofbabycase}
Recall that for $L=1$, the random variable $n_1\left(r\right)$ is distributed as a sum of independent Bernoulli random variables (see\cite{Determinantal}),
\begin{equation}
\begin{split}
\mu_r & =\mathbb{E}\left[\sum_{k=1}^{\infty}X_k\right]=\sum_{k=1}^{\infty}r^{2k}=\frac{r^2}{1-r^2}\ \underset{r\uparrow 1}{\sim} \ \frac{1}{2}\cdot\frac{1}{1-r},\\
v_1\left(r\right) & =v_1\left(r\right)=\Var\left[\sum_{k=1}^{\infty}X_k\right]=\sum_{k=1}^{\infty}r^{2k}\left(1-r^{2k}\right)=\frac{r^2}{1-r^4} \ \underset{r\uparrow 1}{\sim} \ \frac{1}{4}\cdot\frac{1}{1-r}.
\end{split}
\end{equation}
Here, it is not hard to calculate the moment generating function, hence we can use large-deviations techniques to find the \emph{rate function} to be defined in \eqref{rate function} and get the rate of decay of the large deviations probability (see \cite[Section~2.3]{Deviations_book} )

Now, in order to prove \Cref{Theorem: babycase}, we set: \begin{equation*}
Z_r:=\frac{1}{v_1\left(r\right)^{\alpha}}\sum_{k=1}^{\infty}(X_k-r^{2k})\ ,$$
$$\epsilon_r:=\frac{1}{v_1\left(r\right)^{2\alpha-1}} \ ,
\end{equation*} where $r\in(0,1)$. Let $\Lambda_r\left(\cdot\right)$ be the logarithmic moment generating function of $Z_r$ i.e. $\Lambda_r(\lambda)=\log\mathbb{E}\left[e^{\lambda Z_r}\right]$ and 
\begin{equation}\label{limiting}
\Lambda\left(\lambda\right):= \underset{r\uparrow1}{\lim} \epsilon_r\Lambda_r\left(\epsilon_r^{-1}\lambda\right).
\end{equation}
The rate function is defined as follows
\begin{equation}\label{rate function}
\Lambda^*\left(x\right)=\underset{\lambda\in\mathbb{R}}{\sup}\left\{\lambda x-\Lambda\left(\lambda\right)\right\}
\end{equation}

\subsection{Calculation of $\Lambda\left(\lambda\right)$}
\begin{lemma}\label{Lemma:Calculation moment genertaing}
	For $\alpha>\frac{1}{2}$, the limit \eqref{limiting} exists, and differentiable for every $\lambda\in\mathbb{R}$.
	\begin{enumerate}
		\item For $\frac{1}{2}<\alpha<1$,
		\begin{equation*}
		\Lambda\left(\lambda\right) =\frac{\lambda^2}{2}
		\end{equation*}
		for every $\lambda \in \mathbb{R}.$
		\item For $\alpha=1$ 
		\begin{equation*}
		\Lambda\left(\lambda\right)=-2\lambda-2\Li_2\left(1-e^\lambda\right)
		\end{equation*}
		for every $\lambda\in\mathbb{R}$.
		\item For $\alpha>1$
		\begin{equation*}
		\Lambda(\lambda)=
		\begin{cases} 
		\lambda^2 & \lambda>0, \\
		0 & \lambda\leq0 .\\
		\end{cases}
		\end{equation*}
	\end{enumerate}
\end{lemma}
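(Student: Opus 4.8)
The plan is to compute $\Lambda_r$ explicitly from the Bernoulli product structure and then pass to the limit regime by regime. Since $n_1(r)=\sum_{k\ge 1}X_k$ with the $X_k$ independent and $\mathbb{P}[X_k=1]=r^{2k}$, independence gives $\log\mathbb{E}\bigl[e^{t\,n_1(r)}\bigr]=\sum_{k\ge1}\log\bigl(1+r^{2k}(e^{t}-1)\bigr)$ for every real $t$, the series converging absolutely (each factor is positive). Recalling $\epsilon_r^{-1}=v_1(r)^{2\alpha-1}$ and $Z_r=v_1(r)^{-\alpha}\sum_k(X_k-r^{2k})$, the coefficient of each $X_k$ in $\epsilon_r^{-1}\lambda\,Z_r$ is precisely $\beta_r:=\lambda\,v_1(r)^{\alpha-1}$, so with
\[
g_\beta(x):=-\beta x+\log\bigl(1+x(e^{\beta}-1)\bigr)
\]
one obtains the identity $\epsilon_r\,\Lambda_r(\epsilon_r^{-1}\lambda)=v_1(r)^{1-2\alpha}\sum_{k\ge1}g_{\beta_r}(r^{2k})$. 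Note that $g_\beta$ is concave on $[0,1]$ with $g_\beta(0)=g_\beta(1)=0$, hence $g_\beta\ge 0$ there and $\sup_{[0,1]}|g_\beta|=g_\beta(x^{\ast})=O(1+|\beta|)$, the maximizer satisfying $x^{\ast}\approx 1/\beta$ when $|\beta|$ is large.

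The key analytic step is to replace this Riemann-type sum by an integral. Substituting $x=r^{2k}$, so that $dk=-dx/(2x\log(1/r))$, gives
\[
\sum_{k\ge1}g_{\beta}(r^{2k})=\frac{1}{2\log(1/r)}\int_0^1\frac{g_{\beta}(x)}{x}\,dx+E_r ,
\]
and the integral is elementary: the substitution $u=-(e^{\beta}-1)x$ together with the definition $\Li_2(z)=-\int_0^z\frac{\log(1-u)}{u}\,du$ yields
\[
\int_0^1\frac{g_{\beta}(x)}{x}\,dx=-\beta-\Li_2\bigl(1-e^{\beta}\bigr),
\]
which is legitimate since $1-e^{\beta}<1$, away from the branch cut of $\Li_2$. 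Because $g_{\beta_r}(r^{2k})$ is unimodal in $k$, the Riemann-sum error obeys $|E_r|=O(1+|\beta_r|)$, hence $\epsilon_r|E_r|=O\bigl(v_1(r)^{1-2\alpha}+|\lambda|\,v_1(r)^{-\alpha}\bigr)\to 0$ for $\alpha>\tfrac12$. Since $-\log r\sim 1-r=\delta$ and $v_1(r)\sim\tfrac14\delta^{-1}$, we have $1/(2\log(1/r))\sim 2v_1(r)$, and therefore
\[
\Lambda(\lambda)=\lim_{r\uparrow1}2\,v_1(r)^{2-2\alpha}\Bigl(-\beta_r-\Li_2\bigl(1-e^{\beta_r}\bigr)\Bigr),\qquad \beta_r=\lambda\,v_1(r)^{\alpha-1}.
\]

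It then remains to evaluate this limit, and the three cases arise according to the behaviour of $\beta_r$. For $\tfrac12<\alpha<1$ we have $\beta_r\to0$, and combining $\Li_2(w)=w+\tfrac14 w^2+\cdots$ with $1-e^{\beta}=-\beta-\tfrac12\beta^2+\cdots$ gives $-\beta-\Li_2(1-e^{\beta})=\tfrac14\beta^2+O(\beta^3)$, so $\Lambda(\lambda)=\lim 2v_1(r)^{2-2\alpha}\cdot\tfrac14\lambda^2 v_1(r)^{2\alpha-2}=\tfrac12\lambda^2$. For $\alpha=1$ we have $\beta_r\equiv\lambda$ and $v_1(r)^{2-2\alpha}\equiv1$, so $\Lambda(\lambda)=-2\lambda-2\Li_2(1-e^{\lambda})$ with no passage to the limit needed. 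For $\alpha>1$ and $\lambda>0$, $\beta_r\to+\infty$, and the inversion formula $\Li_2(z)+\Li_2(1/z)=-\tfrac{\pi^2}{6}-\tfrac12\log^2(-z)$ ($z<0$) gives $\Li_2(1-e^{\beta_r})=-\tfrac12\beta_r^2-\tfrac{\pi^2}{6}+o(1)$, hence $-\beta_r-\Li_2(1-e^{\beta_r})=\tfrac12\beta_r^2(1+o(1))$ and $\Lambda(\lambda)=\lambda^2$; for $\alpha>1$ and $\lambda\le0$ we have $\beta_r\to-\infty$ (or $\beta_r=0$), $\Li_2(1-e^{\beta_r})\to\Li_2(1)=\pi^2/6$ (with error $O(|\beta_r|e^{\beta_r})$), and $2v_1(r)^{2-2\alpha}\bigl(-\beta_r-\tfrac{\pi^2}{6}+o(1)\bigr)=O\bigl(v_1(r)^{1-\alpha}+v_1(r)^{2-2\alpha}\bigr)\to0$. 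Differentiability is then immediate: on $\{\tfrac12<\alpha<1\}$ and on $\{\alpha>1\}$ the limits are $\tfrac12\lambda^2$ and $\lambda^2\mathbf{1}_{\{\lambda>0\}}$, both $C^1$ (the right derivative at $0$ vanishes), while for $\alpha=1$ the function $\Li_2$ is real-analytic on $(-\infty,1)$ and $1-e^{\lambda}<1$ for all real $\lambda$, so $\Lambda$ is smooth.

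The step I expect to be the main obstacle is making the Riemann-sum-to-integral passage rigorous \emph{uniformly in $r$} when $\alpha>1$, where $\beta_r\to\infty$: there the integrand $g_{\beta_r}(x)/x$ is large and steep near its peak $x^{\ast}\approx 1/\beta_r$, and $g_{\beta_r}(r^{2k})$ is unimodal in $k$ with $\Theta(\beta_r/\delta)$ indices on its increasing branch, so the bound $|E_r|=O(1+|\beta_r|)$ must be obtained from the monotonicity of $g_{\beta_r}$ on each side of its maximum rather than from a cruder ``finitely many exceptional terms'' estimate that would suffice only when $\beta_r$ stays bounded; one also needs the leftover piece $\int_{r^2}^{1}g_{\beta_r}(x)x^{-1}\,dx$ to be negligible, which follows from concavity of $g_{\beta_r}$ near $x=1$. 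The remaining ingredients --- the exact value of $\int_0^1 g_\beta(x)x^{-1}\,dx$ and the standard expansions of $\Li_2$ near $0$, near $1$, and near $-\infty$ --- are routine.
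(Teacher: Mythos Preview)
Your proposal is correct and follows essentially the same route as the paper: compute $\Lambda_r$ from the Bernoulli product, convert the sum $\sum_k\log(1+Tr^{2k})$ (with $T=e^{\beta_r}-1$) into a dilogarithm via the substitution $x=r^{2k}$, and evaluate the limit case by case using the expansions of $\Li_2$ near $0$, $1$, and $-\infty$. The only organizational difference is that you fold the linear term $-\beta_r r^{2k}$ into $g_{\beta_r}$ and invoke unimodality for the Riemann-sum error, whereas the paper sums that linear piece exactly to $-\mu_r\beta_r$ and uses plain monotonicity of $k\mapsto\log(1+Tr^{2k})$ to get the same $O(|\beta_r|)$ bound; this is a cosmetic distinction, not a different argument.
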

In order to apply the \emph{G\"artner-Ellis} theorem \cite[Section~2.3]{Deviations_book}, let us find the rate function for each case. 
\subsection{The \emph{Fenchel-Legendre transform} $\Lambda^{*}\left(x\right)$} 
Using \Cref{Lemma:Calculation moment genertaing}, it is easy to compute $\Lambda^{*}\left(x\right)$ in case $\frac{1}{2}<\alpha<1$ or $\alpha>1$, while the case $\alpha=1$ (which is more complicated) follows from \Cref{lemma: alpha=1} below.
\begin{itemize}
	\item For $\frac{1}{2}<\alpha<1$, \begin{equation}
	\Lambda^{*}\left(x\right)=\frac{x^2}{2}.
	\end{equation}  
	\item For $\alpha=1$ (\Cref{lemma: alpha=1}),
	\begin{equation}
	\label{Alpha=1}
	\Lambda^*\left(x\right)=\begin{cases}
	+\infty & x<-2,\\
	\frac{\pi^2}{3} & x=-2,\\
	h\left(\lambda_{-1}\left(x\right),x\right) & -2<x<0,\\
	h\left(\lambda_0\left(x\right),x\right) & x\geq0,\\
	\end{cases}
	\end{equation} where
	\begin{equation*}
	h\left(y,x\right)=y(x+2)+2\Li\left(1-e^y\right),
	\end{equation*}
	\begin{equation*}
	\lambda_{j}\left(x\right)=\frac{x+2}{2}+W_{j}\left(-\frac{x+2}{2}e^{-\frac{x+2}{2}}\right), \quad j\in\{-1,0\}.
	\end{equation*} $W_{j}$, $j\in\{-1,0\}$ is the principal$/$lower branches of Lambert $W$ function respectively.
	\begin{figure}[H]
		\centering
		\includegraphics{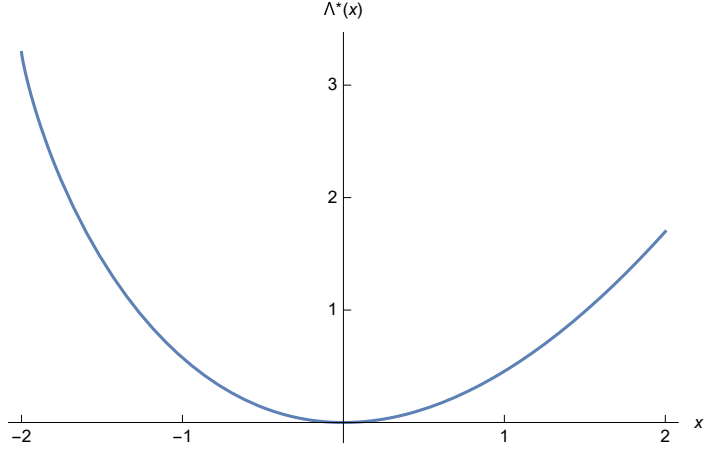}
		\caption{The rate function $\Lambda^*$ for $\alpha=1$}
		\label{fig: rate}
	\end{figure}
	Note that because $\Lambda^*$ is not symmetric the probability of the event $\left|n_1\left(r\right)-\mu_r\right|\geq v_1\left(r\right)$ is dominated by the event $n_1\left(r\right)\geq\mu_r+tv_1\left(r\right)$. Furthermore, the lower bound for the special case of the hole event when $t=-2, \alpha=1$ does not follow from the theory of large deviations, but can be computed directly (see \cite[Section~5.1]{gaf_book}). 
	\item For $\alpha>1$, 
	\begin{equation}
	\Lambda^{*}\left(x\right)=\begin{cases}
	\frac{x^2}{4} & x\geq0,\\
	+\infty & x<0.\\
	\end{cases}
	\end{equation}  
\end{itemize}
\subsection{Proof of \Cref{Theorem: babycase}}
Next, assuming \Cref{Lemma:Calculation moment genertaing}, \Cref{Theorem: babycase} follows from the Gartner-Ellis theorem:
\begin{proof} Notice that
	\begin{align*}
	\mathbb{P}\left[\left|\sum_{k=1}^{\infty}X_k-\mu_r\right|\geq t\cdot v_1\left(r\right)^{\alpha}\right]&=\mathbb{P}\left[\left|\sum_{k=1}^{\infty}\left(X_k-r^{2k}\right)\right|\geq t\cdot v_1\left(r\right)^{\alpha}\right]
	\\ &=\mathbb{P}\left[\left|Z_r\right|\geq t\right]=\mathbb{P}\left[Z_r\in\left(-\infty,-t\right]\cup\left[t,\infty\right)\right]
	\end{align*}
	
	\begin{itemize}
		\item For $\frac{1}{2}<\alpha<1$, it holds that
		\begin{equation*}
		\underset{x\in\left(-\infty,-t\right)\cup\left(t,\infty\right)}{\inf}\Lambda^*\left(x\right)=\underset{x\in\left(-\infty,-t\right]\cup\left[t,\infty\right)}{\inf}\tfrac{1}{2}x^2=\frac{t^2}{2}
		\end{equation*} thus,
		\begin{equation*}
		\underset{r\uparrow1}{\lim}\epsilon_r\log\mu_r((-\infty,-t]\cup[t,\infty))=-\frac{t^2}{2},
		\end{equation*}
		whence,
		\begin{equation*}
		-\log\mathbb{P}\left[\left|\sum_{k=1}^{\infty}X_k-\mu_r\right|\geq v_1\left(r\right)^{\alpha}\right]\sim {\frac{t^2}{2}v_1\left(r\right)^{2\alpha-1}} ,\quad r\uparrow1.
		\end{equation*}  
		\item For $\alpha=1$, it holds that
		\begin{equation*}
		\underset{x\in\left(-\infty,-t\right)\cup\left(t,\infty\right)}{\inf}\Lambda^*\left(x\right)=\underset{x\in\left(-\infty,-t\right]\cup\left[t,\infty\right)}{\inf}\Lambda^*\left(x\right)=\min\left\{\Lambda^*\left(-t\right),\Lambda^*\left(t\right)\right\}=\Lambda^*\left(t\right)
		\end{equation*}
		where the last equality also follows from \Cref{lemma: alpha=1}.
		Thus,
		\begin{equation*}
		\underset{r\uparrow1}{\lim}\epsilon_r\log\mu_r\left(\left(-\infty,-t\right]\cup\left[t,\infty\right)\right)=-\Lambda^*\left(t\right)=-c(t),
		\end{equation*}
		whence,
		\begin{equation*}
		-\log\mathbb{P}\left[\left|\sum_{k=1}^{\infty}X_k-\mu_r\right|\geq t\cdot v_1\left(r\right)^{\alpha}\right]\sim {\frac{c(t)}{4}\cdot\left(\frac{1}{1-r}\right)^{2\alpha-1}} ,\quad r\uparrow1.
		\end{equation*}
		\item For $\alpha>1$, it holds that $\underset{x\in\left(-\infty,-t\right)\cup\left(t,\infty\right)}{\inf}\tfrac{1}{4}x^2=\frac{t^2}{4}$
		thus,
		\begin{equation*}
		\underset{r\uparrow1}{\lim}\epsilon_r\log\mu_r([t,\infty))=-\frac{t^2}{4}, \end{equation*}
		whence,
		\begin{equation*}
		-\log\mathbb{P}\left[\left|\sum_{k=1}^{\infty}X_k-\mu_r\right|\geq t\cdot v_1\left(r\right)^{\alpha}\right]\sim {\frac{t^2}{4}v_1\left(r\right)^{2\alpha-1}} ,\quad r\uparrow1.
		\end{equation*} 
	\end{itemize}
\end{proof}

Finally, we turn back to the calculation of the logarithmic moment generating function:
\begin{proof}[Proof of \Cref{Lemma:Calculation moment genertaing}]
	Let $\lambda\in\mathbb{R}$,
	\begin{align*}
	\Lambda_r\left(\epsilon_r^{-1}\lambda\right)&=\Lambda_r\left(v_1\left(r\right)^{2\alpha-1}\lambda\right)=\log\mathbb{E}\left[e^{v_1\left(r\right)^{2\alpha-1}\lambda Z_r}\right]
	\\&
	=\log\mathbb{E}\left[e^{v_1\left(r\right)^{2\alpha-1}\lambda \frac{1}{v_1\left(r\right)^{\alpha}}\sum_{k=1}^{\infty}\left(X_k-r^{2k}\right)}\right]
	\\&=\log\mathbb{E}\left[e^{v_1\left(r\right)^{\alpha-1}\lambda \sum_{k=1}^{\infty}\left(X_k-r^{2k}\right)}\right]=\sum_{k=1}^{\infty}\log\mathbb{E}\left[e^{v_1\left(r\right)^{\alpha-1}\lambda\left(X_k-r^{2k}\right)}\right]
	\\&
	=\sum_{k=1}^{\infty}\log\left(r^{2k} e^{v_1\left(r\right)^{\alpha-1}\lambda\left(1-r^{2k}\right)}+\left(1-r^{2k}\right) e^{v_1\left(r\right)^{\alpha-1}\lambda\left(-r^{2k}
		\right)}\right)
	\\&
	=\sum_{k=1}^{\infty}\log\left(e^{v_1\left(r\right)^{\alpha-1}\lambda\left(-r^{2k}\right)}\left(r^{2k}e^{v_1\left(r\right)^{\alpha-1}\lambda}+1-r^{2k}\right)\right)
	\\&
	=\sum_{k=1}^{\infty}v_1\left(r\right)^{\alpha-1}\lambda\left(-r^{2k}\right)+\sum_{k=1}^{\infty}\log\left(1+r^{2k}\left(e^{v_1\left(r\right)^{\alpha-1}\lambda}-1\right)\right)
	\\&
	=-\mu_rv_1\left(r\right)^{\alpha-1}\lambda+\sum_{k=1}^{\infty}\log\left(1+r^{2k}\left(e^{v_1\left(r\right)^{\alpha-1}\lambda}-1\right)\right)=(\star).
	\end{align*}
	
	Put $T:=e^{v_1\left(r\right)^{\alpha-1}\lambda}-1$ and note that $-1<T<+\infty$ , $\text{sgn}\left(T\right)=\text{sgn}\left(\lambda\right)=\text{sgn}\left(1+Tr^{2k}\right)$. We look at the map $k\rightarrow\log\left(1+Tr^{2k}\right)$ on $\left[0,\infty\right)$ and see that:
	\begin{itemize}[label={--}]
		\item For $\lambda>0$ the mapping decreases and its range is between $\log\left(1+T\right)=v_1\left(r\right)^{\alpha-1}\lambda$ to $0$.
		\item For $\lambda<0$ the mapping increases and its range is between $v_1\left(r\right)^{\alpha-1}\lambda$ to $0$.  
	\end{itemize} 
	Hence, we can write
	\begin{equation*}
	\sum_{k=1}^{\infty}\log\left(1+Tr^{2k}\right)=\int_{0}^{\infty}\log\left(1+Tr^{2x}\right)dx+O\left(v_1\left(r\right)^{\alpha-1}\right),
	\end{equation*} 
	and using the change of variables
	$u=-Tr^{2x},\ dx=\frac{1}{2\log r}\frac{du}{u}$ we get
	\begin{align*}
	\sum_{k=1}^{\infty}\log\left(1+Tr^{2k}\right)&=-\frac{1}{2\log r}\int_{0}^{-T}\frac{\log\left(1-u\right)}{u}du+O\left(v_1\left(r\right)^{\alpha-1}\right) \\
	&=\frac{1}{2\log r}\Li_2\left(-T\right)+O\left(v_1\left(r\right)^{\alpha-1}\right).
	\end{align*}
	Hence we get,
	\begin{equation*}
	(\star)=-\mu_rv_1\left(r\right)^{\alpha-1}\lambda+\frac{1}{2\log r}\Li_2\left(-T\right)+O\left(v_1\left(r\right)^{\alpha-1}\right),
	\end{equation*} 
	therefore,
	\begin{equation*} \epsilon_r\Lambda_r\left(\epsilon_r^{-1}\lambda\right)=-\mu_rv_1\left(r\right)^{-\alpha}\lambda+\frac{v_1\left(r\right)^{-2\alpha+1}}{2\log r}\Li_2\left(-T\right)+o(1).
	\end{equation*}
	Finally, all that's left is to calculate
	\begin{equation*}
	\lim_{r\uparrow1}\left(-\mu_rv_1\left(r\right)^{-\alpha}\lambda+\frac{v_1\left(r\right)^{-2\alpha+1}}{2\log r}\Li_2\left(-T\right)\right).
	\end{equation*} 
	We split the calculation into three cases based on the values of $\alpha$ and use the asymptotic properties of the $\text{Li}_2$ function that we noted in \Cref{special functions}:
	\begin{enumerate}
		\item For $\frac{1}{2}<\alpha<1$, $T\rightarrow0$ when $r\uparrow1$, hence $T\sim v_1\left(r\right)^{\alpha-1}\lambda$. Thus in this case 
		\begin{equation*}
		\Li_2\left(-T\right)\sim v_1\left(r\right)^{\alpha-1}\lambda+\frac{1}{4}v_1\left(r\right)^{2\alpha-2}\lambda^2,
		\end{equation*}
		therefore,
		\begin{align*}
		&-\mu_rv_1\left(r\right)^{-\alpha}\lambda+\frac{v_1\left(r\right)^{-2\alpha+1}}{2\log r}\Li_2\left(-T\right)
		\\&\sim-\mu_rv_1\left(r\right)^{-\alpha}\lambda-\frac{v_1\left(r\right)^{-\alpha}}{2\log r}\lambda-\frac{v_1\left(r\right)^{-2\alpha+1}}{2\log r}\cdot \frac{v_1\left(r\right)^{2\alpha-2}\lambda^2}{4}
		\\&
		\sim-\lambda v_1\left(r\right)^{-\alpha}\left(\mu_r+\frac{1}{2\log r}\right)+\frac{\lambda^2}{2}
		\sim\frac{\lambda^2}{2} ,\quad \text{as} \ r\uparrow 1.
		\end{align*}
		\item For $\alpha=1$ it follows immediately that  $\Lambda\left(\lambda\right)=-2\lambda-2\Li_2\left(1-e^\lambda\right)$.
		\item For $\alpha>1$, 
		\begin{itemize}
			\item If $\lambda<0$, $T\rightarrow-1$ when $r\uparrow1$, hence $\Li_2\left(-T\right)\rightarrow \Li_2\left(1\right)=\frac{\pi^2}{6}$. Thus
			\begin{equation*}
			\lim_{r\uparrow 1}-\mu_rv_1\left(r\right)^{-\alpha}\lambda+\frac{v_1\left(r\right)^{-2\alpha+1}}{2\log r}\Li_2\left(-T\right)
			=\lim_{r\uparrow 1}-2v_1\left(r\right)^{1-\alpha}-2v_1\left(r\right)^{2-2\alpha}\frac{\pi^2}{6}=0.
			\end{equation*}
			\item If $\lambda>0$, $T\rightarrow+\infty$ when $r\uparrow1$ hence $T\sim e^{v_1\left(r\right)^{\alpha-1}}$. So in this case
			\begin{equation*}
			\Li_2\left(z\right)\sim -\frac{1}{2}\log^2\left(T\right)\sim -\frac{1}{2}v_1\left(r\right)^{2\alpha-2}\lambda^2,
			\end{equation*}
			thus
			\begin{align*}
			&\lim_{r\uparrow 1}-\mu_rv_1\left(r\right)^{-\alpha}\lambda+\frac{v_1\left(r\right)^{-2\alpha+1}}{2\log r}\Li_2\left(-T\right)&
			\\  &=\lim_{r\uparrow 1}-2v_1\left(r\right)^{1-\alpha}\lambda-\left(2v_1\left(r\right)^{2-2\alpha}\right)\left(-\frac{1}{2}v_1\left(r\right)^{2\alpha-2}\lambda^2\right)
			=\lambda^2.
			\end{align*}
			\item For $\lambda=0$, $\epsilon_r\Lambda_r\left(\epsilon_r^{-1}\lambda\right)=0$,
		\end{itemize}
	\end{enumerate}
	proving the claim.
\end{proof}
During the proof we used the formula for $\Lambda^{*}$ in the case $\alpha=1$, we calculate it as follows: 
\begin{lemma}
	\label{lemma: alpha=1}
	\begin{equation*}
	\Lambda^*\left(x\right)=\begin{cases}
	+\infty & x<-2,\\
	\frac{\pi^2}{3} & x=-2,\\
	h\left(\lambda_{-1}\left(x\right),x\right) & -2<x<0,\\
	h\left(\lambda_0\left(x\right),x\right) & x\geq0,\\
	\end{cases}
	\end{equation*}
	where
	\begin{equation*}
	h\left(y,x\right)=y(x+2)+2\Li\left(1-e^y\right),
	\end{equation*}
	\begin{equation*}
	\lambda_{j}\left(x\right)=\frac{x+2}{2}+W_{j}\left(-\frac{x+2}{2}e^{-\frac{x+2}{2}}\right), \quad j\in\{-1,0\}.
	\end{equation*} $W_{j}$, $j\in\{-1,0\}$ is the principal$/$lower branches of Lambert $W$ function respectively. 
\end{lemma}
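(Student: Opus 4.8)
The plan is to compute the Fenchel--Legendre transform $\Lambda^*(x)=\sup_{\lambda\in\mathbb R}\{\lambda x-\Lambda(\lambda)\}$ of $\Lambda(\lambda)=-2\lambda-2\Li_2(1-e^\lambda)$ by a first-order analysis, treating $x=-2$ as a threshold and inverting the critical-point equation through the Lambert $W$ function.

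First I would record the analytic properties of $\Lambda$. Using $\frac{d}{dz}\Li_2(z)=-\frac{\log(1-z)}{z}$ and $1-(1-e^\lambda)=e^\lambda$ one gets
\[
\Lambda'(\lambda)=-2+\frac{2\lambda e^\lambda}{e^\lambda-1}=-2+\frac{2\lambda}{1-e^{-\lambda}},\qquad
\Lambda''(\lambda)=\frac{2(1-(1+\lambda)e^{-\lambda})}{(1-e^{-\lambda})^2}.
\]
Since $e^\lambda>1+\lambda$ for $\lambda\neq0$, the numerator of $\Lambda''$ is positive, so $\Lambda$ is smooth and strictly convex (convexity also follows from $\Lambda$ being a pointwise limit of the convex functions $\lambda\mapsto\epsilon_r\Lambda_r(\epsilon_r^{-1}\lambda)$). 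Moreover $\Lambda(0)=\Lambda'(0)=0$, $\Lambda'(\lambda)\to-2$ as $\lambda\to-\infty$, and $\Lambda'(\lambda)\to+\infty$ as $\lambda\to+\infty$, so $\Lambda':\mathbb R\to(-2,\infty)$ is an increasing bijection. Hence: for $x<-2$ we have $\lambda x-\Lambda(\lambda)=\lambda(x+2)+O(1)\to+\infty$ as $\lambda\to-\infty$, so $\Lambda^*(x)=+\infty$; for $x=-2$ the expression equals $2\Li_2(1-e^\lambda)$, which increases to $2\Li_2(1)=\pi^2/3$ as $\lambda\to-\infty$, so $\Lambda^*(-2)=\pi^2/3$; and for $x>-2$ the supremum is attained at the unique critical point $\lambda(x)$ solving $\Lambda'(\lambda)=x$, giving
\[
\Lambda^*(x)=\lambda(x)\,x-\Lambda(\lambda(x))=\lambda(x)(x+2)+2\Li_2(1-e^{\lambda(x)})=h(\lambda(x),x).
\]

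It remains to solve $\Lambda'(\lambda)=x$ explicitly. Setting $a:=\tfrac{x+2}{2}$, the equation $\frac{\lambda e^\lambda}{e^\lambda-1}=a$ becomes, after clearing the denominator, $(\lambda-a)e^{\lambda-a}=-ae^{-a}$, whence $\lambda-a=W(-ae^{-a})$ and therefore $\lambda=\tfrac{x+2}{2}+W(-\tfrac{x+2}{2}e^{-(x+2)/2})$, which is exactly the expression for $\lambda_j(x)$ in the statement. The branch is pinned down as follows: clearing the denominator introduced the spurious root $\lambda=0$ (where $e^\lambda-1=0$), corresponding to $W(-ae^{-a})=-a$, and the value $-a$ lies on the principal branch $W_0$ exactly when $a\in(0,1)$, i.e. $-2<x<0$, and on the lower branch $W_{-1}$ exactly when $a>1$, i.e. $x>0$. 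Consequently the genuine critical point is $\lambda(x)=\lambda_{-1}(x)=a+W_{-1}(-ae^{-a})$ for $-2<x<0$ (then $\lambda(x)<0$, as it must be since $x<\Lambda'(0)=0$), and $\lambda(x)=\lambda_0(x)=a+W_0(-ae^{-a})$ for $x>0$ (then $\lambda(x)>0$); at $x=0$ (so $a=1$) both branches meet at $W(-e^{-1})=-1$ and $\lambda(0)=0$. Substituting into $h$ yields the asserted formula, and continuity at $x=0$ and the one-sided limit at $x=-2$ (where $\lambda_{-1}(x)\to-\infty$ and $h(\lambda_{-1}(x),x)\to\pi^2/3$) are checked directly.

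I expect the branch bookkeeping for $W$ to be the one delicate point: one must recognize and discard the spurious root $\lambda=0$ produced by clearing the denominator, and use the sign of $x$ relative to $\Lambda'(0)=0$ together with the monotonicity of $\Lambda'$ to decide which of $W_0,W_{-1}$ carries the true critical point on each side of $x=0$; everything else is routine once the range and monotonicity of $\Lambda'$ are in hand. Finally, the comparison $\Lambda^*(t)\le\Lambda^*(-t)$ for $t>0$ used in the proof of \Cref{Theorem: babycase} follows from this analysis: $\Lambda^*$ is convex with $\Lambda^*(0)=0$, hence increasing on $[0,\infty)$ and decreasing on $(-\infty,0]$, while the Landen identity $\Li_2(1-e^\lambda)+\Li_2(1-e^{-\lambda})=-\tfrac12\lambda^2$ gives $\Lambda(\lambda)+\Lambda(-\lambda)=\lambda^2$, so that $\Lambda(-\mu)=\mu^2-\Lambda(\mu)\le\Lambda(\mu)$ on $[0,\infty)$ (using $\Lambda''\ge1$ there), and therefore $\Lambda^*(-t)\ge\sup_{\mu\ge0}\{\mu t-\Lambda(-\mu)\}\ge\sup_{\mu\ge0}\{\mu t-\Lambda(\mu)\}=\Lambda^*(t)$.
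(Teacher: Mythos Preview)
Your proof is correct and follows essentially the same route as the paper's: compute $\Lambda'$, show it is a bijection from $\mathbb{R}$ onto $(-2,\infty)$, solve $\Lambda'(\lambda)=x$ via the substitution $(\lambda-a)e^{\lambda-a}=-ae^{-a}$ with $a=\tfrac{x+2}{2}$, and select the correct Lambert branch from the sign of the critical $\lambda$. Your branch bookkeeping is in fact a bit cleaner than the paper's---you explicitly recognize $\lambda=0$ as the spurious root introduced by clearing the denominator and use this to rule out one branch, whereas the paper argues more informally that ``$W$ has to be less than $-1$''; and your Landen-identity argument for $\Lambda^*(t)\le\Lambda^*(-t)$ (with the auxiliary inequality $\Lambda''\ge 1$ on $[0,\infty)$) is an addition not spelled out in the paper, which relies instead on the picture of $\Lambda^*$ and its convexity.
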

\begin{proof}
	Define $h\left(\lambda\right)=\lambda\left(x+2\right)+2\Li_2\left(1-e^\lambda\right).$ It holds that
	\begin{equation*}
	\lim_{\lambda\rightarrow+\infty}h\left(\lambda\right)=\lim_{\lambda\rightarrow+\infty}\lambda\left(x+2\right)-\frac{\pi^2}{3}-\log^2\left(e^\lambda-1\right)=-\infty,
	\end{equation*} 
	\begin{equation*}
	\lim_{\lambda\rightarrow-\infty}h\left(\lambda\right)=\lim_{\lambda\rightarrow-\infty}\lambda\left(x+2\right)+2\Li_2\left(1\right)=
	\begin{cases}
	2\Li_2\left(1\right)=\frac{\pi^2}{3} & x=-2,\\
	+\infty & x<-2,\\
	-\infty & x>-2.\\
	\end{cases}
	\end{equation*}
	Consider the equation,
	\begin{equation}
	ye^y=-\frac{1}{2}{\left(x+2\right)}e^{-\tfrac{1}{2}{\left(x+2\right)}}, y=\lambda-\frac{1}{2}{\left(x+2\right)}.
	\end{equation}
	Hence,
	\begin{equation*}\label{star}
	y=W\left(-\frac{1}{2}\left(x+2\right)e^{-\tfrac{1}{2}{\left(x+2\right)}}\right)\implies \lambda=\frac{1}{2}\left(x+2\right)+W\left(-\frac{1}{2}{\left(x+2\right)}e^{-\tfrac{1}{2}{\left(x+2\right)}}\right).
	\end{equation*}
	
	$W_0\left(0\right)=0, W_0\left(-\frac{1}{e}\right)=-1$ and $W_{-1}$ decreases from $W_{-1}(-\frac{1}{e})=-1$ to \mbox{$W_{-1}\left(0^{-}\right)=-\infty$.} 
	Furthermore, it holds that:
	\begin{equation*}
	\lim_{x\rightarrow\infty}\frac{W_0\left(x\right)}{\log\left(x\right)}=1 \ , \ \lim_{x\rightarrow0^{-}}\frac{W_{-1}\left(x\right)}{\log\left(-x\right)}=1.
	\end{equation*}
	
	Observe that since $h'$ is monotone decreasing, when $-2<x<0$, $\lambda$ has to be negative hence by \eqref{star} and the fact that $\frac{1}{2}{\left(x+2\right)}\leq1$, $W$ has to be less than $-1$ so in this case we'll choose the branch $W_{-1}$. By the same argument when $x\geq0$ we'll choose $W_0$. Let us denote the solution according to the branches $\lambda_{0/-1}(x)$ and overall we get-
	\begin{equation*}
	\Lambda^*\left(x\right)=\begin{cases}
	+\infty & x<-2,\\
	\frac{\pi^2}{3} & x=-2,\\
	h\left(\lambda_{-1}\left(x\right),x\right) & -2<x<0,\\
	h\left(\lambda_0\left(x\right),x\right) & x\geq0.\\
	\end{cases}
	\end{equation*}  
	It holds that
	\begin{align*}
	\lim_{x\downarrow-2}\Lambda^*\left(x\right)
	=\lim_{x\downarrow-2} \ \left(x+2\right)\left(\frac{1}{2}\left(x+2\right)+W_{-1}\left(-\frac{1}{2}\left(x+2\right)e^{-\frac{1}{2}{\left(x+2\right)}}\right)\right) &\\+2\Li_2\left(1-\exp\left(\frac{1}{2}\left(x+2\right)+W_{-1}\left(-\frac{1}{2}\left(x+2\right)e^{-\frac{1}{2}\left(x+2\right)}\right)\right)\right)
	& =\frac{\pi^2}{3},
	\end{align*}
	and 
	\begin{align*}
	\lim_{x\rightarrow+\infty}\Lambda^*\left(x\right)
	=\lim_{x\rightarrow+\infty} \ \left(x+2\right)\left(\frac{1}{2}{\left(x+2\right)}+W_{0}\left(-\frac{1}{2}{\left(x+2\right)}e^{-\tfrac{1}{2}{\left(x+2\right)}}\right)\right)&\\+2\Li_2\left(1-\exp\left({\frac{1}{2}{\left(x+2\right)}+W_{0}\left(-\frac{1}{2}{\left(x+2\right)}e^{-\frac{1}{2}{\left(x+2\right)}}\right)}\right)\right)
	& =+\infty,
	\end{align*}
	where the asymptotic of the functions $W_0,W_{-1},\Li_2$ is used for both limits.
	Note that $\Lambda^*\left(0\right)=0$ and that the function $\Lambda^*$ is non-negative in a neighborhood of $0$ hence by convexity, $0$ is the minimum of this function (see \Cref{fig: rate}), completing the proof of \Cref{lemma: alpha=1} and of \Cref{Theorem: babycase}.
	
\end{proof}
\section{Measurability of $\beta^*$}\label{measurabiliy}
Recall that $\beta^*:\Omega\rightarrow[0,2\pi)$ is such that 
\begin{equation*}
\frac{1}{N}\sum_{j=1}^{N}\log\left|f\left(r_0e^{i\theta_j}\cdot e^{i\beta^*}\right)\right|=\int_{r_0\mathbb{T}}\log\left|f\right|d\mu  
\end{equation*} 
when $f\neq0$ otherwise we set $\beta^*=0$.  
We understand our probability space $\Omega$ as follows. Consider the complex plane $\mathbb{C}$ with $\mathbb{C}_\nu$ the Borel sigma-algebra of $\mathbb{C}$. Then, we take the countable product measure space $\mathbb{C}^{\infty}_\nu$, with the sigma algebra generated by cylinder sets, i.e. $\mathbb{C}^{j-1}\times U_j\times \mathbb{C}^{n-j}$  where
$U_j\in\mathcal{B}\left(\mathbb{C}\right)$, $n\geq1$.     
Denote by $\text{A}\left(\mathbb{D}\right)$ - the space of analytic functions defined on the unit disk. Endow this space with the topology of uniform convergence on compact sets. This makes it a complete separable metric space. 
Now, $\beta^{*}:\Omega\mapsto[0,2\pi)$ can be viewed as a composition of two measurable functions.  
The map $\omega=\{\omega_n\}_{n=1}^{\infty}\mapsto f\left(z,\omega\right)=\sum_{n=1}^{\infty}\xi_n\left(\omega\right)z^{n}$ where $\xi_n\left(\omega\right)=\omega_n$ is measurable as a limit of measurable functions (projections are continuous).
Then, we need to prove that the map  $\beta^{*}:A\left(\mathbb{D}\right)\mapsto[0,2\pi]$ defined momentarily is measurable. 

Consider all $\beta$'s such that
\begin{equation*}
\frac{1}{N}\sum_{j=1}^{N}\log\left|f\left(r_0e^{i\theta_j}\cdot e^{i\beta}\right)\right|=\int_{r_0\mathbb{T}}\log\left|f\right|d\mu=:c_f
\end{equation*} 
(such $\beta$ exists as we've already mentioned by continuity, \Cref*{estimate:avergae}). We define 
$\beta^*\left(f\right):\text{A}\left(\mathbb{D}\right)\rightarrow\left[0,2\pi\right]$  as follows: 
\begin{equation*}
\beta^{*}\left(f\right)=\inf\{\beta\in[0,2\pi) \ : \ 	\frac{1}{N}\sum_{j=1}^{N}\log\left|f\left(r_0e^{i\theta_j}\cdot e^{i\beta}\right)\right|=c_f\}.
\end{equation*}
To see that $\beta^*$ is lower semi-continuous (and hence measurable), we want to show that for all $a\in[0,2\pi)$, $\{f\in\text{A}\left(\mathbb{D}\right) \ : \ \beta^{*}\left(f\right)>a\}\in \mathcal{B}_{\text{A}\left(r\bar{\mathbb{D}}\right)}$
where $\mathcal{B}_{\text{A}\left(r\bar{\mathbb{D}}\right)}$ is the Borel sigma-algebra with respect to the sup metric.
(since $r<1$ is fixed, we can restrict our domain to $r\mathbb{D}$.)
Indeed, 

\begin{align*}
\{f\in\text{A}&\left(\mathbb{D}\right) \ : \ \beta^{*}\left(f\right)>a\} \\ &=\bigg\{f\in\text{A}\left(\mathbb{D}\right) \ : \ \forall \beta\in\left[0,a\right] \ , \  \frac{1}{N}\sum_{j=1}^{N}\log\left|f\left(r_0e^{i\theta_j}\cdot e^{i\beta}\right)\right|\neq c_f\bigg\}\\ &=\bigcup_n \, \bigg\{f\in\text{A}\left(\mathbb{D}\right) \ : \ \forall \beta\in\left[0,a\right] \ , \  \Big|\frac{1}{N}\sum_{j=1}^{N}\log\big|f\left(r_0e^{i\theta_j}\cdot e^{i\beta}\right)\big|-c_f\Big|>\frac{1}{n}\bigg\}
\end{align*}

So if $\sup_{r\bar{\mathbb{D}}}\left|f-g\right|<\delta$ then (by continuity) the integrals are sufficiently close to each other $\left|c_f-c_g\right|<\epsilon$ hence this set is open.

\end{document}